\numberwithin{equation}{section}
\setlist[enumerate]{label=\textup{(\roman*)}}
\tikzset{
    labl/.style={anchor=south, rotate=90, inner sep=.5mm} 
}
\tikzset{
  symbol/.style={
    draw=none,
    every to/.append style={
      edge node={node [sloped, allow upside down, auto=false]{$#1$}}}
  }
}
\theoremstyle{plain}
\newtheorem*{theorem*}{Theorem}
\newtheorem{theorem}{Theorem}[section]
\newtheorem{proposition}[theorem]{Proposition}
\newtheorem{lemma}[theorem]{Lemma}
\theoremstyle{definition}
\newtheorem{definition}[theorem]{Definition}
\theoremstyle{remark}
\newtheorem{remark}[theorem]{Remark}
\newtheorem{remarks}[theorem]{Remarks}
\newcommand{\CC}{\mathbb{C}}
\newcommand{\PP}{\mathbb{P}}
\newcommand{\QQ}{\mathbb{Q}}
\newcommand{\ZZ}{\mathbb{Z}}
\newcommand{\scrE}{\mathscr{E}}
\newcommand{\scrH}{\mathscr{H}}
\newcommand{\scrL}{\mathscr{L}}
\newcommand{\scrO}{\mathscr{O}}
\newcommand{\scrU}{\mathscr{U}}
\newcommand{\frh}{\mathfrak{h}}
\def\lowsim{\vbox to 0pt{\vss\hbox{$\scriptstyle\sim$}\vskip-2.5pt}}
\newcommand{\isomarrow}{\xrightarrow{\lowsim}}
\newcommand{\isomlongarrow}{\xrightarrow{\;\lowsim\;}}
\let\tto=\longrightarrow
\DeclareMathOperator{\A}{\mathfrak{A}}
\DeclareMathOperator{\ab}{ab}
\DeclareMathOperator{\alg}{alg}
\DeclareMathOperator{\Betti}{B}
\DeclareMathOperator{\CGr}{\mathsf{CGr}}
\DeclareMathOperator{\charact}{char}
\DeclareMathOperator{\CH}{CH}
\DeclareMathOperator{\ch}{ch}
\DeclareMathOperator{\class}{cl}
\DeclareMathOperator{\Db}{\mathrm{D}^\mathrm{b}}
\DeclareMathOperator{\Gal}{Gal}
\DeclareMathOperator{\GL}{GL}
\DeclareMathOperator{\Grass}{\mathsf{Gr}}
\DeclareMathOperator{\Griff}{\mathsf{Griff}}
\DeclareMathOperator{\Hom}{Hom}
\DeclareMathOperator{\Hilb}{Hilb}
\DeclareMathOperator{\id}{id}
\DeclareMathOperator{\Ker}{Ker}
\DeclareMathOperator{\Ku}{Ku}
\DeclareMathOperator{\nr}{nr}
\DeclareMathOperator{\num}{num}
\DeclareMathOperator{\Pic}{Pic}
\DeclareMathOperator{\pr}{pr}
\DeclareMathOperator{\pt}{pt}
\DeclareMathOperator{\Spec}{Spec}
\DeclareMathOperator{\Sym}{Sym}
\DeclareMathOperator{\topo}{top}
\DeclareMathOperator{\td}{td}
\DeclareMathOperator{\tr}{tr}
\newcommand{\motH}{\mathbf{H}}
\newcommand{\unitmot}{\bm{1}}
\newcommand{\CHM}{\mathsf{CHM}}
\newcommand{\Mot}{\mathsf{Mot}}
\newcommand{\Fbar}{\overline{F}}
\newcommand{\Kbar}{\overline{K}}
\newcommand{\pprime}{{\prime\prime}}
\title{Algebraic cycles on Gushel--Mukai varieties}
\author{Lie Fu}
\address{Universit\'e de Strasbourg, IRMA, Strasbourg, France}
\email{lie.fu@math.unistra.fr}
\author{Ben Moonen}
\address{Radboud University Nijmegen, IMAPP, Nijmegen, The Netherlands}
\email{b.moonen@science.ru.nl}
\begin{document}



\maketitle

\begin{prelims}

\DisplayAbstractInEnglish

\bigskip

\DisplayKeyWords

\medskip

\DisplayMSCclass







\end{prelims}


\newpage

\setcounter{tocdepth}{1}

\tableofcontents


\section{Introduction}

\subsection{}
The main goal of this paper and its companion~\cite{FuMoonen-TCGMV} is to prove several results about algebraic cycles on Gushel--Mukai (GM) varieties. The focus of the present paper is on GM varieties in characteristic~$0$. Several results that we obtain here are used in~\cite{FuMoonen-TCGMV}, in which we prove the Tate conjecture for even-dimensional GM varieties in characteristic $p\geq 5$.

GM varieties form a class of Fano varieties that admit a simple explicit definition and that are interesting because of their rich geometry and their connections to hyperk\"ahler varieties. We refer to the series of papers by Debarre and Kuznetsov (see \cite{DK-GMClassification, DK-Kyoto, DK-DoubleCov, DK-GMJacobian}) for an in-depth study. A nice starting point is Debarre's overview paper~\cite{Debarre-GM}. 

We work over an algebraically closed field~$K$ of characteristic~$0$. For the purposes of this paper, only smooth GM varieties of dimension greater than~$2$ are of interest. These exist in dimensions $n \in \{3,4,5,6\}$; they can be realised as intersections
\[
X = \CGr(2,V_5) \cap \PP(W) \cap Q
\]
of the cone $\CGr(2,V_5) \subset \PP(K \oplus \wedge^2 V_5)$ over the Grassmannian of $2$-planes in a $5$-dimensional vector space~$V_5$ with a linear subspace $\PP(W) \subset \PP(K \oplus \wedge^2 V_5)$ of dimension~$n+4$ and a quadric~$Q$.

\subsection{}
The cohomology of an $n$-dimensional GM variety~$X$ is purely of Tate type, except in middle degree. If $n$ is odd, $H^n(X)$ corresponds to a $10$-dimensional abelian variety, the intermediate Jacobian. If $n$ is even, $H^n(X)$ is of K3 type, with Hodge numbers $1$-$22$-$1$. A first main theme in this paper is that  almost all Chow groups also have a relatively simple structure. 

In Section~\ref{sec:ChowGroupGM} we compute all Chow groups of complex GM varieties with \emph{integral} coefficients, except for the two infinite-dimensional cases, namely $1$-cycles on GM fourfolds and $2$-cycles on GM sixfolds. For GM fivefolds, the result is due to Zhou in her thesis~\cite{ZhouLin}. In dimensions $3$ and~$4$, the results are easily deduced from the results of Bloch and Srinivas, see~\cite{BlochSrinivas}, and Laterveer, see~\cite{Laterveer-SmallChow}. Similar results with $\QQ$-coefficients were obtained by Laterveer using different methods.

Our main new contribution concerns $1$-cycles and $3$-cycles on GM sixfolds~$X$. We prove that the Fano variety of lines on~$X$ is rationally chain connected, so that all lines on~$X$ have the same class in~$\CH^5(X)$. By a nice geometric argument using ``successive ruled surfaces'' as in the work~\cite{TianZong} of Tian and Zong, we show that every $1$-cycle on~$X$ is equivalent to an integral multiple of the class of a line, so that we obtain $\CH^5(X) \cong \ZZ$. 

For cycles of codimension~$3$ on a GM sixfold, we show that $\CH^3(X)_{\hom} = 0$. The main step is to show that the Griffiths group~$\Griff^3(X)$ is zero. As the Hodge conjecture with $\QQ$-coefficients for~$X$ is true (see below), the image of $\CH^3(X)$ in cohomology is of finite index in the space of Hodge classes in $H^6(X,\ZZ)\left(3\right)$.

\subsection{}
A second main result of the paper concerns the (generalised) Hodge conjecture and the Tate conjecture, and, as a bridge between them, the Mumford--Tate conjecture. (See Section~\ref{sec:MTC+GTC} for the formulation of these conjectures.) The result we obtain is the following.

\begin{theorem*}
Let $X$ be a complex GM variety. 
\begin{enumerate}
\item The generalised Hodge conjecture for $X$ is true.

\item If\, $\dim(X)$ is even, the Mumford--Tate conjecture for $X$ is true.

\item If\, $\dim(X)$ is even, the generalised Tate conjecture for $X$ is true.
\end{enumerate}  
\end{theorem*}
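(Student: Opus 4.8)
The plan is to reduce everything to the structure of $H^n(X)$ in middle degree, since outside middle degree the cohomology is of Tate type and all three conjectures are classical there (the Tate-type classes are spanned by powers of the hyperplane class and, in the even-dimensional case, by the restriction of Schubert classes from the Grassmannian). For part~(i), the Generalised Hodge Conjecture, I would invoke the general principle that GHC for a smooth projective variety~$X$ follows once one knows the coniveau filtration on $H^*(X,\QQ)$ is computed by a sub-Hodge structure coming from a closed subvariety, together with the Hodge conjecture in the relevant range. When $n$ is odd, $H^n(X)(\tfrac{n-1}{2})$ has Hodge coniveau $\tfrac{n-1}{2}$ and corresponds to the intermediate Jacobian, a $10$-dimensional abelian variety; one uses that this Hodge structure is effectively supported on a curve or surface (via the Abel--Jacobi construction and the results of Debarre--Kuznetsov on the GM Jacobian), so GHC reduces to the Lefschetz $(1,1)$-theorem on that variety. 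When $n$ is even, $H^n(X)$ is of K3 type with a single $(p,q)=(\tfrac n2+1,\tfrac n2-1)$ piece of Hodge coniveau $\tfrac n2-1$, and the GHC amounts to finding a surface supporting the transcendental part; here I would use the established correspondence (generalised partner/dual, cf.\ the last sentence of the abstract) relating $H^n(X)$ to the primitive $H^2$ of a K3 surface or to $H^4$ of a GM fourfold, reducing to the K3 or fourfold case.

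For part~(ii), the Mumford--Tate Conjecture in the even-dimensional case, the point is that $H^n(X,\QQ)$ decomposes as (Tate classes) $\oplus$ (a K3-type piece $T$), and the MTC for~$X$ reduces to the MTC for the motive governing~$T$. For a Hodge structure of K3 type the Mumford--Tate group is either an orthogonal group or (in the CM case) a torus, and in both situations the MTC is known: the main input is Tankeev's and André's results on the MTC for K3-type motives, or alternatively one transports the statement to an actual K3 surface or hyperkähler variety via the partner/dual correspondence and cites the MTC for those (Tankeev, André, or for hyperkählers the work building on André's theorem). One must check that the ``motivated'' version lines up — i.e.\ that the correspondences realising the partner/dual relation are motivated — which is exactly what the rational-motive isomorphism advertised in the abstract provides.

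Part~(iii), the Generalised Tate Conjecture in the even-dimensional case, I would deduce from (i) and (ii) together with the (ordinary) Tate Conjecture for~$X$. Concretely: MTC (part ii) identifies the image of the Galois representation on $H^n$ with the $\ell$-adic Mumford--Tate group up to finite index, so the Tate classes and the Hodge classes cut out the same sub-Hodge-structure / sub-Galois-representation; the coniveau filtrations on the Betti and étale realisations then agree, and GHC (part i) transports to the étale side because the subvarieties supporting the coniveau filtration are defined over a finitely generated field. The ordinary Tate conjecture for $H^n(X)$ itself follows from the K3-type structure: either via the Kuga--Satake construction and Tate's theorem for abelian varieties (in char $0$ this is due to André for K3-type motives), or again by reduction to a genuine K3 / hyperkähler partner.

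\textbf{Main obstacle.} The crux in all three parts is controlling the middle-degree Hodge structure well enough to find an explicit algebraic subvariety supporting it, and to know that the comparisons between $X$ and its partners/duals are realised by \emph{algebraic} (or at least motivated) correspondences rather than merely Hodge-theoretic isometries — otherwise one has an isomorphism of Hodge structures but no grip on cycles. I expect the even-dimensional case to hinge on the analysis of $H^n(X)$ via the Kuznetsov component and the relation to K3-type motives, and the odd-dimensional GHC to hinge on a sufficiently precise description of the Abel--Jacobi map onto the $10$-dimensional intermediate Jacobian; these are the steps where the hard work lies, the rest being formal consequences of MTC plus the Hodge and Tate conjectures in the classical (K3 / abelian variety) setting.
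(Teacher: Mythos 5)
Your part~(iii) is essentially the paper's argument (the Generalised Tate Conjecture is deduced formally from the GHC, the Tate Conjecture and a Frobenius-eigenvalue/Chebotarev argument), but parts (i) and~(ii) as you set them up have genuine gaps. For the even-dimensional GHC you propose to transport the statement to a K3 surface or a GM fourfold via the generalised partner/dual relation. This does not work: a general even-dimensional GM variety has no Hodge-theoretically associated K3 surface, and even when a lower-dimensional partner exists, what you need is an \emph{algebraic} correspondence realising the isomorphism of Hodge structures --- which is exactly the hard point you flag at the end. The motive isomorphism for partners/duals (Theorem~\ref{thm:MotivePartnerChar0}) only relates two GM varieties (never a GM variety and a K3 surface), and its proof already uses the Chow-group computations of Section~\ref{sec:ChowGroupGM}, so invoking it here would be circular. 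The paper's actual route is entirely different and Chow-theoretic: one shows $\CH_0(X)$ is supported on a point and, for sixfolds, that $\CH_1(X)_\QQ\cong\QQ$ generated by a line (Proposition~\ref{prop:CH1GM6BoundedTorsion}, via rational chain connectedness of $F_1(X)$ and the ``successive ruled surfaces'' argument), and then applies Laterveer's decomposition-of-the-diagonal criteria \cite{Laterveer-SmallChow} (together with \cite{Laterveer-GM5}, \cite{Nagel-GHC}, \cite{ZhouLin} in dimension~$5$). Remark~\ref{rem:HCGM6alt} records explicitly that the ``explicit geometric correspondence'' approach via the double EPW sextic, which is close in spirit to what you suggest, could not be made to work for all GM sixfolds.

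For part~(ii), the Mumford--Tate Conjecture is not a known statement for ``abstract Hodge structures of K3 type'': it concerns a specific Galois representation attached to a specific variety, and knowing only that $H^n(X,\QQ)$ is of K3 type is not enough. The input that makes Andr\'e's theorem \cite{Andre-ShafTate} applicable --- and which your sketch omits --- is the geometric condition $\mathrm{B}_i$: $X$ must sit in a connected polarised family whose period map has image containing an open subset of the period domain. This is verified for GM varieties using the Debarre--Kuznetsov description of the period map, and condition $\mathrm{B}_i^+\text{(iv)}$ (the Hodge conjecture for the fibres) is supplied by part~(i). Your alternative of transporting the MTC from an actual K3 or hyperk\"ahler partner via a motivated correspondence runs into the same existence and algebraicity problems as above. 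So the overall architecture (reduce everything to the middle-degree K3-type piece, then make (iii) formal) is right, but the two load-bearing steps --- the Chow-group computations behind the GHC and the period-map condition behind Andr\'e's theorem --- are missing from the proposal.
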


Most cases of the generalised Hodge conjecture (GHC) are in fact covered by work of Laterveer, and the remaining case of GM sixfolds is easily deduced from our calculations of Chow groups. 

Our proof of the Mumford--Tate conjecture (MTC) is based on the work of Andr\'e~\cite{Andre-ShafTate}. It is crucial here that the middle cohomology of~$X$ is of K3 type, which is why we have to assume that $\dim(X)$ is even. (For GM varieties of odd dimension, the Mumford--Tate conjecture is not known; in this case it is a problem about $10$-dimensional abelian varieties, which is a much more difficult case to handle.) The convenience of Andr\'e's results is that we only need to verify a couple of conditions, the most important of which is that $X$ should appear as a fibre in a smooth family whose image under the period map contains an open subset of the appropriate period domain. This is known to be true for GM varieties by the work of Debarre and Kuznetsov. 

The generalised Tate conjecture, finally, is a formal consequence of the GHC and the MTC.

The above theorem is an important ingredient for our proof of the Tate conjecture for GM varieties in characteristic $p\geq 5$ in~\cite{FuMoonen-TCGMV}.

\subsection{}
In the final section of the paper, we turn to Chow motives of GM varieties, and we prove a result about so-called generalised partners or generalised duals. A central theme in the study of GM varieties is that a lot of important information about them can be encoded in terms of multilinear algebra data. In particular, to a GM variety over an algebraically closed field~$K$ (with $\charact(K)=0$), one can associate a ``Lagrangian data set'', which is a triple $(V_6,V_5,A)$ consisting of a $6$-dimensional $K$-vector space~$V_6$, a hyperplane $V_5 \subset V_6$, and a Lagrangian subspace $A \subset \wedge^3 V_6$ with respect to the natural symplectic form $\wedge^3 V_6 \times \wedge^3 V_6 \to \det(V_6)$. GM varieties $X$ and~$X^\prime$ whose dimensions have the same parity are said to be generalised partners (resp.\ generalised duals) if there exists an isomorphism $f\colon V_6(X) \isomarrow V_6(X^\prime)$ such that the induced isomorphism $\wedge^3 V_6(X) \isomarrow \wedge^3 V_6(X^\prime)$ sends $A(X)$ to~$A(X^\prime)$ (resp.\ an isomorphism $f\colon V_6(X) \isomarrow V_6(X^\prime)^\vee$ with $(\wedge^3 f)\left(A(X)\right) = A(X^\prime)^\perp$). We prove that the Chow motives in middle degree of such generalised partners or duals are isomorphic. 

\begin{theorem*}
Let $X$ and~$X^\prime$ be GM varieties of dimensions $n$ and~$n^\prime$ over a field $K=\Kbar$ of characteristic~$0$ which are generalised partners or generalised duals. Then there is an isomorphism of rational Chow motives
\[
\frh^n(X) \cong \frh^{n^\prime}(X^\prime)\left(\tfrac{n^\prime-n}{2}\right).
\]
\end{theorem*}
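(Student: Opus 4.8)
The plan is to build the isomorphism $\frh^n(X) \cong \frh^{n'}(X')(\tfrac{n'-n}{2})$ out of an explicit correspondence coming from the common geometry that generalised partners and duals share. The key structural fact (due to Debarre--Kuznetsov) is that the Lagrangian data set $(V_6, V_5, A)$ determines, besides $X$ itself, an associated \emph{Eisenbud--Popescu--Walter (EPW) sextic} $Y_A \subset \PP(V_6)$ and its dual $Y_{A^\perp} \subset \PP(V_6^\vee)$, together with their canonical double covers $\widetilde{Y}_A$, $\widetilde{Y}_{A^\perp}$; moreover the interesting part of the middle cohomology of $X$ — the K3-type piece if $n$ is even, the abelian-variety piece if $n$ is odd — depends only on $A$ (up to the duality $A \leftrightarrow A^\perp$) and not on the chosen hyperplane $V_5$ or on the parity-preserving dimension shift. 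So the first step is to reduce, via this dictionary, to comparing the \emph{transcendental} (or primitive non-Tate) parts of the motives: writing $\frh^n(X) = \motH^n(X) \oplus (\text{Tate summands})$ where $\motH^n(X)$ carries the K3-type resp.\ abelian part, one checks the Tate summands match after the twist by a trivial count of Hodge/Chow classes (using the integral Chow group computations of Section~\ref{sec:ChowGroupGM}), and isolates the claim $\motH^n(X) \cong \motH^{n'}(X')(\tfrac{n'-n}{2})$ as the real content.

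Next I would construct the correspondence. The cleanest route is to use the \emph{projective duality / quadric fibration} description: a GM variety of dimension $n$ sits inside a GM variety of dimension $n+2$ (with the same $A$) as a hyperplane-type section, giving Lefschetz-style maps between their motives, so one may reduce to a single pair of dimensions in each parity class — say realise every even-dimensional $X$ as cut out inside the GM sixfold attached to $A$, and every odd-dimensional one inside the GM fivefold. This already handles the "generalised partner" case for varieties of the \emph{same} dimension once we know $\motH^n$ depends only on $A$: the isomorphism $f\colon V_6(X) \isomarrow V_6(X')$ carrying $A(X)$ to $A(X')$ induces an isomorphism of the two Lagrangian data sets up to the hyperplane, hence (by functoriality of the EPW/double-cover construction and of the motivic decomposition in the family) an isomorphism of the distinguished motives. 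For "generalised duals" one instead invokes the known geometric fact that $X$ and a dual GM variety $X'$ share their EPW double cover — $\widetilde{Y}_A \cong \widetilde{Y}_{(A^\perp)^\perp}$ — and that the K3-type (resp.\ abelian) motive of a GM variety is cut by an idempotent correspondence out of $\frh(\widetilde{Y}_A)$; chasing this through gives $\motH^n(X) \cong \motH^{n'}(X')$ up to the Tate twist recording the codimension difference.

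Concretely, the technical heart is: (a) show that the "interesting" direct summand of $\frh^n(X)$ is isomorphic, as a rational Chow motive, to a fixed summand $\motM(A)$ of the motive of the EPW double cover $\widetilde{Y}_A$ (for $n$ even, of K3 type; for $n$ odd, of abelian type), with the isomorphism realised by an \emph{algebraic} correspondence; and (b) check that this $\motM(A)$ is invariant — up to isomorphism and up to the twist $(\tfrac{n'-n}{2})$ — under the operations $V_5 \rightsquigarrow V_5'$ (varying the hyperplane, i.e.\ passing between partners of possibly different dimension) and $A \rightsquigarrow A^\perp$ (passing to duals). Step (a) is where I expect the main obstacle: one needs an actual cycle inducing the motivic isomorphism, not merely a cohomological one, so the argument must leverage something like the Bloch--Srinivas decomposition of the diagonal (available here because the Chow groups are small — exactly the content of Section~\ref{sec:ChowGroupGM}), or an explicit incidence correspondence between $X$ and $\widetilde{Y}_A$ coming from the Debarre--Kuznetsov construction, together with the fact (also proved in the paper) that $\Griff^3$ of a GM sixfold vanishes, to upgrade cohomological identities to rational-equivalence identities. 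Once a correspondence inducing an isomorphism on cohomology is in hand and the relevant motives are finite-dimensional (Kimura--O'Sullivan) with controlled realisations, a standard Jannsen-type argument promotes it to an isomorphism of motives; the remaining bookkeeping — matching Tate summands and reading off the twist $(\tfrac{n'-n}{2})$ from codimension count — is routine.
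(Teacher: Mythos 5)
Your overall architecture (isolate the transcendental summand, find an algebraic correspondence inducing the known cohomological isomorphism, upgrade to a motivic isomorphism) is sensible in outline, but it is not the paper's route, and the step you yourself flag as the ``technical heart'' is a genuine gap rather than a surmountable obstacle. The paper does not go through the EPW double cover at all: the algebraic correspondence on $X \times X'$ is produced by the Kuznetsov--Perry equivalence $\Ku(X) \simeq \Ku(X')$ of Kuznetsov components (Theorem~\ref{thm:DerCatPartnerChar0}), which is of Fourier--Mukai type, so that the Mukai vector of the kernel supplies the cycle. One then checks, using the invariant $\A = \Ker\bigl(K_0 \to K_0^{\topo}\bigr)$ together with the Chow-group computations of Section~\ref{sec:ChowGroupGM}, that this correspondence induces an isomorphism on $\CH^*(-)_{\hom,\QQ}$, and concludes by Manin's principle (Lemma~\ref{lem:ManinPrinc}). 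Your proposal has no substitute for this derived-categorical input.

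Concretely, two of your steps fail. First, your step (a) --- realising the interesting summand of $\frh^n(X)$ as a summand of $\frh(\widetilde{Y}_A)$ via an algebraic correspondence --- is a Hodge-conjecture-type assertion about the product, and it is exactly what Remark~\ref{rem:HCGM6alt} of the paper discusses: even the weaker, purely cohomological statement (algebraicity of the isomorphism $H^2(\widetilde{Y}_A,\QQ)_0 \cong H^6(X,\QQ)_{00}$) is established there only for \emph{general} $X$, and the general case is obstructed by the possible singularity of $\widetilde{Y}_A$ and the unknown degree-$2$ Hodge conjecture for it. Bloch--Srinivas and $\Griff^3(X)=0$ control cycles on $X$ itself; they cannot manufacture a Hodge class on $X\times \widetilde{Y}_A$ or $X\times X'$. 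Second, the ``finite-dimensionality plus Jannsen'' device for promoting a cohomological isomorphism to a motivic one is unavailable in the even-dimensional case: as the paper records in the remark following Theorem~\ref{thm:MTCTC}, it is not known whether the Chow motive of an even-dimensional GM variety is abelian, hence not known to be finite-dimensional --- which is precisely why the paper instead verifies the isomorphism on all Chow groups and invokes Lemma~\ref{lem:ManinPrinc}. (Two smaller inaccuracies: for odd $n$ the intermediate Jacobian is governed by a different EPW-type variety than the double EPW sextic, and a GM $n$-fold with given Lagrangian data arises inside a GM variety of dimension $n+1$, not $n+2$, so your proposed dimensional reduction leaves the parity class.)
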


This theorem, too, is used in an essential way in~\cite{FuMoonen-TCGMV}. The proof relies on a result of Kuznetsov and Perry, see~\cite{KuznetsovPerry-CatCone}, which says that in this situation the Kuznetsov components of~$X$ and~$X^\prime$ are equivalent.

\subsection{}
We are certainly not the first to study algebraic cycles on Gushel--Mukai varieties, and part of our work here is a refinement or completion of work done by other people. In particular, let us note that during the preparation of this paper (together with~\cite{FuMoonen-TCGMV}), a preprint~\cite{BolognesiLaterveer-GM6} by Bolognesi and Laterveer was posted that has some overlap with part of the work presented here.

It is a pleasure for us to dedicate this paper to Claire Voisin, who has contributed so much to complex geometry, Hodge theory, and the study of algebraic cycles.

\subsection{Notation.}
\label{subsec:NotConv}
For a variety~$X$, we write $\CH^i(X)$ for the Chow group in codimension~$i$ with integral coefficients, and $\CH^i(X)_\QQ = \CH^i(X) \otimes \QQ$. If $X$ is of pure dimension $n$, then $\CH_i(X):=\CH^{n-i}(X)$. We denote by $\CH(X)_{\alg} \subset \CH(X)_{\hom} \subset \CH(X)$ the subgroups of classes that are algebraically (resp.\ homologically) trivial.

If $X$ is a complete non-singular complex algebraic variety and $i$ is a natural number, we denote by $J^{2i+1}(X)$ the intermediate Jacobian in degree~$2i+1$.

\subsection*{Acknowledgment.} We thank the referee for their pertinent comments and careful reading. 

\section{Generalities on Gushel--Mukai varieties}

Throughout this section, $K$ denotes an algebraically closed field of characteristic~$0$.

\subsection{}
\label{subsec:GMIntro}
If $V_5$ is a $5$-dimensional $K$-vector space, let $\Grass(2,V_5) \subset \PP(\wedge^2 V_5)$ be the Grassmannian variety of $2$-planes in~$V_5$, in its Pl\"ucker embedding.

A \emph{Gushel--Mukai} (\emph{GM}) \emph{variety} of dimension $n \in \{3,4,5,6\}$ over~$K$ is a non-singular projective variety~$X$ with $\dim(X)=n$ that can be realised as a scheme-theoretic intersection
\[
X = \CGr(2,V_5) \cap \PP(W) \cap Q,
\]
where $V_5$ is a $5$-dimensional $K$-vector space, $\CGr(2,V_5) \subset \PP(K \oplus \wedge^2 V_5)$ is the cone over the Grassmannian, $W \subset K \oplus \wedge^2 V_5$ is a linear subspace of dimension~$n+5$, and $Q \subset \PP(K \oplus \wedge^2 V_5)$ is a quadric. We refer to the series of papers by Debarre and Kuznetsov (notably \cite{DK-GMClassification,DK-Kyoto,DK-GMJacobian}) for an in-depth study of such varieties and for references to the contributions by many other people. A good starting point is the overview paper~\cite{Debarre-GM}. We will follow the notation of these papers; here we only record some basic facts that we need later.

Because $X$ is non-singular, it does not contain the vertex~$O$ of $\CGr(2,V_5)$, and we have a morphism $\gamma \colon X \to \Grass(2,V_5)$, which is called the Gushel map. The corresponding rank~$2$ subbundle of $V_5 \otimes \scrO_X$ is denoted by~$\scrU_X$ and is called the Gushel bundle on~$X$.

GM varieties come in two flavours: with notation as above, we say that $X$ is
\begin{itemize}
	
\item of \emph{Mukai type} if $\gamma$ is a closed embedding, which happens if and only $O \notin \PP(W)$ (in this case, projection from~$O$ gives a realisation of~$X$ as an intersection of the Grassmannian $\Grass(2,V_5)\subset \PP(\wedge^2V_5)$ with a linear subspace and a quadric); 
	
\item of \emph{Gushel type} if $O \in \PP(W)$, in which case $\gamma$ is a double cover of its image, which is an $n$-dimensional linear section of $\Grass(2,V_5)$, denoted by~$M$. The branch divisor of this double cover $X \to M$ is an $(n-1)$-dimensional GM variety of Mukai type contained in~$M$.
\end{itemize}
In other papers these two types are referred to as ``ordinary'' and ``special'', respectively. As our work in the paper~\cite{FuMoonen-TCGMV} concerns GM varieties in characteristic~$p>0$ and the term ``ordinary'' has a well-established (different) meaning in algebraic geometry in characteristic~$p>0$, we prefer to use the above terminology.

Note that GM varieties of Gushel type with $\dim(X) < 6$ are specialisations of GM varieties of Mukai type and that all GM sixfolds are of Gushel type (as in this case necessarily $W = K \oplus \wedge^2 V_5$).

\subsection{}
It is shown in~\cite{DK-GMClassification} that to a GM variety~$X$, one can canonically associate a so-called GM data set $(V_6,V_5,W,L,q,\mu,\epsilon)$ and a corresponding Lagrangian data set $(V_6,V_5,A)$. We refer to \cite[Sections~2.2 and~3.2]{DK-GMClassification} (especially Theorems~2.9 and~3.10) for details. We only recall that $V_6$ is a $6$-dimensional $K$-vector space, $V_5 \subset V_6$ is a $5$-dimensional subspace, and $A$ is a Lagrangian subspace of~$\wedge^3 V_6$ for the natural $\det(V_6)$-valued symplectic form on~$\wedge^3 V_6$. 

The vector spaces $V_6$, $V_5$, $A$, and~$W$ can be constructed from~$X$ in a functorial way. If the context requires it, we write $V_6(X)$, $V_5(X)$, etc.

\subsection{}
Being Fano varieties, GM varieties are rationally connected, by the well-known results of Campana, see~\cite{Campana}, and Koll\'ar--Miyaoka--Mori, see~\cite{KMM}. Further, it is known that for every GM variety~$X/K$ (non-singular, as always, and in characteristic~$0$ by convention), we have $\Pic_{X/K} \cong \ZZ$; \textit{cf.} \cite[Lemma~2.29]{DK-GMClassification}. (This last result is also true in characteristic~$p>0$; see~\cite{FuMoonen-TCGMV}.)

\section{Generalities on Chow motives}

We recall an important result due to Bloch and Srinivas, see~\cite{BlochSrinivas}, which is the basis of many results about Chow motives.

\begin{theorem}[Bloch--Srinivas]
\label{thm:Bloch-Srinivas}
Let $X$ be a complete non-singular complex algebraic variety. Assume that the Chow group of\, $0$-cycles $\CH_0(X)$ is supported on an $r$-dimensional closed algebraic subset of\,~$X$.
\begin{enumerate}
\item\label{BS-1} If $r\leq 3$, then the Hodge conjecture for $H^4(X,\QQ)$ is true.
		
\item\label{BS-2} If $r\leq 2$, then the Griffiths group $\Griff^2(X)$ is zero,  and also $H^{3,0}(X)=0$; hence, the intermediate Jacobian~$J^3(X)$ is an abelian variety. 
		
\item\label{BS-3} If $r\leq 1$, then the group $\CH^2(X)_{\alg}$ of algebraically trivial cycles of codimension~$2$ is weakly representable by an abelian subvariety $J^3_{\mathrm{a}}(X)$ of the intermediate Jacobian~$J^3(X)$.
\end{enumerate}
\end{theorem}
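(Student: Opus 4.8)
\emph{The method: decomposition of the diagonal.} Write $n=\dim(X)$. The engine of the proof is to produce, with rational coefficients, an integer $N\geq 1$ and a decomposition
\[
N\cdot\Delta_X \;=\; \Gamma_1+\Gamma_2\qquad\text{in }\CH^n(X\times X)_\QQ,
\]
where $\Gamma_1$ is supported on $D\times X$ for some divisor $D\subset X$ and $\Gamma_2$ is supported on $X\times Y$. To obtain it, one views the restriction of $\Delta_X$ to the generic fibre of $\pr_1$ as the class of the diagonal point in $\CH_0\!\bigl(X_{K(X)}\bigr)$; the hypothesis that $\CH_0(X)$ be supported on $Y$ survives base change to $\Kbar(X)$, so rationally this class is supported on $Y$, and spreading the resulting rational equivalence out over a dense open $U\subseteq X$ shows that $\Delta_X-\Gamma_2$ (for a suitable cycle $\Gamma_2$ on $X\times Y$) restricts to $0$ on $U\times X$, hence is supported on $(X\setminus U)\times X\subseteq D\times X$. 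Choosing smooth projective models $\widetilde D$, $\widetilde Y$ with maps $\widetilde\jmath\colon\widetilde D\to X$, $\widetilde\imath\colon\widetilde Y\to X$, I write $\Gamma_1=(\widetilde\jmath\times\id)_*W_1$ and $\Gamma_2=(\id\times\widetilde\imath)_*W_2$ with $W_1\in\CH(\widetilde D\times X)$, $W_2\in\CH(X\times\widetilde Y)$; I shall use both this decomposition and its transpose $N\cdot\Delta_X={}^t\Gamma_1+{}^t\Gamma_2$ (valid since $\Delta_X$ is symmetric), in which ${}^t\Gamma_1=(\id\times\widetilde\jmath)_*({}^tW_1)$ is supported on $X\times D$ and ${}^t\Gamma_2=(\widetilde\imath\times\id)_*({}^tW_2)$ on $Y\times X$.

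\emph{Proof of (i).} Let ${}^t\Gamma_1$ and ${}^t\Gamma_2$ act on $H^4(X,\QQ)$, so that $N\cdot\id=({}^t\Gamma_1)_*+({}^t\Gamma_2)_*$. As ${}^t\Gamma_2$ is supported on $Y\times X$, the operator $({}^t\Gamma_2)_*$ factors as $H^4(X,\QQ)\xrightarrow{\widetilde\imath^{\,*}}H^4(\widetilde Y,\QQ)\to H^4(X,\QQ)$, and since $\dim\widetilde Y=r\leq 3$ every rational Hodge class in $H^4(\widetilde Y)$ is algebraic (Lefschetz $(1,1)$ together with hard Lefschetz/Poincar\'e duality on a smooth projective variety of dimension $\leq 3$); hence $({}^t\Gamma_2)_*$ carries Hodge classes to algebraic ones. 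As ${}^t\Gamma_1$ is supported on $X\times D$, a dimension count shows that, viewed as a correspondence from $X$ to $\widetilde D$, it lowers cohomological degree by $2$, so $({}^t\Gamma_1)_*$ maps $H^4(X,\QQ)$ into $\widetilde\jmath_*H^2(\widetilde D,\QQ)$; a rational Hodge class in $H^2(\widetilde D)$ is algebraic by Lefschetz $(1,1)$, and its image under the Gysin map $\widetilde\jmath_*$ is again algebraic. Thus, for a Hodge class $\beta\in H^4(X,\QQ)$, both summands of $N\beta=({}^t\Gamma_1)_*\beta+({}^t\Gamma_2)_*\beta$ are algebraic, whence so is $\beta$.

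\emph{Proof of (ii) and (iii).} The same dichotomy applies to $\CH^2(X)_{\hom}\otimes\QQ$: for $z$ therein, $({}^t\Gamma_1)_*z=\widetilde\jmath_*\!\bigl(({}^tW_1)_*z\bigr)$ with $({}^tW_1)_*z\in\CH^1(\widetilde D)_{\hom,\QQ}=\Pic^0(\widetilde D)\otimes\QQ=\CH^1(\widetilde D)_{\alg,\QQ}$ (the codimension drops by one), so this is algebraically trivial; and $({}^t\Gamma_2)_*z=({}^tW_2)_*\!\bigl(\widetilde\imath^{\,*}z\bigr)$ with $\widetilde\imath^{\,*}z\in\CH^2(\widetilde Y)_{\hom,\QQ}$, which for $r\leq 2$ is either $0$ or $\CH_0(\widetilde Y)_{\hom,\QQ}=\CH_0(\widetilde Y)_{\alg,\QQ}$ (homologically trivial $0$-cycles on a smooth projective variety are algebraically trivial), so again algebraically trivial. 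Hence $Nz\in\CH^2(X)_{\alg,\QQ}$, i.e.\ $\Griff^2(X)\otimes\QQ=0$, which is~(\ref{BS-2}). If moreover $r\leq 1$ then $\CH^2(\widetilde Y)=0$, so the ${}^t\Gamma_2$-term vanishes and $Nz=\widetilde\jmath_*\!\bigl(({}^tW_1)_*z\bigr)$ for every $z\in\CH^2(X)_{\alg,\QQ}$; thus $\CH^2(X)_{\alg}$ is, rationally, a quotient of the abelian variety $\Pic^0(\widetilde D)$ via the regular homomorphism $\widetilde\jmath_*$. Feeding this into Murre's theory of the algebraic representative $\mathrm{Ab}^2(X)$ of codimension-$2$ cycles, and using the Abel--Jacobi-compatible factorisation $\CH^2(X)_{\alg}\to\mathrm{Ab}^2(X)\to J^3(X)$, one identifies $\mathrm{Ab}^2(X)$ with an abelian subvariety $J^3_{\mathrm a}(X)\subseteq J^3(X)$ such that $\AJ$ induces an isomorphism $\CH^2(X)_{\alg}\isomarrow J^3_{\mathrm a}(X)$ --- the asserted weak representability in~(\ref{BS-3}).

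\emph{Where the difficulty lies.} Setting up the decomposition of the diagonal via spreading out is routine but must be done carefully, and it only ever yields $\QQ$-coefficients (so strictly speaking $N$ annihilates $\Griff^2(X)$, and the vanishing of its torsion, if wanted integrally, needs a separate input). The genuinely delicate point is the last step of~(\ref{BS-3}): upgrading ``$\CH^2(X)_{\alg}$ is rationally dominated by $\Pic^0(\widetilde D)$'' to an honest isomorphism with an abelian subvariety of $J^3(X)$ on which the Abel--Jacobi map is injective. This requires the structure theory of codimension-two cycles --- Bloch's formula $\CH^2\cong H^2(\mathcal K_2)$ and the Gersten--Quillen resolution --- on top of the decomposition of the diagonal. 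By contrast, the corresponding nuisance in (\ref{BS-1}) and (\ref{BS-2}), the ``divisor piece'' $\Gamma_1$, is dealt with cleanly by passing to the transpose, which lowers the relevant (co)dimension by one and so reduces it to the Lefschetz $(1,1)$ theorem and the representability of the Picard variety.
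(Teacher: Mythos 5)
The paper does not actually prove this theorem; it cites Bloch--Srinivas for (\ref{BS-1})--(\ref{BS-2}) and Murre for (\ref{BS-3}), so there is no in-house argument to compare against. Your reconstruction is the standard decomposition-of-the-diagonal proof and it checks out: the spreading-out of the rational equivalence over the generic point of the first factor, the use of the transposed decomposition so that the ``divisor piece'' lands in $\widetilde\jmath_*H^2(\widetilde D,\QQ)$ (resp.\ $\widetilde\jmath_*\Pic^0(\widetilde D)_\QQ$), the degree counts on $\widetilde Y$, and the remark that correspondences are morphisms of Hodge structures (so the classes arriving in $H^2(\widetilde D)$ and $H^4(\widetilde Y)$ are again Hodge classes, hence algebraic by Lefschetz $(1,1)$ and hard Lefschetz) are all correct. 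Delegating the endgame of (\ref{BS-3}) to Murre's theory of the algebraic representative is exactly what the paper itself does.

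The one point where the proposal falls short of the statement as printed is (\ref{BS-2}): the theorem asserts $\Griff^2(X)=0$ \emph{integrally}, whereas the diagonal argument only yields $N\cdot\Griff^2(X)=0$. You flag this honestly, but the missing input is not optional for the paper, which uses the integral vanishing (e.g.\ to get $\CH^2(X)_{\hom}=\CH^2(X)_{\alg}$ with $\ZZ$-coefficients for GM threefolds, fourfolds and sixfolds). The extra ingredient in Bloch--Srinivas is that the ($N$-)torsion of $\Griff^2(X)$ vanishes, which rests on the Merkurjev--Suslin theorem via Bloch's description of torsion codimension-two cycles --- the same circle of ideas you invoke only for (\ref{BS-3}). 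With that reference added, the proof is complete.
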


Note that the assertions on $H^{3,0}(X)$ and $J^3(X)$ in~\ref{BS-2} are not in \cite{BlochSrinivas} but can be easily deduced; this is known as the generalised Mumford theorem; see for example \cite[Theorem~3.13]{Voisin-BookDecompDiagonal}.
Assertion~\ref{BS-3} is not explicit in~\cite{BlochSrinivas}; see however \cite[Theorem~C]{MurreKTheory}. The weak representability means, in particular, that we have an isomorphism  of groups $\CH^2(X)_{\alg} \isomarrow J^3_{\mathrm{a}}(X)(\CC)$.

\subsection{}
For a field~$k$, we denote by $\CHM(k)$ the category of Chow motives over~$k$ (with rational coefficients). We use the contravariant (cohomological) notion of motives, so that the functor sending a smooth projective variety~$X$ over~$k$ to its Chow motive~$\frh(X)$ is contravariant.

Let $\unitmot(1)$ denote the Tate motive and, as usual, write $\unitmot(m) = \unitmot(1)^{\otimes m}$. (So $\unitmot(-1)$ is the Lefschetz motive.) The Chow groups (with rational coefficients) of a motive~$M$ over~$k$ are defined by the rule $\CH^i(M)_\QQ = \Hom_{\CHM(k)}\left(\unitmot(-i),M\right)$, and $\CH(M)_\QQ = \bigoplus_{i\in \ZZ}\; \CH^i(M)_\QQ$. With this notation we have the following very useful lemma.

\begin{lemma}
\label{lem:ManinPrinc}
Let $\Omega$ be an algebraically closed field which is of infinite transcendence degree over its prime field. Let $f \colon M \to N$ be a morphism in~$\CHM(\Omega)$ such that the induced homomorphism $\CH(M)_\QQ \to \CH(N)_\QQ$ is an isomorphism. Then $f$ is an isomorphism.
\end{lemma}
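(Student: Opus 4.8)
The statement is the classical "Manin identity principle" phenomenon: a morphism of Chow motives inducing an isomorphism on Chow groups over a large enough algebraically closed field is itself an isomorphism. The strategy is to reduce to showing that $f$ has both a left and a right inverse, and for this it suffices (by duality and symmetry) to produce a right inverse, i.e.\ a morphism $g\colon N \to M$ with $f \circ g = \id_N$. The key point is that $\Hom_{\CHM(\Omega)}(N,M)$ is itself a Chow group: if $N = (Y,q,m)$ and $M = (X,p,n)$ with $X$ of pure dimension $d_X$, then $\Hom(N,M) = q\,\CH^{d_X + n - m}(X\times Y)_\QQ\, p$, a direct summand of a Chow group of the smooth projective $\Omega$-variety $X\times Y$. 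So the problem is to find a specific element in this group mapping to the identity correspondence.

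First I would use the hypothesis that $\CH(M)_\QQ \to \CH(N)_\QQ$ is an isomorphism. The subtlety is that I need to know this not just for Chow groups over $\Omega$ itself, but after base change along field extensions, or equivalently for the "Chow groups with coefficients in a variety": for any smooth projective $T$ over $\Omega$, composition with $f$ should induce an isomorphism $\Hom(\frh(T), M) \to \Hom(\frh(T), N)$, and more generally $\Hom(P, M)\to\Hom(P,N)$ for every Chow motive $P$. The bridge between "$\CH(M)_\QQ\to\CH(N)_\QQ$ is an isomorphism" and "this holds with coefficients in every $T$" is exactly where the hypothesis on $\Omega$ (algebraically closed, infinite transcendence degree over the prime field) enters: a cycle on $X\times T$ is defined over a subfield finitely generated over the prime field, hence — after choosing an embedding of the function field of (a model of) $T$ into $\Omega$, which is possible precisely because $\Omega$ has infinite transcendence degree — can be specialised to a cycle on $X_\Omega$, and this specialisation is compatible with the action of $f$. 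Running this argument carefully shows that $f_*\colon \CH(X\times T)_\QQ\to\CH(Y\times T)_\QQ$-type maps are isomorphisms for all $T$, i.e.\ $f$ induces an isomorphism $\Hom(P,M)\isomarrow\Hom(P,N)$ for all $P\in\CHM(\Omega)$.

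Granting that, the conclusion is formal Yoneda: taking $P = N$, the isomorphism $\Hom(N,M)\isomarrow\Hom(N,N)$ produces a unique $g\colon N\to M$ with $f\circ g = \id_N$; taking $P = M$, the isomorphism $\Hom(M,M)\isomarrow\Hom(M,N)$ together with $f\circ(g\circ f) = (f\circ g)\circ f = f = f\circ\id_M$ forces $g\circ f = \id_M$. Hence $f$ is an isomorphism with inverse $g$.

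**Main obstacle.** The genuinely non-formal step is the specialisation argument in the second paragraph — transferring the hypothesis on $\CH$-groups over $\Omega$ to the statement "$f$ induces an isomorphism on $\Hom(P,-)$ for every motive $P$". This rests on the standard spreading-out and specialisation of algebraic cycles (Fulton, \cite{BlochSrinivas}), and on the combinatorial fact that any cycle involved is defined over a subfield of finite transcendence degree, which can be re-embedded into $\Omega$; the hypothesis that $\Omega$ has infinite transcendence degree over its prime field is precisely what guarantees enough room for such embeddings. Everything after that is soft category theory (idempotent completion of the category of correspondences, Yoneda), so I expect the write-up to be short once the specialisation lemma is invoked.
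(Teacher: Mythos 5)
Your argument is correct and is essentially the same as the paper's, which simply cites \cite[Lemma~3.2]{Vial-Remarks} for the specialisation step (upgrading the isomorphism on Chow groups over~$\Omega$ to one with coefficients in every variety, using that $\Omega$ is a universal domain) and \cite[Lemma~1.1]{Huybrechts-MotiveK3} for the formal Yoneda/identity-principle step. Your two main paragraphs unpack exactly these two references.
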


\begin{proof}
This follows from \cite[Lemma~1.1]{Huybrechts-MotiveK3} together with \cite[Lemma~3.2]{Vial-Remarks}.
\end{proof}

\subsection{}
\label{subsec:CKDec}
Let $X/k$ be a smooth projective $k$-scheme of dimension~$d$. A decomposition
\[
[\Delta_X] = \sum_{i=0}^{2d}\; \pi_X^i
\]
in $\CH^d(X\times X)_\QQ$ is said to be a \emph{Chow--K\"unneth decomposition} if the~$\pi_X^i$ (viewed as correspondences from~$X$ to itself) are mutually orthogonal projectors and (for a Weil cohomology theory~$H$) the endomorphism of~$H(X)$ induced by~$\pi_X^i$ is the projection onto the summand~$H^i(X)$. Such a decomposition can also be viewed as a direct sum decomposition
\[
\frh(X) = \bigoplus_{i=0}^{2d}\; \frh^i(X)
\]
(with $\frh^i(X) = (X,\pi_X^i,0)$) such that $H\left(\frh^i(X)\right) = H^i(X)$. We shall use both points of view interchangeably. Note that the projector~$\pi_X^i$ can be viewed both as a morphism $\frh^i(X) \to \frh(X)$ and as a morphism $\frh(X) \to \frh^i(X)$. A Chow--K\"unneth decomposition as above is said to be \emph{self-dual} if ${}^{\mathsf{t}}\pi_X^i = \pi_X^{2d-i}$ for all~$i$.

The diagonal morphism $\Delta_X \colon X \to X\times X$ gives a morphism $\Delta_X^* \colon \frh(X) \otimes \frh(X) \to \frh(X)$. A Chow--K\"unneth decomposition is said to be \emph{multiplicative} if for all indices~$i$ and~$j$ the composition
\[
\frh^i(X) \otimes \frh^j(X) \tto \frh(X) \otimes \frh(X) \xrightarrow{~\Delta_X^*~} \frh(X)
\]
factors through $\frh^{i+j}(X)$. See \cite[Lemma~1.6]{FLV-MCK} for other formulations of this property. (Multiplicativity in fact implies that the decomposition is self-dual; see \cite[Proposition~1.7]{FLV-MCK}.)

\subsection{}
The subcategory of abelian motives $\CHM(k)^{\ab} \subset \CHM(k)$ is defined as the smallest full replete rigid tensor subcategory that is closed under taking direct summands and contains all motives of $1$-dimensional smooth proper $k$-schemes. (``Replete'' means that if a motive is isomorphic to an abelian motive, it is itself an abelian motive.)

A Chow motive~$M$ is said to be finite-dimensional (in the sense of Kimura--O'Sullivan) if there exist a decomposition $M = M^+ \oplus M^-$ and a positive integer~$n$ such that $\wedge^n M^+ = \Sym^n M^- = 0$. It is known that all abelian motives are finite-dimensional; see~\cite{Kimura} or~\cite[Section~2]{Andre-MotDimFin}.

We will use the following result; see \cite[Corollaire~3.16]{Andre-MotDimFin}.

\begin{theorem}
\label{thm:Conservative}
The natural functor $\CHM(k)^{\ab} \to \Mot_{\num}(k)$ from the category of abelian Chow motives to the category of numerical motives is conservative $($i.e.~it detects isomorphisms\,$)$.
\end{theorem}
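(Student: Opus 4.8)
The plan is to reduce conservativity to two ingredients: that every object of $\CHM(k)^{\ab}$ is finite dimensional in the sense of Kimura--O'Sullivan (already recorded above), together with a nilpotence property of numerically trivial endomorphisms. Recall that for motives $M,N$ the morphism group in $\Mot_{\num}(k)$ is by definition the quotient of $\Hom_{\CHM(k)}(M,N)$ by the radical of the trace pairing $(f,g)\mapsto \tr(g\circ f)$; in particular the functor $\CHM(k)\to\Mot_{\num}(k)$ is full, and a morphism $h$ dies in $\Mot_{\num}(k)$ if and only if $\tr(g\circ h)=0$ for all $g$. The key input I would isolate is: \emph{every numerically trivial endomorphism of an object of $\CHM(k)^{\ab}$ is nilpotent.} Granting this, conservativity is a formal consequence of finite dimensionality.

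Indeed, let $f\colon M\to N$ be a morphism in $\CHM(k)^{\ab}$ whose image $\bar f$ in $\Mot_{\num}(k)$ is an isomorphism, with inverse $\bar g\colon\bar N\to\bar M$. By fullness, lift $\bar g$ to some $g\in\Hom_{\CHM(k)}(N,M)$. Then $g\circ f-\id_M$ maps to $\bar g\bar f-\id=0$, so it is numerically trivial and hence, by the nilpotence statement applied to the finite-dimensional $M$, nilpotent; therefore $g\circ f=\id_M+(\text{nilpotent})$ is invertible in $\End(M)$, so $f$ has a left inverse. Since $N$ is abelian as well, the same argument shows $f\circ g=\id_N+(\text{nilpotent})$ is invertible in $\End(N)$, so $f$ has a right inverse. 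A morphism possessing both a left and a right inverse is an isomorphism, and we conclude that $f$ is an isomorphism in $\CHM(k)^{\ab}$.

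It remains to establish the nilpotence statement, which is the crux. If $h\in\End(M)$ is numerically trivial, then $\tr(h^k)=\tr(h^{k-1}\circ h)=0$ for every $k\ge1$, so it suffices to prove the purely categorical fact that, in a $\QQ$-linear rigid pseudo-abelian tensor category, a finite-dimensional object $M$ and an endomorphism $h$ with $\tr(h^k)=0$ for all $k\ge1$ force $h$ to be nilpotent. Writing $M=M^+\oplus M^-$ with $\wedge^n M^+=\Sym^n M^-=0$, one exploits the exterior- and symmetric-power functors: the vanishing $\wedge^n M^+=0$ yields a Cayley--Hamilton relation of degree $<n$ for the even component, with coefficients the characteristic functions $\tr(\wedge^i h)$, and dually $\Sym^n M^-=0$ governs the odd component via $\tr(\Sym^i h)$. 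By Newton's identities these coefficients are universal polynomials in the power sums $\tr(h^k)$, which all vanish by hypothesis, so each characteristic polynomial reduces to a power of $t$ and the corresponding component of $h$ is nilpotent. The genuine difficulty---and the step I expect to be the main obstacle---is that $h$ need not preserve the splitting $M=M^+\oplus M^-$, so the even and odd arguments must be carried out simultaneously; handling this mixing is precisely the content of the Kimura--O'Sullivan nilpotence theorem, for which I would refer to \cite{Kimura} and \cite{Andre-MotDimFin}.
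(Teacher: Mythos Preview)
The paper does not give its own proof of this statement: it simply cites \cite[Corollaire~3.16]{Andre-MotDimFin}. Your outline is correct and is precisely the argument underlying Andr\'e's result, namely finite dimensionality of abelian motives combined with the Kimura--O'Sullivan nilpotence theorem (the kernel of $\End_{\CHM}(M)\to\End_{\num}(M)$ is a nilpotent ideal for finite-dimensional~$M$), followed by the formal observation that an identity plus a nilpotent is invertible. Your honest flagging of the even/odd mixing as the nontrivial point, deferred to \cite{Kimura} and~\cite{Andre-MotDimFin}, is appropriate; there is nothing to add relative to what the paper does, since the paper does less.
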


We will also use the following result of Vial; see \cite[Theorem~4]{Vial-Projectors}. For an abelian variety~$J$, we define $\frh_1(J) = \frh^1(J)^\vee$, where $\frh(J) = \oplus_{i=0}^{2g}\; \frh^i(J)$ is the Deninger--Murre decomposition of the Chow motive of~$J$ as in \cite[Theorem~3.1]{DeningerMurre}.

\begin{theorem}[Vial]
\label{thm:Vial-MotDecomp}
Let X be a non-singular complex projective variety of dimension~$d$. Assume that the Abel--Jacobi map $\CH^i(X)_{\QQ,\hom}\to J^{2i-1}(X)\otimes \QQ$ is injective for all~$i$. Then $J^{2i-1}(X)$ is an abelian variety, and there is a Chow--K\"unneth decomposition $\frh(X) = \oplus_{i=0}^{2d}\; \frh^i(X)$ with
\[
\frh^{2i}(X) \cong H^{2i}(X,\QQ) \otimes \unitmot(-i),\qquad
\frh^{2i-1}(X) \cong \frh_1\left(J^{2i-1}(X)\right)(-i).
\]
In particular, $\frh(X)$ is an abelian motive.
\end{theorem}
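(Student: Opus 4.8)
The plan is to turn the analytic/cohomological information supplied by the Abel--Jacobi maps into an honest decomposition of the Chow motive, and to control the passage from cohomology to rational equivalence by means of the Manin-type principle of Lemma~\ref{lem:ManinPrinc} (which applies since $\CC$ has infinite transcendence degree over~$\QQ$). Concretely, I would assemble a candidate motive $M=\bigoplus_{i=0}^{2d} M^i$ with $M^{2i}=\unitmot(-i)^{\oplus b_{2i}}$ (where $b_{2i}=\dim_\QQ H^{2i}(X,\QQ)$) and $M^{2i-1}=\frh_1\bigl(J^{2i-1}(X)\bigr)(-i)$, build a graded morphism $\frh(X)\to M$, and prove it is an isomorphism by checking it induces an isomorphism on all rational Chow groups. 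The first task, and the main structural input, is to extract from the injectivity of $\AJ$ the two facts that make $M$ available at all: that each $H^{2i}(X,\QQ)$ is spanned by algebraic classes and is of Tate type, and that each $J^{2i-1}(X)$ is an abelian variety. Both come out of a Mumford--Roitman/Bloch--Srinivas-type analysis: injectivity of $\AJ$ on $\CH^i(X)_{\QQ,\hom}$ bounds the niveau of $H^{2i-1}(X)$, so that $H^{2i-1}(X)(i-1)$ is an effective weight-one Hodge structure and $J^{2i-1}(X)$ is isogenous to the Jacobian of a curve, hence an abelian variety; the same circle of ideas forces the Griffiths groups to vanish rationally and the even cohomology to be algebraic (indeed, a non-algebraic even class would feed, via the generalised Hodge/diagonal formalism, a transcendental contribution to a Chow group on which $\AJ$ could not stay injective).

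With these structural facts available, the even components are standard. Since $H^{2i}(X,\QQ)$ is algebraic and the intersection pairing between $H^{2i}$ and $H^{2d-2i}$ is perfect (Poincar\'e duality together with hard Lefschetz, both realised by algebraic classes), I can choose codimension-$i$ cycles whose classes form a basis of $H^{2i}(X,\QQ)$ and a dual family in complementary codimension. Suitable sums of exterior products of these cycles give correspondences $\pi^{2i}\in\CH^d(X\times X)_\QQ$ realising maps $\frh(X)\to M^{2i}=H^{2i}(X,\QQ)\otimes\unitmot(-i)$ that cut out the expected Tate summands on the nose.

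For the odd components, fix $i$ and set $J=J^{2i-1}(X)$, now known to be an abelian variety. By hypothesis the Abel--Jacobi map identifies $\CH^i(X)_{\QQ,\hom}$ with its image in $J(\CC)\otimes\QQ$. Using the universal property of $\AJ$ as a regular homomorphism (in the sense of Murre) together with the Deninger--Murre decomposition $\frh(J)=\bigoplus_j \frh^j(J)$ that defines $\frh_1(J)=\frh^1(J)^\vee$, I would manufacture correspondences between $X$ and $J$ realising a morphism $\frh(X)\to M^{2i-1}=\frh_1(J)(-i)$. The point is that $\frh_1(J)$ represents the functor of algebraically trivial $1$-cycles on~$J$, which the injective $\AJ$ matches precisely with $\CH^i(X)_{\QQ,\hom}$; this is what will make the induced map on Chow groups an isomorphism in the relevant degree.

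Finally I would assemble the component maps into $\phi\colon\frh(X)\to M$. By construction the classes $[\phi]$ realise, degree by degree, the K\"unneth projectors, so $\phi$ is a cohomological isomorphism respecting the grading. To upgrade this to $\CHM(\CC)$ I check that $\phi$ induces an isomorphism on each rational Chow group: the even summands account for the cycle-class (algebraic) part of $\CH^*(X)_\QQ$, while by the Abel--Jacobi identification the odd summands account exactly for the homologically trivial part. Lemma~\ref{lem:ManinPrinc} then promotes $\phi$ to an isomorphism, and transporting the evident grading of $M$ back along $\phi$ yields a Chow--K\"unneth decomposition $\frh(X)=\bigoplus_i\frh^i(X)$ with $\frh^{2i}(X)\cong H^{2i}(X,\QQ)\otimes\unitmot(-i)$ and $\frh^{2i-1}(X)\cong\frh_1\bigl(J^{2i-1}\bigr)(-i)$; since each $\frh^{2i}(X)$ is a Tate motive and each odd summand is built from an abelian variety, $\frh(X)$ lies in $\CHM(\CC)^{\ab}$. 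The main obstacle is the structural step of the first paragraph and the final lift from homological to rational equivalence: in both places the leverage is the injectivity of $\AJ$, which is exactly what converts assertions about correspondences into assertions about Chow groups that one can verify directly.
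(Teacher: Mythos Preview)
The paper does not prove this statement; it is quoted as a result of Vial with a pointer to \cite[Theorem~4]{Vial-Projectors}, so there is no in-paper argument to compare your proposal against.

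Your outline has a plausible shape, but the step you yourself identify as the main obstacle---extracting from injectivity of the Abel--Jacobi maps that the even cohomology is spanned by algebraic classes and that each $J^{2i-1}(X)$ is an abelian variety---is essentially the content of the theorem, and your justification (``a non-algebraic even class would feed \ldots\ a transcendental contribution to a Chow group on which $\AJ$ could not stay injective'') is not an argument but a hope. Vial's proof does not proceed via Lemma~\ref{lem:ManinPrinc}; rather, he constructs the projectors~$\pi^i$ directly, with controlled support, through a niveau-filtration and decomposition-of-the-diagonal analysis, and the identification of the summands follows from those support properties. A second gap in your route is the construction of the odd-degree morphisms: to obtain a map $\frh(X)\to\frh_1\bigl(J^{2i-1}(X)\bigr)(-i)$ of Chow motives you need an \emph{algebraic} correspondence on $X\times J^{2i-1}(X)$, and the universal property of~$\AJ$ as a regular homomorphism does not by itself hand you one with the required properties---producing such a correspondence (and one in the other direction) is precisely where the substantive work lies, and once that is done the appeal to Lemma~\ref{lem:ManinPrinc} becomes superfluous.
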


\section{Integral Chow groups of GM varieties}
\label{sec:ChowGroupGM}

In this section we collect the computation of all the representable Chow groups, with \emph{integral} coefficients, of complex GM varieties. The  results are essentially new only in dimension~$6$. 

\subsection*{Gushel--Mukai threefolds.}

The following result is essentially just an application of the Bloch--Srinivas theorem, Theorem~\ref{thm:Bloch-Srinivas}.

\begin{theorem}
\label{thm:ChowGroupGM3}
Let $X$ be a complex GM threefold, and let $J = J^3(X)$ be its intermediate Jacobian.
\begin{enumerate}
\item\label{thm:ChowGroupGM3-1} The cycle class map induces isomorphisms $\CH^i(X)\isomarrow \ZZ$ for $i=0$, $1$,~$3$.
		
\item\label{thm:ChowGroupGM3-2} For codimension $2$ cycles, algebraic equivalence and homological equivalence coincide, i.e.~$\Griff^2(X)=0$, and the Abel--Jacobi map $\CH^2(X)_{\alg} \to J(\CC)$ is an isomorphism.
		
\item\label{thm:ChowGroupGM3-3} We have $H^4(X,\ZZ) \cong \ZZ$, and this group is spanned by the cohomology class of a line on~$X$. We have a split short exact sequence
\[
0\tto J(\CC) \tto \CH^2(X) \xrightarrow{~\class~} \ZZ\tto 0.
\]
\end{enumerate}
\end{theorem}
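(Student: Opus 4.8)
The plan is to derive all four statements from the Bloch--Srinivas theorem~\ref{thm:Bloch-Srinivas}, applied in its optimal form (the case $r=0$). The key preliminary observation is that a GM threefold~$X$ is Fano, hence rationally connected by~\cite{Campana} and~\cite{KMM}; over $K=\overline{K}=\CC$ this forces $\CH_0(X)\cong\ZZ$, generated by the class of a point, so in particular $\CH_0(X)$ is supported on a single point. I will also use the known facts that $H^\bullet(X,\ZZ)$ is torsion-free and $b_2(X)=b_4(X)=1$, which give $H^2(X,\ZZ)\cong H^4(X,\ZZ)\cong\ZZ$, with $H^2(X,\ZZ)$ generated by the hyperplane class and the pairing $H^2\times H^4\to H^6\cong\ZZ$ perfect.

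First I would dispose of~\ref{thm:ChowGroupGM3-1}. The cases $i=0$ and $i=3$ are immediate: $\CH^0(X)\cong\ZZ\isomarrow H^0(X,\ZZ)$ tautologically, and $\CH^3(X)=\CH_0(X)\isomarrow\ZZ=H^6(X,\ZZ)$ is the degree map, an isomorphism by rational connectedness. For $i=1$ I would combine $\Pic_{X/K}\cong\ZZ$ (recalled in~\ref{subsec:GMIntro}) with Kodaira vanishing, which applies since $-K_X$ is ample and gives $H^1(X,\scrO_X)=H^2(X,\scrO_X)=0$; the exponential sequence then identifies $\CH^1(X)=\Pic(X)$ with $H^2(X,\ZZ)\cong\ZZ$ compatibly with the cycle class map.

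The substance is~\ref{thm:ChowGroupGM3-2} and~\ref{thm:ChowGroupGM3-3}. Since $r=0\leq 2$, Theorem~\ref{thm:Bloch-Srinivas}\ref{BS-2} already yields $\Griff^2(X)=0$, i.e.\ $\CH^2(X)_{\hom}=\CH^2(X)_{\alg}$; since $r=0\leq 1$, Theorem~\ref{thm:Bloch-Srinivas}\ref{BS-3} and the remark following it give an isomorphism $\CH^2(X)_{\alg}\isomarrow J^3_{\mathrm{a}}(X)(\CC)$, where $J^3_{\mathrm{a}}(X)$ is an abelian subvariety of $J=J^3(X)$. The remaining point is that $J^3_{\mathrm{a}}(X)=J$, equivalently that $\AJ\colon\CH^2(X)_{\alg}\to J(\CC)$ is surjective. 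For this I would return to the decomposition of the diagonal underlying Bloch--Srinivas, $[\Delta_X]=[Z_1]+[Z_2]$ in $\CH^3(X\times X)_\QQ$ with $Z_1$ supported on $\{x_0\}\times X$ and $Z_2$ on $X\times Y$ for a surface $Y\subset X$: the term $Z_1$ acts as zero on $H^3(X,\QQ)$ for degree reasons, so $\mathrm{id}_{H^3(X,\QQ)}$ factors, through algebraic correspondences, through $H^1(\widetilde{Y},\QQ)(-1)$ for a resolution $\widetilde{Y}\to Y$; since moreover $H^{3,0}(X)=H^0(X,\omega_X)=0$, this exhibits $H^3(X,\QQ)$ as a sub-Hodge structure of coniveau $\geq 1$ cut out by algebraic cycles, hence equal to $N^1H^3(X,\QQ)$. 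Transporting Abel--Jacobi maps along these correspondences and using that $\AJ\colon\Pic^0(\widetilde{Y})\isomarrow J^1(\widetilde{Y})(\CC)$ on a smooth projective surface, one concludes $\AJ$ is surjective on~$X$, so $J^3_{\mathrm{a}}(X)=J$; this gives~\ref{thm:ChowGroupGM3-2}. For~\ref{thm:ChowGroupGM3-3}, every GM threefold contains a line~$L$, and $\scrO_X(1)\cdot[L]=1$, so $[L]$ generates $H^4(X,\ZZ)\cong\ZZ$ by Poincar\'e duality; therefore the cycle class map $\CH^2(X)\to H^4(X,\ZZ)$ is surjective with kernel $\CH^2(X)_{\hom}=\CH^2(X)_{\alg}\cong J(\CC)$, which is the asserted short exact sequence, and it splits because $\ZZ$ is free (take $1\mapsto[L]$).

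I expect the step requiring the most care to be the identification $J^3_{\mathrm{a}}(X)=J$ — i.e.\ surjectivity of the Abel--Jacobi map — since this is where the vanishing $H^{3,0}(X)=0$ and the fine structure of the diagonal decomposition genuinely enter, as opposed to the merely formal parts. A secondary point deserving attention is to make sure the conclusions of Theorem~\ref{thm:Bloch-Srinivas} are invoked with integral coefficients and that the integral cohomology groups $H^2$, $H^3$, $H^4$ of~$X$ are pinned down correctly; the remainder is assembly.
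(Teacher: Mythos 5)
Your proof is correct and follows essentially the same route as the paper: parts (i) and (iii) are handled identically (Picard group $\cong\ZZ$, rational connectedness for $0$-cycles, torsion-freeness of $H^*(X,\ZZ)$ and $L\cdot H=1$ for the class of a line), and part (ii) rests on Bloch--Srinivas exactly as in the paper's proof. The only divergence is the surjectivity of the Abel--Jacobi map $\CH^2(X)_{\alg}\to J(\CC)$, which the paper obtains from rational connectedness via \cite[Theorem~12.22]{Voisin-Book1}, whereas you rederive it from the decomposition of the diagonal (showing $H^3(X,\QQ)$ has coniveau $\geq 1$ and transporting $\AJ$ along the resulting correspondence from a surface, using divisibility of $J(\CC)$ to remove the integer $N$); both arguments are standard and valid.
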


\begin{proof}
\ref{thm:ChowGroupGM3-1} For $i=0$ this is clear, for $i=1$ this is true because every complex GM variety has Picard group~$\ZZ$, and for $i=3$ this follows from the fact that $X$ is rationally connected.
	
\ref{thm:ChowGroupGM3-2} As $\CH_0(X)$ is supported on a point, this follows from the Bloch--Srinivas theorem (Theorem~\ref{thm:Bloch-Srinivas}), taking into account that $\CH^2(X)_{\alg} \to J(\CC)$ is surjective because $X$ is rationally connected. (See \cite[Theorem~12.22]{Voisin-Book1}.)
	
\ref{thm:ChowGroupGM3-3} By \cite[Proposition~3.4]{DK-Kyoto}, the groups $H^i(X,\ZZ)$ are torsion-free and $H^2(X,\ZZ) = \ZZ \cdot \class(H)$, where $H \subset X$ is a hyperplane section for the given embedding $X \subset \PP(\CC \oplus \wedge^2 V_5)$. Further, $H^4(X,\QQ)$ is $1$\nobreakdash-dimensional, and $X$ contains lines. As any line $L \subset X$ has $L \cdot H = 1$, it follows that $H^4(X,\ZZ) = \ZZ \cdot \class(L)$. The last assertion is then immediate from~\ref{thm:ChowGroupGM3-2}.
\end{proof}


\subsection*{Gushel--Mukai fourfolds}

\begin{proposition}
Let $X$ be a complex GM fourfold.
\begin{enumerate}
\item\label{thm:ChowGroupGM4-1} The cycle class map induces isomorphisms $\CH^i(X)\isomarrow \ZZ$ for $i=0$, $1$,~$4$.
		
\item\label{thm:ChowGroupGM4-2}  The cycle class map $\CH^2(X) \to H^4(X,\ZZ)\left(2\right)$ is injective, with image the subgroup of integral Hodge classes; hence, we obtain an isomorphism
\[
\CH^2(X)\isomlongarrow \left[H^4(X,\ZZ) \cap H^{2,2}(X,\CC)\right].
\]
\end{enumerate}
\end{proposition}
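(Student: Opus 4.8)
I would prove (i) by elementary means and obtain (ii) by combining the Bloch--Srinivas theorem with the known structure of the cohomology of GM fourfolds.

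\emph{Part~(i).} The case $i=0$ is trivial. For $i=1$ I would use $\Pic(X)\cong\ZZ$ (\cite[Lemma~2.29]{DK-GMClassification}) together with the Debarre--Kuznetsov description of the cohomology, by which $H^2(X,\ZZ)$ is torsion-free of rank~$1$; since the cohomology of~$X$ is of Tate type outside the middle degree~$4$, one has $H^1(\scrO_X)=H^2(\scrO_X)=0$, so the Lefschetz $(1,1)$ theorem identifies $\CH^1(X)=\Pic(X)$ with $H^2(X,\ZZ)\cong\ZZ$. For $i=4$ the variety~$X$ is Fano, hence rationally connected (\cite{Campana},~\cite{KMM}), so that $\CH^4(X)=\CH_0(X)\cong\ZZ$ via the degree map.

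\emph{Part~(ii), injectivity.} Rational connectedness means $\CH_0(X)$ is supported on a point, so Theorem~\ref{thm:Bloch-Srinivas} applies with $r=0$. By part~\ref{BS-2} we get $\Griff^2(X)=0$, hence $\CH^2(X)_{\hom}=\CH^2(X)_{\alg}$; by part~\ref{BS-3} the latter is isomorphic to $J^3_{\mathrm a}(X)(\CC)$ for some abelian subvariety $J^3_{\mathrm a}(X)\subseteq J^3(X)$. But $4$ is the middle degree of~$X$, so $H^3(X,\QQ)=0$, whence $J^3(X)=0$ and therefore $\CH^2(X)_{\hom}=0$. Since $\CH^2(X)_{\hom}$ is by definition the kernel of the cycle class map $\CH^2(X)\to H^4(X,\ZZ)\bigl(2\bigr)$, and $H^4(X,\ZZ)$ is torsion-free, this is exactly the injectivity asserted in~(ii).

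\emph{Part~(ii), the image.} The image is contained in the group $H^4(X,\ZZ)\cap H^{2,2}(X,\CC)$ of integral Hodge classes; the real point is the reverse inclusion, i.e.\ the integral Hodge conjecture for $H^4(X)$. Theorem~\ref{thm:Bloch-Srinivas}\,\ref{BS-1} gives the \emph{rational} Hodge conjecture for $H^4(X,\QQ)$, so $\class\bigl(\CH^2(X)\bigr)$ is of finite index in $H^4(X,\ZZ)\cap H^{2,2}(X,\CC)$; what remains is to clear denominators. I would split $H^4(X,\QQ)$ orthogonally into the span of the Gushel pullbacks of the Schubert classes $\sigma_2,\sigma_{1,1}$ on $\Grass(2,V_5)$ and its complement, the interesting K3-type part $H^4_{\mathrm{van}}(X)$. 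The first summand is spanned by Chern classes of the Gushel bundle $\scrU_X$, hence consists of integral algebraic classes, and the Debarre--Kuznetsov description of the lattice $H^4(X,\ZZ)$ shows that these account integrally for all Hodge classes lying outside $H^4_{\mathrm{van}}$; so the problem reduces to the integral Hodge conjecture for $H^4_{\mathrm{van}}(X)$. For the latter I would produce an algebraic correspondence realising $H^4_{\mathrm{van}}(X,\ZZ)$ as an integral sub-Hodge-structure of the $H^2$ of an auxiliary variety on which the integral Hodge conjecture is the Lefschetz $(1,1)$ theorem --- for instance a surface, or the double EPW sextic $\widetilde{Y}_{A(X)}$, a hyperk\"ahler fourfold --- or, equivalently, argue through the Kuznetsov component $\Ku(X)$, a two-dimensional Calabi--Yau category for which the integral Hodge conjecture is available, together with the semiorthogonal decomposition of $\Db(X)$ whose remaining summands contribute only manifestly algebraic Tate classes. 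Transporting divisor classes back along such a correspondence then shows every integral Hodge class in $H^4_{\mathrm{van}}$ is algebraic; combined with the Grassmannian part and the rational surjectivity, this yields~(ii).

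\emph{Main obstacle.} Parts~(i) and the injectivity in~(ii) are routine, and the \emph{rational} Hodge conjecture is immediate from Bloch--Srinivas. The genuine difficulty is the integral refinement on the K3-type part of $H^4(X)$: one needs an algebraic correspondence inducing an \emph{integral} (not merely rational) Hodge isometry between $H^4_{\mathrm{van}}(X)$ and the primitive $H^2$ of an associated hyperk\"ahler fourfold --- or the corresponding statement for $\Ku(X)$. If one does not wish to set this up from scratch, the integral Hodge conjecture for GM fourfolds can be quoted from the literature on the cohomology and Chow groups of GM varieties (Debarre--Kuznetsov, Laterveer), or deduced from the integral Hodge conjecture for two-dimensional Calabi--Yau categories, which is the most economical route.
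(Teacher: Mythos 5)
Your argument matches the paper's: part~(i) via $\Pic(X)\cong\ZZ$ and rational connectedness, injectivity in~(ii) via Bloch--Srinivas together with $H^3(X,\ZZ)=0$, and the surjectivity onto integral Hodge classes by invoking the integral Hodge conjecture for GM fourfolds, which the paper likewise does not reprove but quotes from Perry's work on two-dimensional Calabi--Yau categories --- exactly the ``most economical route'' you identify at the end. Your more hands-on sketch via a correspondence to the double EPW sextic is not actually carried out (and would require care, since $\widetilde{Y}_A$ can be singular), but since you correctly fall back on citing the known integral Hodge conjecture, the proposal is sound and essentially identical to the paper's proof.
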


\begin{proof}
\ref{thm:ChowGroupGM4-1} This is true by the same argument as for Theorem~\ref{thm:ChowGroupGM3}\ref{thm:ChowGroupGM3-1}.
	
\ref{thm:ChowGroupGM4-2} Because $\CH_0(X)$ is supported on a point, Theorem~\ref{thm:Bloch-Srinivas} together with the fact that $H^3(X,\ZZ)=0$ gives that $\CH^2(X)_{\hom}=\CH^2(X)_{\alg}=0$. Hence the cycle class map in~\ref{thm:ChowGroupGM4-2} is injective. The last assertion follows from the integral Hodge conjecture for GM fourfolds, which is proven by Perry in~\cite{Perry-IHC}.
\end{proof}

\begin{remark}
  The Chow group $\CH_1(X)$ is not representable and is very much related to the $\CH_0$ of the double Eisenbud--Popescu--Walter (EPW)  sextic hyperk\"ahler fourfold. 
\end{remark}


\subsection*{Gushel--Mukai fivefolds.}

The integral Chow groups of GM fivefolds were recently computed in the thesis of Zhou~\cite{ZhouLin}, following the strategy of Fu--Tian in \cite{FuTian-CubicFivefold}. Analogous results with $\QQ$-coefficients had been obtained by Laterveer; see~\cite{Laterveer-GM5}.

\begin{theorem}[Zhou]
\label{thm:ChowGroupGM5}
Let $X$ be a complex GM fivefold, and let $J = J^5(X)$ be the intermediate Jacobian.
\begin{enumerate}
\item The cycle class maps induce isomorphisms
\[
\CH^0(X)\cong \ZZ,\quad \CH^1(X)\cong \ZZ,\quad \CH^2(X)\cong \ZZ\oplus \ZZ,\quad \CH^4(X)\cong \ZZ,\quad \CH^5(X)\cong \ZZ.
\]
		
\item For codimension~$3$ cycles, algebraic equivalence and homological equivalence coincide; i.e.~$\Griff^3(X)=0$. The Abel--Jacobi map $\CH^3(X)_{\alg} \isomarrow J(\CC)$ is an isomorphism.
		
\item We have $H^6(X,\ZZ) \cong \ZZ \oplus \ZZ$, and there is a split short exact sequence
\[
0 \tto J(\CC) \tto \CH^3(X) \tto H^6(X,\ZZ) \tto 0.
\]
\end{enumerate}
\end{theorem}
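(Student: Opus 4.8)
The plan is to compute the groups one codimension at a time, exploiting the rational connectedness of $X$ together with the very special shape of its cohomology. First I would record the Betti data via \cite{DK-Kyoto}: the groups $H^\ast(X,\ZZ)$ are torsion-free, the odd cohomology vanishes except in the middle degree, where $H^5(X)$ carries the Hodge structure whose intermediate Jacobian $J = J^5(X)$ is a $10$-dimensional abelian variety, and the even cohomology is of Tate type with $H^0 = H^{10} = \ZZ$, $H^2 = H^8 = \ZZ$, and $H^4 = H^6 = \ZZ^2$. The decisive structural feature is that $J^{2i-1}(X) = 0$ for every $i \neq 3$, since the only non-Tate cohomology sits in degree $5 = 2\cdot 3 - 1$. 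With this in hand the groups $\CH^0(X) = \ZZ$ and $\CH^1(X) = \Pic(X) = \ZZ$ are immediate (the latter because every complex GM variety has Picard group~$\ZZ$), and $\CH^5(X) = \CH_0(X) = \ZZ$ because $X$ is rationally connected.

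Next I would dispose of the codimensions $i \in \{2,4\}$ by showing that $\CH^i(X)_{\hom}$ vanishes and that the cycle class map surjects onto the Tate (hence Hodge) lattice $H^{2i}(X,\ZZ)$. Since $X$ is rationally connected, $\CH_0(X)$ is supported on a point, so Theorem~\ref{thm:Bloch-Srinivas} applies: for $i=2$ it gives $\Griff^2(X)=0$, and because $J^3(X)=0$ the weak representability of $\CH^2(X)_{\alg}$ forces $\CH^2(X)_{\alg}=\CH^2(X)_{\hom}=0$. The cycle class map is therefore injective, and surjectivity onto $H^4(X,\ZZ)=\ZZ^2$ follows by exhibiting explicit algebraic generators (restrictions of Schubert classes along the Gushel map~$\gamma$ together with powers of the hyperplane class), so $\CH^2(X)=\ZZ^2$. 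The codimension-$4$ case is handled by the analogous but higher decomposition-of-the-diagonal argument, in the spirit of \cite{Laterveer-SmallChow} and \cite{FuTian-CubicFivefold}: since $J^7(X)=0$, the homologically trivial $1$-cycles vanish and every $1$-cycle is a multiple of a line, giving $\CH^4(X)=\CH_1(X)\isomarrow H^8(X,\ZZ)=\ZZ$.

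The heart of the theorem, and the main obstacle, is the codimension-$3$ statement~(2): proving $\Griff^3(X)=0$ and that the Abel--Jacobi map $\CH^3(X)_{\alg}\to J(\CC)$ is an isomorphism. Bloch--Srinivas no longer suffices, since representability in this higher middle range is exactly what is at stake. Following Fu--Tian \cite{FuTian-CubicFivefold}, I would produce a family of curves, or of ``successive ruled surfaces'' in the sense of \cite{TianZong}, sweeping out $X$, giving a smooth projective parameter space and an incidence correspondence $\Gamma$ that induces a cylinder homomorphism onto $\CH^3(X)_{\alg}$; composing with the Abel--Jacobi map yields a surjection to $J$. Surjectivity of $\CH^3(X)_{\alg}\to J(\CC)$ is the easier half: one checks that $\Gamma$ induces a surjection onto $H^5(X,\QQ)$, so algebraic families already generate all of $J$. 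The difficult half is injectivity together with $\Griff^3(X)=0$, i.e.\ that there are no homologically trivial $2$-cycles beyond those captured by~$J$. I expect this to be the crux, and I would attack it by a refined decomposition of the diagonal---using $\CH_0(X)=\ZZ$ and that $H^5$ is the only non-Tate cohomology---to prove that $\CH^3(X)_{\alg}$ is representable by an abelian variety which $\Gamma$ identifies with $J^5(X)$; the passage from rational to \emph{integral} coefficients is then secured by the torsion-freeness of $H^\ast(X,\ZZ)$ and a Roitman-type argument controlling the torsion of $\CH^3(X)_{\alg}$ by that of~$J$.

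Finally, part~(3) is a formal consequence. The vanishing $\Griff^3(X)=0$ means $\CH^3(X)_{\hom}=\CH^3(X)_{\alg}$, which by part~(2) is identified with $J(\CC)$, so the cycle class map fits into
\[
0 \tto J(\CC) \tto \CH^3(X) \xrightarrow{~\class~} H^6(X,\ZZ) \tto 0,
\]
where surjectivity of $\class$ onto $H^6(X,\ZZ)=\ZZ^2$ again follows by producing algebraic generators as in codimension~$2$; the sequence splits because $H^6(X,\ZZ)$ is free. As a by-product, the injectivity of the rational Abel--Jacobi maps in all degrees lets one invoke Vial's Theorem~\ref{thm:Vial-MotDecomp} to conclude that $\frh(X)$ is an abelian motive with $\frh^5(X)\cong\frh_1(J)(-3)$ and all other summands Tate, which re-packages the rational content of the computation.
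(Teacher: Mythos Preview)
The paper does not actually prove this theorem: it is attributed to Zhou~\cite{ZhouLin} and merely cited, with the remark that the strategy follows Fu--Tian~\cite{FuTian-CubicFivefold}. So there is no proof in the paper to compare against line by line. However, the paper \emph{does} carry out the parallel computation for GM sixfolds in full (Theorem~\ref{thm:ChowGroupGM6} and the surrounding Lemmas and Propositions), explicitly saying that the strategy is the same as in~\cite{FuTian-CubicFivefold} and~\cite{ZhouLin}. Comparing your outline against that template, the easy codimensions ($0$, $1$, $2$, $5$) are handled exactly as you say, but two points in the harder codimensions are off.

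First, you have the ``successive ruled surfaces'' argument of Tian--Zong in the wrong place. In the sixfold proof (Lemmas~\ref{lemma:FourLines}--\ref{lem:F1IrredRC} and Proposition~\ref{prop:CH1GM6BoundedTorsion}) this machinery is used for \emph{$1$-cycles}, i.e.\ for $\CH^4$ in the fivefold case, not for the middle codimension. One shows that the Fano variety of lines is rationally chain connected and then uses ruled-surface chains to prove that $\CH_1(X)$ is generated by a line up to bounded torsion; divisibility of $\CH_1(X)_{\alg}$ kills the torsion, and $\Griff_1$ vanishes by birational invariance and rationality of~$X$. Your codimension-$4$ paragraph (``since $J^7(X)=0$, the homologically trivial $1$-cycles vanish'') skips precisely this geometric input, which is where the real work is.

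Second, and more seriously, your mechanism for passing from rational to integral coefficients in codimension~$3$ is not the one that works. A ``Roitman-type argument'' does not control torsion in $\Griff^3$ or in $\CH^3_{\alg}$ of a fivefold; Roitman's theorem is about $\CH_0$. In the paper's sixfold argument (Proposition~\ref{prop:Griff3GM6}) the pattern is: a refined decomposition of the diagonal (Lemma~\ref{lem:DODGM6}) shows $\Griff^3(X)$ is $N$-torsion for some~$N$; then the Bloch--Ogus spectral sequence identifies $\Griff^3(X)$ with $H^1(X,\scrH^4_\ZZ)$, and the Colliot-Th\'el\`ene--Voisin short exact sequence (relying on Bloch--Kato/Voevodsky) together with the rationality of~$X$ shows this group is torsion-free. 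The same ingredients---Bloch--Ogus, unramified cohomology, rationality---are what Zhou uses in the fivefold case, and they are entirely absent from your sketch. Without them the integral vanishing of $\Griff^3$ and the integral Abel--Jacobi isomorphism remain unproven.
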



\subsection*{Gushel--Mukai sixfolds.}

We now turn to GM sixfolds, which require more work. The proof of the following result will take up the rest of this section. With rational coefficients, similar results were also obtained in~\cite{BolognesiLaterveer-GM6}.

\begin{theorem}
\label{thm:ChowGroupGM6}
{\samepage Let $X$ be a complex GM sixfold.
\begin{enumerate}
\item \label{thm:ChowGroupGM6-1} The cycle class maps induce isomorphisms
\[
\CH^0(X)\cong \ZZ,\quad \CH^1(X)\cong \ZZ, \quad \CH^6(X)\cong \ZZ.
\]
		
\item\label{thm:ChowGroupGM6-2} We have $H^{10}(X,\ZZ)\cong \ZZ$, generated by the class of a line contained in~$X$. The cycle class map $\CH^5(X)\to H^{10}(X,\ZZ)\left(5\right)$ is an isomorphism. 

\item\label{thm:ChowGroupGM6-3} We have $H^4(X,\ZZ) \cong \ZZ\oplus \ZZ$, generated by the classes $H^2$ and $c_2(\mathscr{U}_X)$, where $\mathscr{U}_X$ is the Gushel bundle on~$X$ and $H \in \CH^1(X) \cong \ZZ$ is the class of an ample generator. The cycle class map $\CH^2(X)\to H^4(X, \ZZ)\left(2\right)$ is an isomorphism. 
		
\item\label{thm:ChowGroupGM6-4} We have $H^6(X,\ZZ) \cong \ZZ^{24}$ with Hodge numbers $h^{4,2} = h^{2,4} = 1$ and $h^{3,3} = 22$. The cycle class map $\CH^3(X)\to H^6(X,\ZZ)\left(3\right)$ is injective.		
\end{enumerate}
}\end{theorem}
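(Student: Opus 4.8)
The plan is to handle the four parts separately; only~(ii) and~(iv) require new ideas. Part~(i) is immediate: $\CH^0(X)\cong\ZZ$, $\CH^1(X)=\Pic(X)\cong\ZZ$ since every GM variety has Picard group~$\ZZ$, and $\CH^6(X)=\CH_0(X)\cong\ZZ$ because $X$ is rationally connected. The cohomology statements in~(ii)--(iv) I would take from Debarre--Kuznetsov \cite{DK-Kyoto}: $H^\bullet(X,\ZZ)$ is torsion-free and concentrated in even degrees, of Tate type away from degree~$6$, and $H^6(X,\ZZ)\cong\ZZ^{24}$ carries a weight-$6$ Hodge structure of K3 type with $h^{4,2}=h^{2,4}=1$, $h^{3,3}=22$. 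Below the middle degree the restriction $\gamma^*$ along the Gushel map identifies $H^k(X,\ZZ)$ for $k\le 5$ with the corresponding cohomology of $\Grass(2,V_5)$, so $H^4(X,\ZZ)\cong\ZZ^2$ is freely generated by $H^2$ and $c_2(\scrU_X)$; and Poincaré duality gives $H^{10}(X,\ZZ)\cong\ZZ$, generated by the class of a line $\ell\subset X$ because cupping with~$H$ sends $[\ell]$ to a generator of $H^{12}(X,\ZZ)\cong\ZZ$.

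For~(iii): since $\CH_0(X)$ is supported on a point, Theorem~\ref{thm:Bloch-Srinivas} gives $\Griff^2(X)=0$, and as $H^3(X,\ZZ)=0$ forces $J^3(X)=0$ it also gives that $\CH^2(X)_{\alg}$ is weakly representable by a subvariety of $J^3(X)=0$; hence $\CH^2(X)_{\hom}=\CH^2(X)_{\alg}=0$ and $\CH^2(X)\hookrightarrow H^4(X,\ZZ)\bigl(2\bigr)$ is injective, with image containing the generators $H^2$ and $c_2(\scrU_X)$, hence an isomorphism. After this, (ii) comes down to $\CH^5(X)\cong\ZZ$ and (iv) to $\CH^3(X)_{\hom}=0$.

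For $\CH^5(X)\cong\ZZ$: I would first prove that the Fano scheme $F_1(X)$ of lines on~$X$ is rationally chain connected. Every line on a GM sixfold maps isomorphically onto a line of $\Grass(2,V_5)$, so $F_1(X)$ maps to the (rational) flag variety of lines on the Grassmannian, and an analysis of this morphism — its fibres being finite or rational — yields the rational chain connectedness. Consequently any two lines on~$X$ are rationally equivalent as $1$-cycles, so all lines have a common class $\lambda\in\CH_1(X)=\CH^5(X)$ (this also reproves that $[\ell]$ generates $H^{10}(X,\ZZ)$). Next, adapting the ``successive ruled surface'' technique of Tian and Zong~\cite{TianZong} — available because $X$ is Fano of index~$4$, so lines are abundant — one shows that every effective $1$-cycle on~$X$ is rationally equivalent to a non-negative multiple of~$\lambda$, whence $\CH_1(X)=\ZZ\cdot\lambda$. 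Since the cycle class map sends $\lambda$ to a generator of $H^{10}(X,\ZZ)$, it is an isomorphism $\CH^5(X)\isomarrow H^{10}(X,\ZZ)\bigl(5\bigr)$.

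For $\CH^3(X)_{\hom}=0$, the main step is $\Griff^3(X)=0$, and I would use the Bloch--Srinivas decomposition of the diagonal: as $\CH_0(X)$ is supported on a point, $[\Delta_X]=[X\times\pt]+\Delta_1$ in $\CH^6(X\times X)_\QQ$ with $\Delta_1$ supported on $D\times X$ for a divisor~$D$, and since $[X\times\pt]$ acts as zero on $\CH^3(X)_\QQ$, the identity of $\CH^3(X)_\QQ$ factors through the restriction to a desingularization of~$D$. Arranging $D$ to be a GM fivefold $Y$ — a general hyperplane section of a GM variety is again a GM variety — one passes to Griffiths groups and invokes Zhou's vanishing $\Griff^3(Y)=0$ (Theorem~\ref{thm:ChowGroupGM5}) to obtain $\Griff^3(X)_\QQ=0$; the same factorization exhibits $\CH^3(X)_{\alg,\QQ}$ as a retract of $\CH^3(Y)_{\alg,\QQ}\cong J^5(Y)\bigl(\CC\bigr)_\QQ$ cut out by an idempotent correspondence which, factoring through $H^5(X)=0$, acts as zero on $H^5(Y)$ and hence (rigidity of abelian varieties) vanishes, so that $\CH^3(X)_{\alg,\QQ}=0$; together these give $\CH^3(X)_{\hom,\QQ}=0$, which a torsion-freeness argument for $\CH^3(X)$ (via Bloch--Kato/unramified cohomology, or the explicit geometry) upgrades to integral coefficients. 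The two genuinely new parts — proving $F_1(X)$ is rationally chain connected and reducing an arbitrary $1$-cycle to a multiple of a line in~(ii), and the vanishing $\Griff^3(X)=0$ in~(iv) — are where I expect the real difficulty; in the latter the delicate point is that the divisor delivered by Bloch--Srinivas is not a priori a hyperplane section, so making the reduction to a GM fivefold rigorous will likely require spreading the cycle over a pencil of GM fivefold hyperplane sections (with base locus a GM fourfold) together with a limiting argument, and then disposing of torsion. Everything else is formal given the Debarre--Kuznetsov description of GM varieties, Theorems~\ref{thm:Bloch-Srinivas} and~\ref{thm:ChowGroupGM5}, and $\Pic(X)\cong\ZZ$.
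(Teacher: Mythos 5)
Your treatment of (i) and (iii), and the overall strategy for (ii) (rational chain connectedness of $F_1(X)$, hence a single line class, followed by the Tian--Zong ruled-surface argument), match the paper. But there are two concrete gaps. First, your proposed mechanism for the rational chain connectedness of $F_1(X)$ does not work: the map from $F_1(X)$ to the flag variety of lines in $\Grass(2,V_5)$ sending $L$ to $\gamma(L)$ is generically finite onto its image (for a sixfold $\gamma$ is $2{:}1$, so the preimage of a line of $\Grass(2,V_5)$ is a conic, which only rarely splits into lines), and a generically finite map to a rational variety says nothing about rational chain connectedness of the source. The paper instead uses the identification $F_1(X)\cong \Hilb^{\PP^1}\bigl(\PP_X(\scrU_X)/\PP(V_5)\bigr)$: the fibres of $\sigma\colon F_1(X)\to\PP(V_5)$ are Hilbert schemes of lines on $3$-dimensional quadrics of corank $\leq 3$, which are positive-dimensional and rationally chain connected, and the closure of the $\PP^3$-bundle locus is rationally connected by Graber--Harris--Starr and meets every fibre. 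Relatedly, the ruled-surface argument only yields that $N\alpha$ lies in the span of line classes for one fixed $N>0$; to get $\CH_1(X)\cong\ZZ$ integrally you still need that $\CH_1(X)_{\alg}$ is divisible (hence zero, being killed by $N$) and that $\Griff_1(X)=0$ by birational invariance of the Griffiths group of $1$-cycles together with the rationality of $X$ --- steps your sketch elides.

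Second, for (iv) your reduction to a GM fivefold is the wrong move, as you half-suspect: Bloch--Srinivas hands you an arbitrary divisor, and the pencil/limit fix you propose is not carried out and is genuinely delicate. The paper's route avoids this entirely: since by then one knows $\CH_0(X)\cong\ZZ$ \emph{and} $\CH_1(X)\cong\ZZ$, Laterveer's refined decomposition of the diagonal puts the remaining term in $X\times T$ with $T$ of codimension $2$, i.e.\ a smooth projective \emph{fourfold} after resolution, so the correspondence acting on $\Griff^3(X)$ and $\CH^3(X)_{\alg}$ factors through $\Griff^1(T)$ and $\CH^1(T)_{\alg}\cong\Pic^0(T)\bigl(\CC\bigr)$ --- divisor-type groups that are controlled for an arbitrary $T$, with no need for $T$ to be a GM variety. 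One then gets that $n$ kills $\Griff^3(X)$ and (via $J^5(X)=0$ and compatibility with Abel--Jacobi maps) $\CH^3(X)_{\alg}$; divisibility disposes of $\CH^3(X)_{\alg}$, while the vanishing of the torsion of $\Griff^3(X)$ requires the separate identification $\Griff^3(X)\cong H^1(X,\scrH^4_\ZZ)$ via Bloch--Ogus and the vanishing of $H^4_{\nr}(X,\ZZ/n\ZZ)$ by rationality of $X$ (Colliot-Th\'el\`ene--Voisin, relying on Bloch--Kato). Your sketch gestures at these tools but does not supply the mechanism.
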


\begin{remarks}\leavevmode
\begin{enumerate}
\item The theorem does not give any information about the structure of the Chow group~$\CH^4(X)$, which is certainly the most interesting one. It is not representable and is closely related to the Chow group of $0$-cycles on the associated double EPW sextic.

\item As we shall show in the next section, the Hodge conjecture (with rational coefficients) is true for GM varieties. Moreover, $H^6(X,\ZZ)$ has no torsion (see \cite[Proposition~3.4]{DK-Kyoto}), so that the image of $\CH^3(X)\to H^6(X,\ZZ)\left(3\right)$ is a subgroup of finite index in the space of Hodge classes in $H^6(X,\ZZ)\left(3\right)$.
\end{enumerate}
\end{remarks}

\subsection{}
The proof of Theorem~\ref{thm:ChowGroupGM6} uses the same strategy as in \cite{FuTian-CubicFivefold} and~\cite{ZhouLin}, which is very much inspired by the work of Colliot-Th\'el\`ene--Voisin~\cite{CTVoisin} and Voisin~\cite{Voisin-H4nr}.

The proof of~\ref{thm:ChowGroupGM6-1} is the same as for Theorem~\ref{thm:ChowGroupGM3}\ref{thm:ChowGroupGM3-1}.

Part~\ref{thm:ChowGroupGM6-3} is an application of the Bloch--Srinivas theorem. Indeed, as $\CH_0(X)$ is supported on a point, parts~\ref{BS-2} and~\ref{BS-3} of Theorem~\ref{thm:Bloch-Srinivas} imply that $\CH^2(X)_{\hom} = \CH^2(X)_{\alg} = 0$ since $H^3(X, \ZZ)=0$. Hence the cycle class map $\CH^2(X)\to H^4(X, \ZZ)$ is injective. On the other hand, as is shown in \cite[Proposition~3.4]{DK-Kyoto}, $H^*(X,\ZZ)$ is torsion-free, and the homomorphism $\gamma^*\colon H^4\left(\Grass(2,V_5),\ZZ\right)\to H^4(X, \ZZ)$ induced by the Gushel map~$\gamma$ is an isomorphism. This gives~\ref{thm:ChowGroupGM6-3} because $H^2 - c_2(\mathscr{U}_X)$ and~$c_2(\mathscr{U}_X)$ are the images under~$\gamma^*$ of the Schubert classes  $\sigma_2$ and~$\sigma_{1,1}$, respectively, which generate $H^4\left(\Grass(2,V_5),\ZZ\right)$.

We next turn to the proof of part~\ref{thm:ChowGroupGM6-2}. Again by \cite[Proposition~3.4]{DK-Kyoto}, the assertions about $H^{10}(X,\ZZ)$ and the surjectivity of the cycle class map are clear because by~\cite[Theorem~4.7]{DK-Kyoto} $X$ contains lines. (See also below.) As for the injectivity of the cycle class map, we first need some geometric facts.

\begin{lemma}
\label{lemma:FourLines}
Any two points of $X$ can be connected by at most four lines.
\end{lemma}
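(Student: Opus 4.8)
The goal is to show that any two points $x, y \in X$ on a GM sixfold can be joined by a chain of at most four lines. The natural approach is via the geometry of the Gushel map $\gamma\colon X \to \Grass(2,V_5)$ and the known structure of lines on GM varieties from the Debarre--Kuznetsov papers. Recall that GM sixfolds are all of Gushel type with $W = K \oplus \wedge^2 V_5$, so $\gamma$ is a double cover of a linear section $M$ of $\Grass(2,V_5)$ (in fact, since $\dim X = 6$, one has $M = \Grass(2,V_5)$ itself, a six-dimensional quadric section of the Plücker cone intersected appropriately — more precisely $\gamma$ realises $X$ as a double cover of $\Grass(2,V_5)$ branched along a GM fivefold). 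The first step is therefore to reduce the problem, via $\gamma$, to a statement about connecting points by lines on $\Grass(2,V_5)$ and lifting such lines (or short chains of lines) through the double cover.

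\textbf{Key steps.} First I would recall the classification of lines on $\Grass(2,V_5)$: through a general point there is a family of lines, and any two points of $\Grass(2,V_5)$ are connected by a chain of at most two lines (two $2$-planes $U, U'$ in $V_5$ either meet in a line — then they lie on a common line of the Grassmannian — or they don't, in which case one picks an intermediate $2$-plane meeting each in a line, giving a chain of length two). This is the classical fact that $\Grass(2,V_5)$ has "lines-connecting-number" two. Second, I would analyze how lines on $X$ map under $\gamma$: a line $L \subset X$ maps either isomorphically to a line in $\Grass(2,V_5)$, or $2$-to-$1$ onto a point (impossible, lines are positive-dimensional images under a finite map) — so $\gamma|_L$ is an isomorphism onto a line of the Grassmannian, and conversely lines in $\Grass(2,V_5)$ meeting the branch locus appropriately lift. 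The crucial point is that the preimage $\gamma^{-1}(\ell)$ of a line $\ell \subset \Grass(2,V_5)$ is either two disjoint lines, one line with multiplicity (if $\ell$ is tangent to or contained in the branch divisor), or a smooth conic double-covering $\ell$; in the last case the conic is a union of two lines only when it degenerates. Third, and this is the heart of the argument: given $x, y \in X$ with images $\bar{x} = \gamma(x)$, $\bar{y} = \gamma(y)$, connect $\bar x$ to $\bar y$ by a chain of at most two lines $\ell_1, \ell_2$ in $\Grass(2,V_5)$; then lift. Lifting one line of the Grassmannian through the double cover and keeping track of which sheet one lands on costs at most a factor of $2$ in the chain length — roughly, to get from $x$ (over $\bar x$) along a lift of $\ell_1$ one may arrive over the intermediate point on the "wrong sheet," and one needs a further line over $\bar x$ or over the intermediate point to switch sheets, using that $\gamma^{-1}(\text{point})$ consists of two points joined by... — this switching is where one must be careful and where a conic in $X$ mapping $2:1$ to a line, being rationally connected itself, or the abundance of lines through each point (the Fano scheme of lines through a point of $X$ is positive-dimensional) supplies the extra lines. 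Summing up: $2$ lines downstairs, each contributing at most $2$ lines upstairs after sheet-corrections, but with savings at the shared intermediate point, yields the bound $4$.

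\textbf{Main obstacle.} The delicate part is the sheet-bookkeeping in the double cover: a line $\ell \subset \Grass(2,V_5)$ whose preimage in $X$ is a disjoint union of two lines gives, over each endpoint of $\ell$, a \emph{specific} one of the two points in the fibre, and one cannot freely choose which. The argument must show that the failure to match sheets can always be repaired by one extra line — this requires knowing that through any point $p \in X$ there pass lines $L$ with $\gamma(L) \ni \gamma(p)$ realising \emph{both} behaviours, or that the two points of a fibre $\gamma^{-1}(\bar p)$ can themselves be connected by a short chain of lines in $X$. Establishing this last sub-claim — that the two points of a generic fibre of $\gamma$ are joined by at most two lines on $X$ — is, I expect, the real content of the lemma, and it should follow from studying the variety of lines on $X$ through a fixed point together with the involution of the double cover, perhaps by exhibiting an explicit conic through the two points (e.g. the preimage of a suitable line in $\Grass(2,V_5)$ tangent to the branch fivefold) that degenerates into two lines, or by a dimension count on the incidence variety of such lines. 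The bound "four" strongly suggests the decomposition $4 = 2 + 2$ (two lines to fix the Grassmannian position, two more to fix the sheet), and making each "$2$" rigorous via the geometry of the Fano variety of lines is where the work lies.
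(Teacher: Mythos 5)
Your proposal has a genuine gap: the ``sheet-bookkeeping'' problem that you yourself flag as the heart of the matter is never resolved, and it is not clear that your route can resolve it. The obvious candidates for lines joining the two points $p_1,p_2$ of a fibre $\gamma^{-1}(\bar p)$ do not work: if $\ell\subset \Grass(2,V_5)$ is a line through $\bar p$ whose preimage splits as $L_1\sqcup L_2$, then each $L_i$ contains exactly one of $p_1,p_2$ and the two lines are disjoint, so they do not connect the sheets; if the preimage is an irreducible conic it is not a chain of lines at all. So the sub-claim ``the two points of a fibre are joined by at most two lines'' requires a genuinely different argument, which you do not supply. Moreover, even granting that sub-claim, your arithmetic does not obviously close at four: two lines downstairs plus a possible sheet-correction at each of the two endpoints of the chain could cost $2+2+2$ lines upstairs, and the asserted ``savings at the shared intermediate point'' is not justified.

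The paper avoids the double cover entirely. Writing $\gamma(x)=[U_2]$ and $\gamma(x')=[U'_2]$, one chooses a third $2$-plane $U''_2$ meeting each of $U_2,U'_2$ in a line and picks \emph{any} point $x''\in\gamma^{-1}\{[U''_2]\}$. The key tool is the quadric fibration $\rho_1\colon \PP_X(\scrU_X)\to\PP(V_5)$, whose fibre over a point $[y]$ is a $3$-dimensional quadric, together with the identification of $F_1(X)$ with the relative Hilbert scheme of lines of this fibration (\cite[Proposition~4.1]{DK-Kyoto}). If $y$ generates $U_2\cap U''_2$, then $x$ and $x''$ both lie on the quadric fibre over $[y]$, and any two points of a $3$-dimensional quadric are joined by at most two lines inside it, which are lines of $X$; the same applies to $x'$ and $x''$, giving the bound four. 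Because the intermediate point $x''$ is chosen once in $X$ and both halves of the chain are built inside quadric fibres through that same point, the sheet ambiguity never arises. If you want to salvage your approach, you would need to prove your sheet-switching sub-claim directly, but the quadric-fibration argument is both shorter and cleaner.
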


\begin{proof}
Recall from Section~\ref{subsec:GMIntro} that the Gushel bundle~$\scrU_X$ on~$X$ is the pull-back of the tautological rank~$2$ bundle on the Grassmannian via the Gushel map $\gamma \colon X \to \Grass(2,V_5)$. The embedding $\scrU_X \subset V_5\otimes \scrO_X$  induces a natural morphism
\begin{equation}
\label{eq:rho1}
\rho_1\colon \PP_X(\mathscr{U}_X)\tto \PP(V_5),
\end{equation}
which is a flat morphism whose fibres are isomorphic to quadrics in~$\PP^4$. We refer to \cite[Section~4]{DK-GMClassification} and \cite[Section~2.3]{DK-Kyoto} for details. (Note that for a GM variety~$X$ of dimension~$6$, the locus that in \textit{loc.\ cit.}\ is called $\Sigma_1(X)$ is empty.)
	
Let $F_1(X)$ be the Fano variety of lines in~$X$. If $L \subset X$ is a line, then there are uniquely determined subspaces $U_1 \subset U_3 \subset V_5$ (depending on~$L$, of course) such that
\[
\gamma(L) = \left\{U_2 \in \Grass(2,V_5) \bigm| U_1 \subset U_2 \subset U_3\right\}.
\]
The map $L \mapsto U_1$ gives a morphism $\sigma \colon F_1(X)\to \PP(V_5)$. By \cite[Proposition~4.1]{DK-Kyoto}, $F_1(X)$ can be identified, as a scheme over~$\PP(V_5)$, with the relative Hilbert scheme of lines in the quadric fibration~$\rho_1$:
\begin{equation}
\label{eq:F1=Hilb}
F_1(X)\cong \Hilb^{\PP^1}\!\left(\PP_X(\mathscr{U}_X)/\PP(V_5)\right).
\end{equation}
	
Let $x$ and $x^\prime$ be points of~$X$. Write $\gamma(x)=[U_2]$ and $\gamma(x^\prime)=[U_2^\prime]$ in $\Grass(2,V_5)$. It is clear that there exists a third $2$-dimensional subspace $U_2^\pprime \subset V_5$ such that $U_2\cap U_2^\pprime$ and $U_2^\prime \cap U_2^\pprime$ are both $1$-dimensional. Let $x^\pprime \in X$ be a point in $\gamma^{-1}\left\{[U_2^\pprime]\right\}$. Let $y$ be a generator of $U_2\cap U_2^\pprime$, which defines a point $[y] \in \PP(V_5)$.
	
By construction, $(x, [y])$ and $(x^\pprime, [y])$ are points of $\PP_X(\mathscr{U}_X)$ that lie on the fibre of~$\rho_1$ over~$[y]$. This fibre is a quadric of dimension~$3$; hence any two points on it can be connected by at most two lines in it. Therefore, $x$ and $x^\pprime$ can be connected by two lines in~$X$. Similarly, $x^\prime$ and $x^\pprime$ can be connected by at most two lines, and we are done.
\end{proof}

\begin{remark}
Lemma~\ref{lemma:FourLines} will be applied in Proposition~\ref{prop:CH1GM6BoundedTorsion}, where only the following weaker assertion is needed: \textit{any two points of\, $X$ can be connected by finitely many lines.} Let us sketch an alternative argument for this fact that was kindly indicated to us by the referee. First, it is easy to see that through every point of $X$ passes a line. Then, by \cite[Theorem~IV.4.16]{KollarBookRationCurve}, one can perform the rationally connected quotient $X\dashrightarrow Z$ for the (pre)relation of line-chain connectedness in $X$. We claim that $Z$ is a point. Suppose $\dim(Z)\geq 1$, and consider the Zariski closure $D$ of the inverse image of a prime divisor of $Z$. Since $\operatorname{N}_1(X)\cong \mathbb{R}$, the divisor $D$ is ample; on the other hand, $(D\cdot L)=0$ for  any line $L$ in $X$. This gives a contradiction. This argument is also in \cite[Corollary~IV.4.14]{KollarBookRationCurve}.
\end{remark}

\begin{lemma}
\label{lem:F1IrredRC}
The variety $F_1(X)$ of lines on~$X$ is rationally chain connected. In particular, all lines contained in $X$ have the same class in $\CH_1(X)$.
\end{lemma}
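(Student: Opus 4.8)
The plan is to combine the description~\eqref{eq:F1=Hilb} of $F_1(X)$ as the relative Hilbert scheme of lines in the quadric fibration $\rho_1\colon\PP_X(\scrU_X)\to\PP(V_5)$ with Lemma~\ref{lemma:FourLines}. For $x\in X$ write $F_1(X,x)\subseteq F_1(X)$ for the closed subvariety of lines through~$x$; by the proof of Lemma~\ref{lemma:FourLines} (or by the results of Debarre--Kuznetsov) it is non-empty for every~$x$. The key local statement, which I would establish first, is:

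\emph{Claim.} For every $x\in X$ the variety $F_1(X,x)$ is rationally chain connected.

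Granting the Claim, the lemma follows. Given two lines $L,L'\subseteq X$, choose $x\in L$ and $x'\in L'$. By Lemma~\ref{lemma:FourLines} there is a chain of lines $M_1,\dots,M_k\subseteq X$ with $k\le4$, $x\in M_1$, $x'\in M_k$ and $M_i\cap M_{i+1}\neq\emptyset$; pick $p_i\in M_i\cap M_{i+1}$. Then $[L]$ and~$[M_1]$ both lie on $F_1(X,x)$, the points $[M_i]$ and~$[M_{i+1}]$ both lie on $F_1(X,p_i)$, and $[M_k]$ and~$[L']$ both lie on $F_1(X,x')$; by the Claim, consecutive entries of this list are joined by a connected chain of rational curves inside the relevant closed subvariety, hence inside $F_1(X)$, and concatenating shows $[L]$ and~$[L']$ are joined by a chain of rational curves in~$F_1(X)$. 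Since $L,L'$ were arbitrary and $F_1(X)\neq\emptyset$, the variety $F_1(X)$ is rationally chain connected. For the final assertion: rational chain connectedness gives $\CH_0\bigl(F_1(X)\bigr)\cong\ZZ$, and if $\mathcal I\subseteq F_1(X)\times X$ is the universal line, with $p\colon\mathcal I\to F_1(X)$ (flat, fibres $\PP^1$) and $q\colon\mathcal I\to X$, then $q_*p^*\bigl([L]\bigr)=[L]$ in $\CH_1(X)$ for each line~$L$ (because $p^*\bigl([L]\bigr)=[\{[L]\}\times L]$ and $q$ restricts to an isomorphism $\{[L]\}\times L\isomarrow L$); as all point classes on $F_1(X)$ coincide, all lines have the same class in~$\CH_1(X)$.

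To prove the Claim I would trace the point $x$ through the identification~\eqref{eq:F1=Hilb}. If $L\ni x$ is a line, its canonical lift $\tilde L$ --- a section over $L$ of $\PP_X(\scrU_X)\to X$ --- is a line contained in the fibre $Q_{[v]}:=\rho_1^{-1}([v])$ and passing through the point $(x,[v])$, where $[v]=\sigma(L)$; moreover $[v]$ is the point of $\PP(\scrU_{X,x})\cong\PP^1$ corresponding to the $1$-dimensional subspace $U_1\subseteq\scrU_{X,x}$ cut out by the flag defining $\gamma(L)$. Hence $\sigma$ restricts to a morphism $F_1(X,x)\to\PP(\scrU_{X,x})\cong\PP^1$ whose fibre over $[v]$ is the variety of lines through the point $(x,[v])$ on the three-dimensional quadric $Q_{[v]}$. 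Since the lines through a point of a quadric threefold form a conic, $F_1(X,x)\to\PP^1$ is a conic bundle, with smooth generic fibre ($Q_{[v]}$ is a smooth quadric threefold for general $[v]$, by Debarre--Kuznetsov); its generic fibre is thus a smooth conic over $\CC(t)$, hence $\cong\PP^1_{\CC(t)}$ since $\operatorname{Br}\bigl(\CC(t)\bigr)=0$ by Tsen's theorem. Therefore $F_1(X,x)$ is birational to $\PP^1\times\PP^1$, in particular rational, hence rationally chain connected. What remains to be checked --- and what I expect to be the main obstacle --- is that $F_1(X,x)$ is irreducible (equivalently, that $F_1(X,x)\to\PP^1$ is equidimensional, without vertical components) and that the degenerate conics do not spoil rationality, for \emph{all}~$x$, including the special points where the line $\PP(\scrU_{X,x})\subseteq\PP(V_5)$ meets the discriminant of~$\rho_1$ badly; this is the point at which one must invoke the detailed structure theory of Debarre--Kuznetsov rather than soft arguments.

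Finally, the irreducibility of $F_1(X)$ itself (as the name of the lemma suggests one wants) can be read off~\eqref{eq:F1=Hilb} directly: the generic fibre of $\rho_1$ is a smooth quadric threefold, whose variety of lines is $\cong\PP^3$, so $\sigma\colon F_1(X)\to\PP(V_5)\cong\PP^4$ is generically a $\PP^3$-fibration over an irreducible rational base, and $F_1(X)$ is irreducible of dimension~$7$. One could alternatively deduce rational chain connectedness of $F_1(X)$ along this route: restricting $\sigma$ over a general line in $\PP(V_5)$ and splitting the quadric fibration there by Tsen's theorem shows that a general pair of points of $F_1(X)$ is joined by a rational chain, which for the proper variety $F_1(X)$ forces full rational chain connectedness; the difficulty of controlling the special locus is the same either way.
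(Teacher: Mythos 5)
Your global strategy is genuinely different from the paper's, but it rests on a Claim that you yourself flag as unproven, and that is exactly where the real difficulty lies. You reduce everything to the assertion that $F_1(X,x)$, the lines through a fixed point $x\in X$, is rationally chain connected \emph{for every} $x$, and you propose to see $F_1(X,x)$ as a conic bundle over $\PP(\scrU_{X,x})\cong\PP^1$. The problem is that the fibre of $F_1(X,x)\to\PP(\scrU_{X,x})$ over $[v]$ is the family of lines through the point $(x,[v])$ of the quadric threefold $Q_{[v]}=\rho_1^{-1}([v])$, and this is a conic only when $(x,[v])$ is a \emph{smooth} point of $Q_{[v]}$. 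By \cite[Proposition~4.5]{DK-GMClassification} the fibres of $\rho_1$ can have corank up to~$3$, and nothing prevents $(x,[v])$ from being a singular point of $Q_{[v]}$ for some $[v]$; there the lines through $(x,[v])$ form a two-dimensional family, the map is not flat, and $F_1(X,x)$ may acquire extra components sitting over such points. Rationality of the generic fibre (Tsen) then says nothing about these components, and birationality to $\PP^1\times\PP^1$ does not give rational chain connectedness of a possibly reducible variety. Your closing alternative has the same defect: joining a \emph{general} pair of points of $F_1(X)$ by a rational chain does not formally yield chain connectedness of \emph{all} pairs for a variety that is not known to be irreducible (let alone smooth), precisely because of possible components over the discriminant. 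So the proposal is incomplete at its central step.

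The paper avoids this by never fixing a point of $X$: it works directly with $\sigma\colon F_1(X)\to\PP(V_5)$ via~\eqref{eq:F1=Hilb}, whose fibres are the \emph{full} Hilbert schemes of lines of the quadric fibres of~$\rho_1$. Because the corank is at most~$3$, there are only four cases, each of which can be written down explicitly ($\PP^3$; two $\PP^1$-bundles over $\PP^1$ glued along a quadric surface; a $\PP^2$-bundle over $\PP^1$ with a contracted section; two copies of $\PP^3$ glued along a $\PP^2$) and each of which is visibly rationally chain connected --- this is the explicit case analysis that replaces your unproven Claim. One then takes the closure $Z$ of $\sigma^{-1}(U)$ over the open locus $U$ where $\sigma$ is a $\PP^3$-bundle; by \cite[Corollary~1.3]{GraberHarrisStarr}, $Z$ is rationally connected, and since $Z$ meets every fibre of~$\sigma$, any point of $F_1(X)$ connects through its ($\sigma$-)fibre to $Z$ and hence to any other point. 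Note that Lemma~\ref{lemma:FourLines} is not needed for this lemma at all (it is used later, in Proposition~\ref{prop:CH1GM6BoundedTorsion}). Your deduction of ``all lines have the same class in $\CH_1(X)$'' from rational chain connectedness of $F_1(X)$ via the universal family is correct and is a nice way to make that implication explicit; if you want to salvage your route, the missing ingredient is an explicit description, in the spirit of the above case analysis, of the lines through a singular point of a corank $\le 3$ quadric threefold and of how the resulting components of $F_1(X,x)$ attach to the conic-bundle part.
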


Note that we do not claim that $F_1(X)$ is irreducible; this is probably true, but we do not need it.

\begin{proof}
We use~\eqref{eq:F1=Hilb}. By \cite[Proposition~4.5]{DK-GMClassification}, the fibres of the map~\eqref{eq:rho1} are quadrics in~$\PP^4$ of corank at most~$3$. If $Q \subset \PP^4$ is a non-degenerate quadric, its Hilbert scheme of lines is isomorphic to~$\PP^3$. If $Q$ has corank~$1$, its Hilbert scheme is the union of two $\PP^1$-bundles over~$\PP^1$, glued along a $2$-dimensional quadric. If the corank is~$2$, the Hilbert scheme is obtained from a $\PP^2$-bundle over~$\PP^1$ by contracting a section, and if the corank is~$3$, we get a union of two copies of~$\PP^3$, glued along a~$\PP^2$. So in all cases the fibres are rationally chain connected. Moreover, there is a non-empty open subset $U \subset \PP(V_5)$ over which $\sigma$ is a $\PP^3$-bundle. Let $Z \subset F_1(X)$ be the closure of~$\sigma^{-1}(U)$. By \cite[Corollary~1.3]{GraberHarrisStarr}, $Z$ is rationally connected, and since $Z$ meets all fibres, the assertion follows.
\end{proof}

\begin{proposition}
\label{prop:CH1GM6BoundedTorsion}
There exists an integer $N>0$ such that for any $\alpha\in \CH_1(X)$ the class~$N\alpha$ is a multiple of the class of a line in~$X$. In particular, $\CH_1(X)_\QQ\cong \QQ$, generated by the class of a line.
\end{proposition}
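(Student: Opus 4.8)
The plan is to prove the sharper statement that there exists an integer $N>0$ with $N\cdot\CH_1(X)\subseteq\ZZ\cdot\ell$, where $\ell\in\CH_1(X)$ denotes the common class of all lines on~$X$ (well defined by Lemma~\ref{lem:F1IrredRC}); the final assertion follows after tensoring with~$\QQ$, since $\ell$ has nonzero image in $H^{10}(X,\QQ)$. As $\CH_1(X)$ is generated by classes of irreducible curves, it suffices to attach to every irreducible curve $C\subset X$ a positive integer $d_C$, bounded by a constant $m=m(X)$, together with a relation $d_C\cdot[C]\in\ZZ\cdot\ell$; one may then take $N$ to be a common multiple of $1,\dots,m$.

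First I would set up, for a fixed point $p\in X$, an algebraic family of ``chains of lines ending at~$p$''. By Lemma~\ref{lemma:FourLines} every point of~$X$ is joined to~$p$ by a chain of at most four lines, so the incidence variety parametrising tuples $(x_0,L_1,x_1,\dots,L_\kappa,x_\kappa)$ with $\kappa\le 4$, each $L_j$ a line on~$X$, $x_{j-1},x_j\in L_j$ and $x_\kappa=p$, maps onto~$X$ via $x_0$. By Noetherian induction on~$X$ (applying generic flatness, and cutting positive-dimensional fibres by general hyperplane sections) I would produce a finite stratification of~$X$ carrying on each stratum such a chain family that is finite of bounded degree over the stratum. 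For a given irreducible curve~$C$, restricting the family attached to the stratum through the generic point of~$C$ and normalising yields: a smooth projective curve~$B$, a finite surjective morphism $\nu\colon B\to C$ of degree $d_C\le m$, morphisms $f_1,\dots,f_\kappa\colon B\to F_1(X)$ recording for each $b\in B$ a chain $L_1(b),\dots,L_\kappa(b)$ of lines from $\nu(b)$ to~$p$, and morphisms $x_j\colon B\to X$ with $x_0=\nu$, $x_\kappa\equiv p$ and $x_j(b)\in L_j(b)\cap L_{j+1}(b)$ (a genuine morphism after shrinking, since distinct lines meet in one point and $B$ is a smooth curve).

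The heart of the argument is the ``successive ruled surface'' computation. For $i=1,\dots,\kappa$ let $S_i=B\times_{F_1(X)}\mathcal L$ be the $\PP^1$-bundle over~$B$ obtained by pulling back the universal line $\mathcal L\to F_1(X)$ along~$f_i$, with tautological morphism $u_i\colon S_i\to X$ that is an isomorphism from each fibre onto the corresponding line; thus $u_{i*}$ carries fibre classes to~$\ell$. The maps $b\mapsto x_{i-1}(b)$ and $b\mapsto x_i(b)$ define two sections $\tau_i,\tau_i'\colon B\to S_i$. Any two sections of a $\PP^1$-bundle over a curve differ by the pullback of a divisor on the base, so $[\tau_i]-[\tau_i']$ is a $\ZZ$-combination of fibres in $\CH_1(S_i)$, whence $u_{i*}[\tau_i]\equiv u_{i*}[\tau_i']\pmod{\ZZ\cdot\ell}$. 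On the other hand $u_i\circ\tau_i'$ and $u_{i+1}\circ\tau_{i+1}$ are the same morphism $x_i\colon B\to X$, so $u_{i*}[\tau_i']=u_{(i+1)*}[\tau_{i+1}]$. Chaining these relations from $i=1$ to $i=\kappa$ gives $u_{1*}[\tau_1]\equiv u_{\kappa*}[\tau_\kappa']\pmod{\ZZ\cdot\ell}$. But $u_{1*}[\tau_1]=\nu_*[B]=d_C\cdot[C]$, while $u_{\kappa*}[\tau_\kappa']$ is the pushforward of $[B]$ along the constant morphism $b\mapsto p$, hence vanishes. Therefore $d_C\cdot[C]\in\ZZ\cdot\ell$, which completes the proof.

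I expect the genuinely delicate point to be the uniformity of the bound $d_C\le m$: one must organise the auxiliary chain families so that the covering degree $\deg(\nu)$ stays bounded for all irreducible curves simultaneously — including curves sitting inside the loci where the most naive chain family degenerates — which is precisely what the stratification/Noetherian-induction step is for. Degenerate chains (a line repeated, or a ``line'' contracted to a point) need a word of care, but in each such case the corresponding step is either trivial or contributes a multiple of~$\ell$, so it does not affect the outcome.
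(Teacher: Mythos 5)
Your proposal is correct and is essentially the paper's own Tian--Zong ``successive ruled surfaces'' argument: a family of chains of at most four lines anchored at a fixed point (Lemma~\ref{lemma:FourLines}), combined with the fact that two sections of a $\PP^1$-bundle over a curve differ by fibre classes, which push forward to multiples of the common line class by Lemma~\ref{lem:F1IrredRC}, so that the relation propagates along the chain from the moving endpoint down to the constant one. The only genuine difference is how the uniform bound on $d_C$ is obtained: you stratify $X$ by Noetherian induction and generic flatness so that the chain family is finite (flat) of bounded degree over each stratum, whereas the paper glues the four $\PP^1$-bundles into a single total space $\scrL\to B$ surjecting onto~$X$, cuts it by general hyperplane sections to get a generically finite morphism $\scrL'\to X$ of some degree~$m$, sets $N=m!$, and for each irreducible curve $C$ extracts a multisection of degree $m'\leq m$ whose image in $B$ serves as the base curve. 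Your stratification is a legitimate, and arguably more careful, way to control the degree for curves sitting inside the loci where the naive family degenerates --- exactly the point you flag as delicate --- and your ``two sections differ by fibres'' step is the same computation the paper phrases as ``$\CH_1$ of a ruled surface is generated by a section together with the rulings''.
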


\begin{proof}
The following argument of ``successive ruled surfaces'' is taken from \cite[Proposition~3.1]{TianZong}, which is essentially \cite[Proposition IV.3.13.3]{KollarBookRationCurve}. We reproduce it here for the reader's convenience. Fix a point $x_0 \in X$. Let $I = \left\{(L,x) \in F_1(X) \times X \mid x\in L\right\}$ be the incidence variety, and let $I^{(2)} = I \times_{F_1(X)} I = \left\{(L,x_1,x_2) \in F_1(X) \times X \times X \mid x_1, x_2 \in L\right\}$. Let $e \colon I \to X$ and $e_1$, $e_2 \colon I^{(2)} \to X$ be the evaluation morphisms. Then
\[
B = e_1^{-1}\{x_0\}\; {}_{e_2}\!\times_{e_1} I^{(2)} {}_{e_2}\!\times_{e_1} I^{(2)} {}_{e_2}\!\times_{e} I
\]
is the scheme of tuples $(L_1,x_1,L_2,x_2,L_3,x_3,L_4)$ with $x_0 \in L_1$ and $x_j \in L_j\cap L_{j+1}$ for $j=1,2,3$. For $j=1,\ldots,4$, let
\[
\scrL_j = \left\{\left(\left(L_1,x_1,L_2,x_2,L_3,x_3,L_4\right),y\right) \in B \times X \bigm| y \in L_j \right\}.
\]
The first projection makes $\scrL_j$ a $\PP^1$-bundle over~$B$. The second projection gives an evaluation morphism $e_j \colon \scrL_j \to X$. Furthermore, we have sections
\[
\begin{tikzcd}
\scrL_1 \ar[d] & \scrL_2 \ar[d] & \scrL_3 \ar[d] & \scrL_4 \ar[d]\\
B \ar[u,bend left,"s_0"] \ar[u,bend right,"s_1^\prime"'] &
B \ar[u,bend left,"s_1"] \ar[u,bend right,"s_2^\prime"'] &
B \ar[u,bend left,"s_2"] \ar[u,bend right,"s_3^\prime"'] &
B \ar[u,bend left,"s_3"]
\end{tikzcd}
\]
where $s_j$ ($j=0,1,2,3$) is obtained by taking $y=x_j$, viewed as a point of~$L_{j+1}$, and $s_j^\prime$ ($j=1,2,3$) is obtained by taking $y=x_j$, viewed as a point of~$L_j$. Let $\pi \colon \scrL \to B$ be the scheme over~$B$ obtained by gluing $\scrL_j$ and~$\scrL_{j+1}$ along their sections~$s_j^\prime$ and~$s_j$. The fibres of~$\pi$ are chains of four lines connected at points. By construction, the morphisms~$e_j$ glue to an evaluation morphism $e \colon \scrL \to X$, which by Lemma~\ref{lemma:FourLines} is surjective. By taking  general successive hyperplane sections of $\scrL$, we obtain a generically finite morphism $\scrL'\to X$, whose degree is denoted by $m\in \ZZ_{>0}$. Set $N=m!$.
	
Without loss of generality, we may assume that $\alpha$ is the class of an irreducible curve $C \subset X$. Our goal is to show that $N[C]\in \CH_1(X)$ is a linear combination of classes of lines. We may assume $C$ is not itself a line. There exists an irreducible curve $\hat{C} \subset \scrL'$ such that $e$ restricts to a non-constant morphism $\hat{C} \to C$. Because $C$ is not a line, $\pi(\hat{C})$ is a curve. Let $B_0$ be the normalisation of this curve, and define $Y = B_0 \times_B \scrL$, which is a union of four ruled surfaces~$Y_j$ over~$B_0$, glued along sections. The normalisation of~$\hat{C}$ maps to~$Y$; let $\tilde{C} \subset Y$ be the image. By construction, the evaluation map $e \colon Y \to X$ gives a non-constant morphism $\tilde{C} \to C$, so that $e_*[\tilde{C}] = m'\cdot [C] \in \CH_1(X)$ for some $0<m'\leq m$.

\[
\begin{tikzpicture}
\coordinate (LA) at (0,8) {};
\coordinate (LB) at (0,7) {};
\coordinate (LC) at (0,6) {};
\coordinate (LD) at (0,5) {};
\coordinate (LE) at (0,4) {};
\coordinate (LF) at (0,3) {};
\coordinate (LG) at (0,2) {};
\coordinate (LH) at (0,1) {};
\coordinate (RA) at (4,8) {};
\coordinate (RB) at (4,7) {};
\coordinate (RC) at (4,6) {};
\coordinate (RD) at (4,5) {};
\coordinate (RE) at (4,4) {};
\coordinate (RF) at (4,3) {};
\coordinate (RG) at (4,2) {};
\coordinate (RH) at (4,1) {};
	
\path[save path=\compd] (LA) to[in=60,out=-60] (LC) to (RC) to[in=-60,out=60] (RA) to (LA);
\path[save path=\compc] (LB) to[in=60,out=-60] (LE) to (RE) to[in=-60,out=60] (RB) to (LB);
\path[save path=\compb] (LD) to[in=60,out=-60] (LG) to (RG) to[in=-60,out=60] (RD) to (LD);
\path[save path=\compa] (LF) to[in=60,out=-60] (LH) to (RH) to[in=-60,out=60] (RF) to (LF);
\fill[white][use path=\compa];
\fill[white][use path=\compb];
\fill[white][use path=\compc];
\fill[white][use path=\compd];
\draw[thick][use path=\compa];
\draw[thick][use path=\compb];
\draw[thick][use path=\compc];
\draw[thick][use path=\compd];

\path[name path=LAC] (LA) to[in=60,out=-60] (LC);
\path[name path=LBE] (LB) to[in=60,out=-60] (LE);
\path[name path=LDG] (LD) to[in=60,out=-60] (LG);
\path[name path=LFH] (LF) to[in=60,out=-60] (LH);
\path [name intersections={of=LAC and LBE}];
\coordinate (IBC) at (intersection-1);
\path [name intersections={of=LBE and LDG}];
\coordinate (IDE) at (intersection-1);
\path [name intersections={of=LDG and LFH}];
\coordinate (IFG) at (intersection-1);
	
\path[name path=RAC] (RA) to[in=60,out=-60] (RC);
\path[name path=RBE] (RB) to[in=60,out=-60] (RE);
\path[name path=RDG] (RD) to[in=60,out=-60] (RG);
\path[name path=RFH] (RF) to[in=60,out=-60] (RH);
\path [name intersections={of=RAC and RBE}];
\coordinate (JBC) at (intersection-1);
\path [name intersections={of=RBE and RDG}];
\coordinate (JDE) at (intersection-1);
\path [name intersections={of=RDG and RFH}];
\coordinate (JFG) at (intersection-1);

\path[name path=XX] (-1,1.25) to (5,1.4);
\path [name intersections={of=LFH and XX}];
\coordinate (LS) at (intersection-1);
\path [name intersections={of=RFH and XX}];
\coordinate (RS) at (intersection-1);
	
\path[save path=\Chat] (.5,5) .. controls (3.5,3) and (.5,1) .. (1.1,3.5) .. controls (2.25,7) and (4,2) .. (4.37,3);
	
\begin{scope}
\clip[use path=\compd];
\clip (IBC) rectangle (5,8);
\fill[white][use path=\compd];
\draw[thick,dotted][use path=\compc];
\draw[thick][use path=\compd];
\end{scope}
	
\begin{scope}
\clip[use path=\compc];
\clip (IDE) rectangle (JBC);
\fill[white][use path=\compc];
\draw[thick,dotted][use path=\compd];
\draw[thick,dotted][use path=\compb];
\draw[thick][use path=\compc];
\draw[thick,red,dotted][use path=\Chat];
\end{scope}
	
\begin{scope}
\clip[use path=\compa];
\fill[white][use path=\compa];
\draw[thick,dotted][use path=\compb];
\draw[thick][use path=\compa];
\draw[thick,red,dotted][use path=\Chat];
\end{scope}
	
\begin{scope}
\clip[use path=\compb];
\clip (IFG) rectangle ($(JDE)+(1,0)$);
\fill[white][use path=\compb];
\draw[thick,dotted][use path=\compc];
\draw[thick,dotted][use path=\compa];
\draw[thick][use path=\compb];
\draw[thick,red][use path=\Chat];
\end{scope}
	
\draw[thick] (LD) to[in=60,out=-60] (LG);
	
\draw[thick] (IBC) to[out=-5,in=185] (JBC);
\draw[thick] (IDE) to (JDE);
\draw[thick] (IFG) to (JFG);
\draw[thick] (LS) to[out=-5,in=185] (RS);
	
\node[left] at (LA) {$Y_4$};
\node[left] at (LB) {$Y_3$};
\node[left] at (LD) {$Y_2$};
\node[left] at (LF) {$Y_1$};
	
\node[above] at (3.5,6.4) {$s_3$};
\node[above] at (3.5,4.45) {$s_2$};
\node[above] at (3.5,2.4) {$s_1$};
\node[above] at (3.5,1.3) {$s_0$};
	
\node[above,red] at (2,3.4) {$\tilde{C}$};
	
\node[above] at (3.5,8) {$Y$};
\draw[->] (5.5,4.5) -- (6.5,4.5);
\node[above] at (6,4.5) {$e$};
\node at (7,4.5) {$X$};
	
\draw[->] (2.2,.5) -- (2.2,-.5);
\node at (4.5,-1) {$B_0$};
\draw[thick] (0,-1) to[out=10,in=175] (2,-1) to[out=-5,in=175] (4,-1);
\end{tikzpicture}
\]
	
In each component~$Y_j$ ($j=1,\ldots,4$), every ruling pushes forward to a line in~$X$. If $t$ is any section of~$Y_j$, then $\CH_1(Y_j)$ is generated by the class of $t(B_0)$ together with the rulings. Because $s_0$ comes from the constant section~$x_0$, we have $e_*\left[s_0(B_0)\right] = 0$. Therefore, $e_*\left[s_j(B_0)\right] \in \CH_1(X)$ lies in the subgroup spanned by the classes of lines, for each $j=1,\ldots,4$. As $\tilde{C}$ is contained in one of the components~$Y_j$, it follows that $m'\cdot [C] = e_*[\tilde{C}]$, hence $N\cdot [C]$, lies in this subgroup, which is what we wanted to prove.
\end{proof}

Now we are ready to compute the Chow group of 1-cycles of a GM sixfold.

\begin{proof}[Proof of Theorem~\ref{thm:ChowGroupGM6}\,\ref{thm:ChowGroupGM6-2}]
By Proposition~\ref{prop:CH1GM6BoundedTorsion}, there is an integer $N>0$ such that multiplication by $N$ kills $\CH_1(X)_{\alg}$. Combining this with the divisibility of $\CH_1(X)_{\alg}$, we conclude that $\CH_1(X)_{\alg} = 0$.
	
On the other hand, by the blow-up formula, the Griffiths group of $1$-cycles is a birational invariant for smooth projective varieties. Because $X$ is rational, it follows that $\Griff_1(X)=\Griff_1(\PP^6)=0$; hence, 
\[
\CH_1(X)_{\hom} = \CH_1(X)_{\alg} = 0.
\]
In other words, the cycle class map $\CH_1(X)\to H^{10}(X,\ZZ)$ is injective. The surjectivity follows from the fact that $H^{10}(X,\ZZ)$ is generated by the class of a line in $X$.
\end{proof}

Finally, we turn to part~\ref{thm:ChowGroupGM6-4} of Theorem~\ref{thm:ChowGroupGM6}. 

\begin{lemma}
\label{lem:DODGM6}
There exist an integer $n > 0$, a $($possibly reducible$)$ smooth projective variety~$T$ of dimension~$4$, a generically injective morphism $j\colon T\to X$,  and an algebraic cycle $Z\in \CH^4(X\times T)$, such that we have the following equality in $\CH^6(X\times X)$:
\begin{equation}
\label{eq:DODGM6}
n\cdot [\Delta_X] = n\cdot \left([\pt] \times [X]\right) + n\cdot \left([\operatorname{line}]\times H\right) + (\id_X \times j)_*(Z).
\end{equation}
\end{lemma}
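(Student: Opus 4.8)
The plan is to prove~\eqref{eq:DODGM6} by refining the Bloch--Srinivas decomposition of the diagonal in two stages, following the method of \cite{FuTian-CubicFivefold} and~\cite{ZhouLin} (which develops ideas of \cite{CTVoisin} and~\cite{Voisin-H4nr}). The input at the first stage is that $\CH_0(X)=\ZZ$, since $X$ is rationally connected; the input at the second stage is the triviality of $\CH_1(X)=\CH^5(X)$ already obtained in Theorem~\ref{thm:ChowGroupGM6}\ref{thm:ChowGroupGM6-2}, or rather its geometric core, Proposition~\ref{prop:CH1GM6BoundedTorsion}, which holds over an arbitrary algebraically closed field of characteristic~$0$. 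Concretely, the Bloch--Srinivas argument (\cite{BlochSrinivas}; see also \cite{MurreKTheory}) applied to $\CH_0(X)$ provides an integer $n_0>0$, a divisor $D_0\subset X$, and a cycle $\Gamma_0\in\CH^6(X\times X)$ supported on $X\times D_0$ with $n_0[\Delta_X]=n_0\bigl([\pt]\times[X]\bigr)+\Gamma_0$. Resolving the components of~$D_0$ (characteristic~$0$), I write $\Gamma_0=(\id_X\times g_0)_*\widetilde\Gamma_0$, where $g_0\colon\widetilde D_0\to X$ is a generically injective morphism from a smooth, possibly reducible, projective $5$-fold $\widetilde D_0=\bigsqcup_\alpha T_\alpha$, and $\widetilde\Gamma_0\in\CH^5(X\times\widetilde D_0)$.

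The crucial step is to push the support of $\widetilde\Gamma_0$ down one more dimension. For each component~$T_\alpha$, restricting $\widetilde\Gamma_0$ to the generic point of~$T_\alpha$ gives a $1$-cycle class $\gamma_\alpha$ on $X_{\CC(T_\alpha)}$, whose cohomology class is $c_\alpha\cdot[\operatorname{line}]$ for an integer~$c_\alpha$ (the ``degree'' of~$\widetilde\Gamma_0$ along a general fibre $X\times\{t\}$, $t\in T_\alpha$). Over $\overline{\CC(T_\alpha)}$, Proposition~\ref{prop:CH1GM6BoundedTorsion}---valid there because its proof is geometric---shows that $\gamma_\alpha-c_\alpha[\operatorname{line}]$, being homologically trivial, is annihilated by the universal integer~$N$ of that proposition; a transfer (norm) argument over a finite extension of $\CC(T_\alpha)$ witnessing this annihilation then produces a single integer $M>0$ with $M\gamma_\alpha=Mc_\alpha[\operatorname{line}]$ in $\CH_1\bigl(X_{\CC(T_\alpha)}\bigr)$ for every~$\alpha$. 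Spreading out, the cycle $\widetilde\Gamma_1:=M\widetilde\Gamma_0-\sum_\alpha Mc_\alpha\bigl([\operatorname{line}]\times[T_\alpha]\bigr)\in\CH^5(X\times\widetilde D_0)$ vanishes over a dense open subset of each~$T_\alpha$, hence is supported on $X\times D_1$ for a closed subset $D_1\subseteq\widetilde D_0$ with $\dim D_1\le 4$.

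Now apply $(\id_X\times g_0)_*$. Using $\CH^1(X)=\ZZ\cdot H$, we have $g_{0*}[T_\alpha]=e_\alpha H$ for integers~$e_\alpha$, so $(\id_X\times g_0)_*\bigl([\operatorname{line}]\times[T_\alpha]\bigr)=e_\alpha\bigl([\operatorname{line}]\times H\bigr)$, and we obtain
\[
n\cdot[\Delta_X]=n\cdot\bigl([\pt]\times[X]\bigr)+a\cdot\bigl([\operatorname{line}]\times H\bigr)+(\id_X\times j)_*(Z),
\]
where $n:=n_0M$, $a:=M\sum_\alpha c_\alpha e_\alpha$, the morphism $j\colon T\to X$ is generically injective from a smooth projective $4$-fold obtained by resolving $\overline{g_0(D_1)}$ (enlarged, if necessary, to a reduced $4$-dimensional subvariety of~$X$), and $Z\in\CH^4(X\times T)$ is a lift of the leftover cycle $(\id_X\times g_0)_*\widetilde\Gamma_1$. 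Finally, to see that $a=n$, push the displayed identity to cohomology and evaluate on $H^2(X,\ZZ)=\ZZ\cdot H$: as a self-correspondence of~$X$, $(\id_X\times j)_*(Z)$ factors on $H^k$ through $j_*\colon H^{k-4}(T)\to H^k(X)$, hence kills $H^0$ and $H^2$; $[\pt]\times[X]$ kills $H^{\ge 2}$; and $[\operatorname{line}]\times H$ acts as the identity on $H^2$ since $H\cdot[\operatorname{line}]=1$. Comparing with the identity action of $[\Delta_X]$ forces $a=n$, which is exactly~\eqref{eq:DODGM6}.

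I expect the passage from the first stage to the second---the displacement of the support from $X\times D_0$ ($\dim D_0=5$) to $X\times D_1$ ($\dim D_1=4$)---to be the main obstacle, as it requires controlling $1$-cycles on~$X$ over the \emph{non}-algebraically-closed field $\CC(T_\alpha)$: one needs both the triviality of $\CH_1$ over every algebraically closed field of characteristic~$0$ (Proposition~\ref{prop:CH1GM6BoundedTorsion}) and the descent from the geometric generic point to the generic point via norms. The remaining delicate point, that the three leading coefficients can be taken equal, is handled cleanly by the cohomological argument on $H^2$ above rather than by a direct computation of $\sum_\alpha c_\alpha e_\alpha$; the reducibility of~$\widetilde D_0$ and the invocations of resolution of singularities are routine.
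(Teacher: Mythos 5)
Your argument is correct, but it takes a more self-contained route than the paper, which simply invokes Laterveer's refined decomposition of the diagonal \cite[Theorem~1.7]{Laterveer-SmallChow} (applicable since $\CH_0(X)\cong\ZZ$ and $\CH_1(X)\cong\ZZ$) and then resolves the components of the codimension-$2$ support. What you have written out is essentially the proof of that cited result, specialised to a GM sixfold: the first Bloch--Srinivas stage from $\CH_0(X)=\ZZ$, then the second stage localising at the generic points of the components of the divisorial support and using the structure of $1$-cycles to push the support down to dimension~$4$. Your version has two small merits worth recording. First, you correctly isolate the minimal input for the second stage: only the rational-coefficient statement of Proposition~\ref{prop:CH1GM6BoundedTorsion} is needed, and it must hold over the algebraically closed fields $\overline{\CC(T_\alpha)}$ -- which it does, either because the proof by successive ruled surfaces is geometric or by the Lefschetz principle, since these fields have finite transcendence degree over~$\QQ$. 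Second, your normalisation of the middle coefficient (showing $a=n$ by letting both sides act on $H^2(X,\QQ)=\QQ\cdot H$ and using $H\cdot[\operatorname{line}]=1$) is a clean way to obtain the precise form~\eqref{eq:DODGM6} rather than an identity with an undetermined coefficient on $[\operatorname{line}]\times H$; the orientation of all correspondences checks out, since $[\pt]\times[X]$ and $(\id_X\times j)_*(Z)$ both annihilate $H^2(X,\QQ)$ for degree reasons. The remaining glossed-over points -- the norm/transfer descent from $\overline{\CC(T_\alpha)}$ to $\CC(T_\alpha)$, the lifting of cycles supported on singular subvarieties to their resolutions, and the padding of lower-dimensional components of the final support up to dimension~$4$ -- are handled at the same level of informality as in the paper's own proof and are genuinely routine.
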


\begin{proof}
Since we have already proven that $\CH_0(X)\cong \ZZ$ and $\CH_1(X)\cong \ZZ$, Laterveer's refined decomposition of the diagonal \cite[Theorem 1.7]{Laterveer-SmallChow} applies. This gives that there exists an integer $n > 0$ such that in $\CH^6(X\times X)$ we have a relation
\[
n\cdot [\Delta_X] = n\cdot \left([\pt]\times [X]\right) + n\cdot \left([\operatorname{line}]\times H\right) + Z^\prime,
\]
where $Z^\prime$ is supported on $X \times T^\prime$ for some closed algebraic subset $T^\prime \subset X$ of codimension~$2$. Now let $T$ be the disjoint union of resolutions of the irreducible components of~$T^\prime$, and let~$Z$ be an algebraic cycle in $X\times T$ which pushes forward to~$Z^\prime$.
\end{proof}

\begin{proposition}
\label{prop:Griff3GM6}
Algebraic and homological equivalence coincide for cycles of codimension~$3$ on a GM sixfold~$X$; i.e.~$\Griff^3(X)=0$.
\end{proposition}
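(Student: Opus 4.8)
The plan is to exploit the decomposition of the diagonal from Lemma~\ref{lem:DODGM6} via the usual correspondence calculus: acting on a homologically trivial codimension-$3$ cycle by $n\cdot[\Delta_X]$ just multiplies it by~$n$, and I will show that the right-hand side of~\eqref{eq:DODGM6} carries such a cycle into the subgroup of algebraically trivial cycles, the point being that the only non-trivial contribution factors through the Picard group of the fourfold~$T$, where homological and algebraic equivalence coincide.

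Concretely, fix $\alpha\in\CH^3(X)_{\hom}$ and apply each term of~\eqref{eq:DODGM6} to~$\alpha$, reading a class in $\CH^6(X\times X)$ as a self-correspondence of~$X$. The term $n\cdot\bigl([\pt]\times[X]\bigr)$ sends $\alpha$ to a multiple of $p_{2*}\bigl(p_1^*(\alpha\cdot[\pt])\bigr)$, which is zero because $\alpha\cdot[\pt]\in\CH^9(X)=0$; likewise $n\cdot\bigl([\operatorname{line}]\times H\bigr)$ sends $\alpha$ to a push-forward involving $\alpha\cdot[\operatorname{line}]\in\CH^8(X)=0$, hence also zero. Therefore $n\alpha=\bigl((\id_X\times j)_*Z\bigr)_*\alpha$, and by the projection formula along $\id_X\times j$ the right-hand side equals $j_*(Z_*\alpha)$, where $Z_*\colon\CH^3(X)\to\CH^1(T)$ is the correspondence induced by~$Z$. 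Since correspondences act compatibly with cycle-class maps and $\class(\alpha)=0$, the divisor $Z_*\alpha$ is homologically trivial on~$T$; but $T$ is a disjoint union of smooth projective varieties, so $\CH^1(T)_{\hom}=\Pic^0(T)(\CC)=\CH^1(T)_{\alg}$, and as $j_*$ preserves algebraic triviality we conclude $n\alpha=j_*(Z_*\alpha)\in\CH^3(X)_{\alg}$. Thus $n\cdot\Griff^3(X)=0$.

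The remaining task --- upgrading $n\cdot\Griff^3(X)=0$ to $\Griff^3(X)=0$ --- is the heart of the matter. Here I would first invoke that, since $\CH_0(X)$ is supported on a point, the Bloch--Srinivas--Murre machinery (the codimension-$3$ analogue of part~\ref{BS-3} of Theorem~\ref{thm:Bloch-Srinivas}) makes $\CH^3(X)_{\alg}$ weakly representable by an abelian subvariety of the intermediate Jacobian~$J^5(X)$; but the cohomology of a GM sixfold is of Tate type away from the middle degree, so $H^5(X)=0$, hence $J^5(X)=0$ and $\CH^3(X)_{\alg}=0$. Feeding this back into $n\alpha=j_*(Z_*\alpha)$ --- whose right-hand side lies in $\CH^3(X)_{\alg}=0$ --- shows that $\CH^3(X)_{\hom}$ is killed by~$n$. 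The final step, which I expect to be the main obstacle, is to kill this bounded torsion: I would use the Bloch (torsion Abel--Jacobi) map, which for each prime power~$\ell^m$ embeds $\CH^3(X)_{\hom}[\ell^m]$ into $H^5\bigl(X,\mu_{\ell^m}^{\otimes3}\bigr)$, and this target vanishes because $H^5(X,\ZZ)=0$ and $H^*(X,\ZZ)$ is torsion-free; granting injectivity of the Bloch map in this range we obtain $\CH^3(X)_{\hom}=0$, and in particular $\Griff^3(X)=0$.
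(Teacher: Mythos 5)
The first half of your argument --- letting both sides of~\eqref{eq:DODGM6} act on a homologically trivial class $\alpha$, killing the first two terms for dimension reasons, and using $\CH^1(T)_{\hom}=\CH^1(T)_{\alg}$ to conclude $n\alpha\in\CH^3(X)_{\alg}$, i.e.\ $n\cdot\Griff^3(X)=0$ --- is exactly the paper's argument and is fine. The problems are in the torsion-killing second half. First, a repairable issue: there is no ``codimension-$3$ analogue of part~\ref{BS-3} of Theorem~\ref{thm:Bloch-Srinivas}''; the Bloch--Srinivas/Murre weak representability statements are specific to codimension~$2$, and for $\CH^3$ the hypothesis that $\CH_0(X)$ is supported on a point gives nothing directly. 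The conclusion $\CH^3(X)_{\alg}=0$ is nevertheless true and is what the paper proves in Theorem~\ref{thm:ChowGroupGM6}\ref{thm:ChowGroupGM6-4}: one lets~\eqref{eq:DODGM6} act on $\CH^3(X)_{\alg}$, uses the compatibility of $Z_*$ with Abel--Jacobi maps and $J^5(X)=0$ to see that $n$ kills $\CH^3(X)_{\alg}$, and then invokes divisibility of $\CH^3(X)_{\alg}$. So this step can be patched with tools you already have in hand.

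The genuine gap is the final step, which you yourself flag as the main obstacle: the injectivity of the Bloch map $\CH^3(X)[\ell^m]\to H^5\bigl(X,\mu_{\ell^m}^{\otimes 3}\bigr)$ is \emph{not} a known result. Injectivity of $\lambda^i$ on torsion is known for $i\le 2$ (Kummer theory, Merkurjev--Suslin) and for $0$-cycles (Roitman); for codimension~$3$ on a sixfold it is an open problem, so it cannot be ``granted'' --- it is precisely the hard content of the proposition. The paper circumvents this by a different mechanism: the Bloch--Ogus coniveau spectral sequence, together with $H^5(X,\ZZ)=0$, yields an isomorphism $\Griff^3(X)\cong H^1(X,\scrH^4_\ZZ)$, and the Colliot-Th\'el\`ene--Voisin exact sequence (which rests on the Bloch--Kato conjecture) exhibits the $n$-torsion of $H^1(X,\scrH^4_\ZZ)$ as a quotient of the unramified cohomology group $H^4_{\nr}(X,\ZZ/n\ZZ)$, which vanishes because $X$ is rational. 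Some substitute of this kind --- exploiting rationality of the GM sixfold through unramified cohomology, as in \cite{CTVoisin} and \cite{Voisin-H4nr} --- is needed; without it your argument does not close.
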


\begin{proof}
We first show that $\Griff^3(X)$ is torsion. Let both sides of~\eqref{eq:DODGM6}, viewed as correspondences from~$X$ to itself, act on~$\Griff^3(X)$. For any $\alpha\in \Griff^3(X)$, the correspondence $n\cdot [\Delta_X]$ sends~$\alpha$ to $n\cdot \alpha$. The first two terms of the right-hand side of~\eqref{eq:DODGM6} send $\alpha$ to zero for dimension reasons, and the third term sends~$\alpha$ to $j_*\left(Z_*(\alpha)\right)$. Therefore, we have
\[
n\cdot \alpha = j_*(Z_*(\alpha))
\]
in $\Griff^3(X)$. However, $Z_*(\alpha)$ is an element of~$\Griff^1(T)$, which is trivial as homological equivalence and algebraic equivalence coincide for divisors. We conclude that $\Griff^3(X)$ is killed by~$n$.
	
It remains to show that $\Griff^3(X)$ is torsion-free. For any abelian group~$A$, let $\scrH^i_A$ be the Zariski sheaf associated to the presheaf $U \mapsto H^i(U,A)$. Bloch and Ogus, see~\cite{Bloch-Ogus}, showed that, starting from the $E_2$-page, the coniveau spectral sequence agrees with the Leray spectral sequence associated to the continuous map $X(\CC)\to X_{\operatorname{Zar}}$ and the constant sheaf~$A$. Therefore, we have 
\begin{equation}
\label{eq:SpectralSeq}
E_2^{p,q} = H^p\left(X,\scrH^q_A\right) \Longrightarrow N^pH^{p+q}(X, A),
\end{equation}
where $N^\bullet$ denotes the coniveau filtration.
	
We need two basic properties of this spectral sequence. 
\begin{itemize}
\item In \eqref{eq:SpectralSeq}, $E_2^{0, q}=H^0(X, \scrH^q_{A})$ is the so-called unramified cohomology $H^i_{\nr}(X,A)$, which is a birational invariant; see \cite[Theorem~2.8]{CTVoisin}. As $X$ is rational (see \cite[Proposition~4.2]{DK-GMClassification}), its unramified cohomology groups all vanish except in degree $0$; \textit{i.e.}~$E_2^{0,q} = 0$ for any $q>0$.
		
\item For $p>q$, we have $E_2^{p,q} = 0$. This is a consequence of the Gersten conjecture for homology theory proved by Bloch--Ogus; \textit{cf.}~\cite[Equation~(0.3)]{Bloch-Ogus}.
\end{itemize}
	
Using these properties, if we take $A=\ZZ$, \eqref{eq:SpectralSeq} gives rise to an exact sequence
\begin{equation}
\label{eq:LESfromSS}
0\tto H^5(X,\ZZ)/N^2H^5(X,\ZZ) \tto H^1\left(X,\scrH^4_\ZZ\right) \tto H^3\left(X,\scrH^3_\ZZ\right) \tto H^6(X, \ZZ),
\end{equation}
where $H^3(X,\scrH^3_\ZZ)$ is identified with the group of codimension~$3$ cycles modulo algebraic equivalence (see~\cite[Section~(0.5)]{Bloch-Ogus}) and the last arrow is the cycle class map. Therefore, the kernel of the last arrow is exactly the Griffiths group~$\Griff^3(X)$.
	
Since $H^5(X,\ZZ)=0$, the first term in \eqref{eq:LESfromSS} vanishes, and we obtain that 
\begin{equation}
\label{eq:Griff3=H1H4}
H^1\left(X,\scrH^4_\ZZ\right)\cong \Griff^3(X).
\end{equation}
Now we use a result of Colliot-Th\'el\`ene and Voisin \cite[Theorem 3.1]{CTVoisin}, which is based on the Bloch--Kato conjecture (proven by Voevodsky, see \cite{Voevodsky-IHES,Voevodsky-Annals}). This result implies that for any integer~$n$, we have a short exact sequence of Zariski sheaves
\[
0 \tto \scrH^4_\ZZ \xrightarrow{\;\cdot n\;} \scrH^4_\ZZ \tto \scrH^4_{\ZZ/n\ZZ} \tto 0.
\]
From the associated long exact sequence, we obtain a short exact sequence
\[
0 \tto H^0\left(X,\scrH^4_\ZZ\right)/n \tto H^0\left(X,\scrH^4_{\ZZ/n\ZZ}\right) \tto H^1\left(X,\scrH^4_\ZZ\right)[n] \tto 0,
\]
where the last term denotes the $n$-torsion subgroup of $H^1(X,\scrH^4_\ZZ)$. Now observe that the middle term is the unramified cohomology group $H^4_{\nr}(X,\ZZ/n\ZZ)$, which is trivial since $X$ is rational. It follows that $H^1(X, \scrH^4_\ZZ)$ has no $n$-torsion. On the other hand, we had already shown that $\Griff^3(X)$ is killed by~$n$; so $\Griff^3(X) = 0$.
\end{proof}

\begin{proof}[Proof of Theorem~\ref{thm:ChowGroupGM6}\,\ref{thm:ChowGroupGM6-4}]
Let both sides of \eqref{eq:DODGM6} act on $\CH^3(X)_{\alg}$. The same argument as in the first part of the proof of Proposition~\ref{prop:Griff3GM6} shows that the multiplication by~$n$ map on $\CH^3(X)_{\alg}$ factors through 
\[
Z_*\colon \CH^3(X)_{\alg}\tto \CH^1(T)_{\alg}.
\]
However, we have the commutative diagram
\begin{equation*}
\begin{tikzcd}
\CH^3(X)_{\alg}  \ar[r , "Z_*"]  \ar[d]& \CH^1(T)_{\alg} \ar[d, "\wr"]\\
J^5(X) \ar[r, "\class(Z)_*"]& \Pic^0(T)\rlap{,}
\end{tikzcd}
\end{equation*}
where the vertical arrows are Abel--Jacobi maps. Since the right vertical arrow is an isomorphism and $J^5(X)=0$ (since $H^5(X,\ZZ)=0$), the top arrow $Z_*\colon \CH^3(X)_{\alg}\to \CH^1(T)_{\alg}$ is zero. Hence $\CH^3(X)_{\alg}$ is killed by~$n$. On the other hand, $\CH^3(X)_{\alg}$ is a divisible group; hence $\CH^3(X)_{\alg} = 0$. Combining this with Proposition~\ref{prop:Griff3GM6}, we conclude that $\CH^3(X)_{\hom}=0$; \textit{i.e.}~the cycle class map is injective.
\end{proof}

\section{The generalised Hodge conjecture}
\label{sec:GHC}

We refer to \textit{the generalised Hodge conjecture} as the one proposed by Grothendieck in \cite{Grothendieck} as an amendment of Hodge's initial generalisation. The main result of this section is the following.

\begin{theorem}
\label{thm:GHC}
Let $X$ be a complex Gushel--Mukai variety. Then the generalised Hodge conjecture is true for~$X$.
\end{theorem}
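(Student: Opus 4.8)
The plan is to reduce the Generalised Hodge Conjecture (GHC) for a GM variety $X$ of dimension $n$ to the case where all the cohomology is of Tate type, together with the single genuinely interesting piece, namely the middle cohomology $H^n(X)$. Outside of the middle degree, $H^i(X,\QQ)$ is of Tate type, so the coniveau filtration is as large as the Hodge structure allows, and the GHC there follows from the fact that $X$ is rationally connected (hence $H^{2k}(X,\QQ)$ is spanned by classes of algebraic cycles, which have coniveau $k$) combined with the Lefschetz-type results of Debarre--Kuznetsov describing the cohomology ring of $X$. So the crux is $H^n(X,\QQ)$.

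For $n$ odd ($n=3,5$), the middle cohomology $H^n(X,\QQ)$ corresponds to the intermediate Jacobian, which is a $10$-dimensional abelian variety; the GHC for $H^n(X)$ is then equivalent to the (geometric) coniveau statement that $H^n(X,\QQ)$ has coniveau $\geq (n-1)/2$, and this follows from the Bloch--Srinivas-type results together with the computations of Chow groups recalled in Section~\ref{sec:ChowGroupGM} — indeed for GM threefolds this is Theorem~\ref{thm:ChowGroupGM3}, and for fivefolds it follows from Zhou's Theorem~\ref{thm:ChowGroupGM5}, via the general principle that $\CH_0(X)$ supported on a point forces the maximal coniveau on the relevant cohomology. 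For $n$ even ($n=4,6$), $H^n(X,\QQ)$ is a Hodge structure of K3 type with Hodge numbers $1$-$22$-$1$. Here the non-trivial Hodge-coniveau prediction of the GHC is vacuous on the transcendental part (a K3-type Hodge structure has coniveau $0$ and the GHC predicts nothing beyond the ordinary Hodge conjecture on its Hodge classes), so GHC for $H^n(X)$ reduces to the ordinary Hodge conjecture for the Hodge classes in $H^n(X,\QQ)$. For $n=4$ this is known by Perry's integral Hodge conjecture for GM fourfolds (cited in the excerpt), and for $n=6$ it follows from $\CH^3(X)_{\hom}=0$ — i.e.\ Theorem~\ref{thm:ChowGroupGM6}\ref{thm:ChowGroupGM6-4} — which shows $\CH^3(X)\to H^6(X,\ZZ)(3)$ is injective with image of finite index in the Hodge classes, hence the Hodge conjecture with $\QQ$-coefficients holds in middle degree.

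Concretely I would organise the argument as: (i) recall from Debarre--Kuznetsov that $H^i(X,\QQ)$ is of Tate type for $i\neq n$, and dispatch the GHC in those degrees using rational connectedness and the hard Lefschetz / primitive decomposition; (ii) treat $n=3$ using Theorem~\ref{thm:ChowGroupGM3}, observing that $\CH^2(X)_{\alg}\cong J(\CC)$ and $\CH_0(X)$ supported on a point gives coniveau $1$ on $H^3(X,\QQ)$, which is exactly what the GHC demands; (iii) treat $n=5$ the same way via Theorem~\ref{thm:ChowGroupGM5}; (iv) treat $n=4$ by invoking Perry's result; (v) treat $n=6$ by invoking $\CH^3(X)_{\hom}=0$ from Theorem~\ref{thm:ChowGroupGM6}\ref{thm:ChowGroupGM6-4}. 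Much of this, as the introduction notes, is already in Laterveer's work, so in the write-up I would likely cite Laterveer for (i)--(iv) and emphasise that the only new input is (v), the sixfold case, which is an immediate consequence of the Chow-group computation just proved.

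The main obstacle is conceptual rather than technical: one must recognise that for K3-type middle cohomology the GHC carries no coniveau content beyond the usual Hodge conjecture, so that the apparently hard even-dimensional cases collapse to a statement about algebraicity of Hodge classes — which has already been secured (Perry for $n=4$; the vanishing of $\Griff^3$ and of $\CH^3(X)_{\hom}$ for $n=6$). Once that reduction is made explicit, no further work is needed; the potential pitfall is simply to make sure the coniveau bookkeeping in the odd-dimensional cases matches the GHC's prediction exactly, which it does because $\CH_0(X)$ is supported on a point.
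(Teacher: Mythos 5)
Your case division matches the paper's, but there is a genuine error in your treatment of the even-dimensional cases. You assert that for $n\in\{4,6\}$ the K3-type middle cohomology has Hodge coniveau $0$, so that the GHC reduces to the ordinary Hodge conjecture on $(\tfrac{n}{2},\tfrac{n}{2})$-classes. This is false: the Hodge numbers $1$-$22$-$1$ sit in bidegrees $(\tfrac{n}{2}+1,\tfrac{n}{2}-1)$, $(\tfrac{n}{2},\tfrac{n}{2})$, $(\tfrac{n}{2}-1,\tfrac{n}{2}+1)$, so $H^{n,0}(X)=\dots=H^{\frac{n}{2}+2,\frac{n}{2}-2}(X)=0$ and $H^n(X,\QQ)$ has Hodge coniveau $\tfrac{n}{2}-1$. (Your picture is correct for a K3 surface, where the structure sits in degree $2$, but not once it is shifted into degree $4$ or~$6$.) The GHC therefore demands that $H^n(X,\QQ)$ be supported on a closed subset of codimension $\tfrac{n}{2}-1$ --- a divisor for fourfolds, a codimension-$2$ subset for sixfolds --- which is a genuine coniveau statement, not a consequence of the algebraicity of Hodge classes; it is exactly this statement that the paper later uses to deduce the Generalised Tate Conjecture.

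Consequently your ingredients for $n=4$ and $n=6$ are the wrong ones. For $n=4$ the required coniveau-$1$ bound does follow from $\CH_0(X)=\ZZ$ via Bloch--Srinivas (this is what \cite[Corollary~2.5(ii)]{Laterveer-SmallChow} records), so Perry's integral Hodge conjecture is not needed and does not suffice. For $n=6$ your argument breaks down twice: first, coniveau $\geq 2$ on $H^6$ requires a refined decomposition of the diagonal using that \emph{both} $\CH_0(X)\cong\ZZ$ and $\CH_1(X)_\QQ\cong\QQ$ (Proposition~\ref{prop:CH1GM6BoundedTorsion}) are small --- this is how the paper proceeds, via \cite[Proposition~2.4(ii)]{Laterveer-SmallChow}; second, even the ordinary Hodge conjecture for $H^6$ does not follow from $\CH^3(X)_{\hom}=0$, since injectivity of the cycle class map says nothing about its image exhausting the Hodge classes (the remark after Theorem~\ref{thm:ChowGroupGM6} derives the finite-index statement \emph{from} the Hodge conjecture, not conversely). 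A milder version of the same slip occurs for $n=5$: $\CH_0$ supported on a point only yields coniveau $\geq 1$ on $H^5$, whereas the GHC requires coniveau $2$; the extra input is that $\CH_1(X)_\QQ$ is also of rank one (part of Zhou's theorem), fed into the same refined-diagonal machinery.
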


\begin{proof}
  Most of this can be extracted from the literature together with the computation of the Chow groups of~$X$. For GM varieties of dimension $3$ or~$4$, the result is proven in \cite[Corollary~2.5(ii)]{Laterveer-SmallChow}. For GM varieties of dimension~$5$, the result can be found in \cite[Remark~3.2]{Laterveer-GM5} (which refines a result by Nagel in~\cite{Nagel-GHC}). For GM varieties of dimension~$6$, Theorem~\ref{thm:GHC} follows from Theorem~\ref{thm:ChowGroupGM6} (or rather Proposition~\ref{prop:CH1GM6BoundedTorsion}) together with \cite[Proposition~2.4(ii)]{Laterveer-SmallChow}.
\end{proof}

\begin{remark}
\label{rem:HCGM6alt}
Both in \cite[Remark~2.26]{KP16} and in \cite[Remark~4.2]{Debarre-GM}, it is stated that the (usual) Hodge conjecture for GM sixfolds can be proven using the results of~\cite{DK-Kyoto}, but no details are provided. While it is clear how to proceed for general~$X$ (see below), we have not been able to make this method work for all GM sixfolds.
	
First assume that $X$ is a GM variety of dimension~$6$ which is general, in the sense that condition~(10) in \cite[Section~5.1]{DK-Kyoto} is satisfied. As in~\cite{DK-Kyoto}, let $F^\sigma_2(X)$ be the Hilbert scheme of $\sigma$-planes in~$X$, and let $\scrL^\sigma_2(X)$ be the universal plane over~$F^\sigma_2(X)$. We then have a diagram
\[
\begin{tikzcd}
& \scrL_2^\sigma(X) \ar[dl,"q"']\ar[dr,"p"]\\
X && F^\sigma_2(X) \ar[d,"\tilde\sigma"]\\
&& \widetilde{Y}_{A,V_5} \ar[r,hook,"\iota"] & \widetilde{Y}_A\rlap{,}
\end{tikzcd}
\]
where $\widetilde{Y}_{A,V_5} \subset \widetilde{Y}_A$ is a subvariety of codimension~$1$ and $\tilde\sigma \colon F^\sigma_2(X) \to \widetilde{Y}_{A,V_5}$ is a $\PP^1$-bundle. By \cite[Corollary~5.13]{DK-Kyoto}, $F^\sigma_2(X)$ is a non-singular fourfold, and hence $\scrL^\sigma_2(X)$ is non-singular of dimension~$6$.
	
Let $[\mathbf{P}] \in H^6\left(F^\sigma_2(X),\QQ\right)$ denote the class of a fibre of~$\tilde\sigma$, and define $H^2\left(F^\sigma_2(X),\QQ\right)_0 = [\mathbf{P}]^\perp \subset H^2\left(F^\sigma_2(X),\QQ\right)$. By \cite[Proposition~5.14]{DK-Kyoto}, $(\iota\circ \tilde\sigma)^*$ gives an isomorphism
\[
H^2\left(\widetilde{Y}_A,\QQ\right) \isomlongarrow H^2\left(F^\sigma_2(X),\QQ\right)_0.
\]
Let $H^2(\widetilde{Y}_A,\QQ)_0 \subset H^2(\widetilde{Y}_A,\QQ)$ be the primitive subspace, and write $H^2\left(F^\sigma_2(X),\QQ\right)_{00}$ for its image under $(\iota\circ \tilde\sigma)^*$. By \cite[Theorem~5.19]{DK-Kyoto}, we then have a commutative diagram
\[
\begin{tikzcd}
H^2\left(\widetilde{Y}_A,\QQ\right) \ar[r,"\sim","~~(\iota\circ \tilde\sigma)^*~~"']& H^2\left(F^\sigma_2(X),\QQ\right)_0\\
H^2\left(\widetilde{Y}_A,\QQ\right)_0 \ar[r,"\sim"] \arrow[symbol]{u}{\bigcup} & H^2\left(F^\sigma_2(X),\QQ\right)_{00}  \arrow[symbol]{u}{\bigcup} & H^6(X,\QQ)_{00}\rlap{.} \ar[l,"\sim"]\\[-16pt]
& p_*\left(q^*(z)\right) & z \ar[l,maps to]
\end{tikzcd}
\]
To deduce the Hodge conjecture for~$X$, it now suffices to show that there exists a class $\xi \in \CH^6\left(X \times F^\sigma_2(X)\right)$ such that the inverse isomorphism $H^2\left(F^\sigma_2(X),\QQ\right)_{00} \isomarrow H^6(X,\QQ)_{00}$ is given by $y \mapsto \pr_{X,*}\left(\class(\xi) \cup \pr_{F}^*(y)\right)$, where $\pr_X \colon X \times F^\sigma_2(X) \to X$ and $\pr_F \colon X \times F^\sigma_2(X) \to F^\sigma_2(X)$ are the projections. This is proven as follows.
	
Let $h \in H^2(X,\QQ)$ be the class of the very ample line bundle $H = \scrO_X(1)$. As explained before Theorem~5.19 in \cite[Section~5]{DK-Kyoto}, $p\colon \scrL_2^\sigma(X) \to F^\sigma_2(X)$ is a $\PP^2$-bundle for which $q^*(h)$ is a relative hyperplane class. Hence there is a vector bundle~$\scrE$ of rank~$3$ on~$F^\sigma_2(X)$ such that $\scrL_2^\sigma(X) \cong \PP(\scrE)$. As $q\colon \scrL_2^\sigma(X) \to X$ is generically finite of degree~$12$ (\textit{cf.} \cite[Lemma~5.15]{DK-Kyoto}), it follows from the proof of \cite[Theorem~5.19]{DK-Kyoto} that the class
\[
\xi = \tfrac{1}{12} \cdot \left[q^*(h^2) + p^*(c_1(\scrE))\cdot q^*(h) + p^*(c_2(\scrE))\right]
\]
(viewed as a class in $\CH^6\left(X \times F^\sigma_2(X)\right)$) has the required property. This completes the argument in case $X$ is general.
	
If $X$ is arbitrary, then, by considering a family of GM varieties, we can still show that there exists a class $\gamma \in \CH_4(\widetilde{Y}_A \times X)$ that induces an isomorphism $H^2\left(\widetilde{Y}_A,\QQ(1)\right)_0 \isomarrow H_6\left(X,\QQ(-3)\right)_{00}$ given by $z\mapsto \pr_{2,*}\left(\pr_1^*(z) \cap [\gamma]\right)$. (The reader will hopefully be able to guess the meaning of the notation.) However, it is not clear to us if the Hodge conjecture in \emph{cohomological} degree~$2$ is true for the variety~$\widetilde{Y}_A$, as in general this variety is singular.
\end{remark}

\section{The Mumford--Tate conjecture and the generalised Tate conjecture}
\label{sec:MTC+GTC}

In this section we first recall the statements of the Mumford--Tate conjecture and the generalised Tate conjecture (in characteristic~$0$). The main result that we prove is that these conjectures are true for Gushel--Mukai varieties of even dimension. We deduce this from a theorem of Andr\'e; see~\cite{Andre-ShafTate}. This argument uses that, as shown in the previous section, the Hodge conjecture is true for these varieties.

In all of this section, $\ell$ is a fixed (but arbitrary) prime number.

\subsection{}
\label{subsec:MTC}
As explained in \cite[Section~1]{Moonen-TCMTC}, the Mumford--Tate conjecture and the Tate conjecture can be viewed as statements about complex algebraic varieties, and we will take that perspective.

Let  $X$ be a complete non-singular algebraic variety over~$\CC$. Fix an integer~$i$, and write~$H_\ell$ for $H^{2i}\left(X,\QQ_\ell(i)\right)$. Choose any subfield $F \subset \CC$ that is finitely generated over~$\QQ$ and a model~$X_F$ of~$X$ over~$F$. As $H_\ell$ is canonically isomorphic to $H^{2i}\left(X_{\Fbar},\QQ_\ell(i)\right)$, the choice of the model~$X_F$ gives us a continuous Galois representation
\[
\rho_\ell \colon \Gal\left(\Fbar/F\right) \tto \GL(H_\ell).
\]
Define $G_\ell$ to be the Zariski closure of the image of~$\rho_\ell$, and let $G_\ell^0$ denote its identity component. The algebraic subgroup $G_\ell^0 \subset \GL\left(H_\ell\right)$ only depends on~$X$ and is independent of the choice of~$F$ and~$X_F$; see \cite[Proposition~1.3]{Moonen-TCMTC}.

Let $H_{\Betti} = H^{2i}\left(X(\CC),\QQ(i)\right)$, and let $G_{\Betti} \subset \GL(H_{\Betti})$ be its Mumford--Tate group. (Here ``$\Betti$'' is for ``Betti realisation''.) Artin's comparison isomorphism $H_{\Betti} \otimes \QQ_\ell \isomarrow H_\ell$ induces an isomorphism of algebraic groups $\GL(H_{\Betti}) \otimes \QQ_\ell \isomarrow \GL(H_\ell)$. The \emph{Mumford--Tate conjecture} (for the chosen $X$, $\ell$, and~$i$) is the assertion that this isomorphism restricts to an isomorphism
\[
G_{\Betti} \otimes \QQ_\ell \xrightarrow[?]{\;\lowsim\;} G_\ell^0.
\]

\subsection{}
With notation as above, a class $\xi \in H_\ell$ is called a Tate class if it is fixed under the action of~$G_\ell^0$; this is equivalent to the condition that the stabiliser of~$\xi$ in $\Gal(\Fbar/F)$ is an open subgroup. All elements in the image of the cycle class map $\class_\ell \colon \CH^i(X) \otimes \QQ_\ell \to H_\ell$ are Tate classes.

The \emph{Tate conjecture} (again: for the chosen $X$, $\ell$, and~$i$) says that the algebraic group~$G_\ell^0$ is reductive and that the $\ell$-adic cycle class map
\begin{equation}
\label{eq:classl}
\class_\ell \colon \CH^i(X) \otimes \QQ_\ell \tto \{\text{Tate classes in~$H_\ell$}\}
\end{equation}
is surjective.

\subsection{}
\label{subsec:GTC}
Retaining the above notation and assumptions, let $W \subset H_\ell = H^{2i}\left(X,\QQ_\ell(i)\right)$ be a $G_\ell^0$-subrepresentation. If $r$ is a natural number, $W$ is said to be of Tate coniveau at least~$r$ if there exist a normal domain $R\subset \CC$ which is of finite type over~$\ZZ$ and a smooth proper model $X_R \to \Spec(R)$ of~$X$ over~$R$, such that
\begin{enumerate}[label=(\alph*)]
\item\label{coniv2} over the fraction field of~$R$, the group $G_\ell$ as above is connected;

\item\label{coniv3} at every closed point~$x$ of $\Spec\left(R[1/\ell]\right)$, all eigenvalues of the Frobenius at~$x$ acting on $W(r-i)$ and on~$W^\vee(r-i)$ are algebraic integers.
\end{enumerate}
(For any $X_R/R$ as above, condition~\ref{coniv2} is satisfied after replacing~$R$ by a finite extension.)

If $Z \subset X$ is a closed subscheme of which all components have codimension at least~$r$, then the kernel $\Ker\left(H_\ell \to H^{2i}((X\setminus Z),\QQ_\ell(i)) \right)$ is a $G_\ell^0$-subrepresentation of Tate coniveau at least~$r$.

The \emph{generalised Tate conjecture} (which goes back to Grothendieck, see \cite[Section~10.3]{Brauer3}) states that, conversely, every $G_\ell^0$-subrepresentation $W \subset H_\ell$ of Tate coniveau at least~$r$ is supported on a subscheme $Z \subset X$ whose components have codimension at least~$r$, in the sense that $W \subset \Ker\left(H_\ell \to H^{2i}((X\setminus Z),\QQ_\ell(i)) \right)$. (For $r=i$, this says that the $\ell$-adic cycle class map~\eqref{eq:classl} is surjective.)

\begin{remark}
The Mumford--Tate conjecture and the (generalised) Tate conjecture are usually formulated as statements for varieties over number fields or finitely generated fields. The conjectures as formulated above (for $X$ over~$\CC$) are true if and only if the analogous conjectures over arbitrary finitely generated fields (of characteristic~$0$) are true.
\end{remark}

\begin{theorem}
\label{thm:MTCTC}
Let $X$ be a complex Gushel--Mukai variety of even dimension. Then the Mumford--Tate conjecture and the generalised Tate conjecture for~$X$ are true.
\end{theorem}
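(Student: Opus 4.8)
The plan is to reduce both conjectures to a single application of Andr\'e's theorem \cite{Andre-ShafTate} on the Mumford--Tate conjecture for varieties whose interesting cohomology is of K3 type. The key structural input is that for a GM variety $X$ of even dimension $n$, the cohomology $H^i(X,\QQ)$ is of Tate type for $i \neq n$, while $H^n(X,\QQ)$ has Hodge numbers $1$-$22$-$1$, i.e.\ the primitive part $H^n(X,\QQ)_{\mathrm{prim}}$ is a polarised Hodge structure of K3 type. So the only nontrivial part of the Mumford--Tate and Tate conjectures concerns this one weight-$n$ summand, and after a Tate twist by $n/2$ we may treat it as a weight-$2$ Hodge structure of K3 type.

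\emph{First} I would dispose of the Tate-type summands: for $i\neq n$ the Hodge structure $H^i(X,\QQ)$ is a sum of Tate twists of $\QQ$, both $G_{\Betti}$ and $G_\ell^0$ act through the cyclotomic character (they are the relevant torus $\mathbb{G}_m$), and the Mumford--Tate conjecture for these pieces is trivial; the Tate conjecture for them follows from the Hodge conjecture (Theorem~\ref{thm:GHC}) together with the fact that for a variety of Tate type in a given degree, Hodge classes, absolute Hodge classes and Tate classes all coincide, and the cycle class map is surjective onto them because $X$ is rationally connected and its cohomology in these degrees is generated by algebraic classes (this is visible from the computation of $H^*(X,\ZZ)$ via $\gamma^*$ from the Grassmannian in the relevant cases, cf.\ the proof of Theorem~\ref{thm:ChowGroupGM6}). \emph{Next}, for the K3-type piece $V = H^n(X,\QQ)_{\mathrm{prim}}(n/2)$, I would verify the hypotheses of Andr\'e's criterion. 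The decisive one is that $X$ deforms in a smooth projective family over a base $S$ whose associated period map to the appropriate period domain $\Omega = \SO(2,20)/(\SO(2)\times \SO(20))$ is dominant, i.e.\ the image contains a Euclidean-open subset; this is exactly what Debarre--Kuznetsov establish for GM fourfolds and sixfolds via their period domains for EPW sextics and the associated Hodge theory (and, in the fourfold case, via the surjectivity of the period map onto the relevant locally symmetric variety). The remaining hypotheses — that the generic Mumford--Tate group is the full orthogonal group of the Hodge-theoretic quadratic form, that the motive is abelian or at least that the Hodge conjecture holds — are then either automatic from the dominant-period condition or supplied by Theorem~\ref{thm:GHC}. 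Applying \cite{Andre-ShafTate} yields the Mumford--Tate conjecture for $V$, hence for $H^n(X)$, hence for all of $H^*(X)$, which is assertion~(ii) of the Theorem.

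\emph{Finally}, I would deduce the Generalised Tate conjecture formally from the Generalised Hodge conjecture (Theorem~\ref{thm:GHC}) together with the Mumford--Tate conjecture just proved. The mechanism is the standard one: the Mumford--Tate conjecture identifies $G_\ell^0$ with $G_{\Betti}\otimes\QQ_\ell$, so $G_\ell^0$-subrepresentations $W \subset H^{2i}(X,\QQ_\ell(i))$ correspond bijectively to rational sub-Hodge structures $W_{\Betti} \subset H^{2i}(X,\QQ(i))$, and one checks that the Tate coniveau condition on $W$ (condition~\ref{coniv3} on Frobenius eigenvalues at closed points, via the Weil conjectures and a spreading-out argument) matches the Hodge coniveau condition on $W_{\Betti}$ — this is where one invokes that for GM varieties the relevant comparison between the arithmetic notion of coniveau and the Hodge-theoretic one holds because all the Hodge structures in play are known (Tate type or K3 type), so one can compare eigenvalue valuations degree by degree. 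Then the GHC says $W_{\Betti}$ is supported on a closed subscheme $Z\subset X$ of codimension $\geq r$, and by the comparison isomorphism and compatibility of cycle classes the same $Z$ supports $W$, giving assertion~(iii).

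\emph{The hard part} is the verification that the family of GM varieties of even dimension has dominant period map into the period domain for its K3-type Hodge structure — this is not something one proves here but rather extracts precisely from the work of Debarre--Kuznetsov, and some care is needed to match their normalisation of the relevant lattice and period domain (for fourfolds, the $22$-dimensional transcendental lattice attached to the double EPW sextic; for sixfolds, the analogous statement) with the one required by Andr\'e's hypotheses, and to confirm that the generic member indeed has maximal (orthogonal) Mumford--Tate group rather than something smaller. A secondary subtlety is the eigenvalue/coniveau comparison in the deduction of the GTC, where one must be slightly careful about the difference between geometric and Tate coniveau, but since every Hodge structure occurring is either Tate or of K3 type with known endomorphisms, this reduces to an elementary check.
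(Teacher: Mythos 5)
Your proposal is correct and follows essentially the same route as the paper: reduce to the K3-type middle cohomology, verify Andr\'e's conditions $\mathrm{A}_i$ and $\mathrm{B}_i^+$ using the explicit tautological family, the Debarre--Kuznetsov period map, and the Hodge conjecture from Theorem~\ref{thm:GHC}, and then deduce the Generalised Tate Conjecture formally from the GHC together with the (Mumford--)Tate conjecture. The only point where the paper is more explicit than your sketch is the coniveau-$i$ case of the GTC, where it spells out the Kronecker/Chebotarev argument showing that algebraic-integer Frobenius eigenvalues of weight zero on $W$ and $W^\vee$ force $W$ to consist of Tate classes.
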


To avoid any confusion: the assertion is that the mentioned conjectures are true for all~$i$ and~$\ell$. However, this is non-trivial only for the cohomology in middle degree ($i=\frac{\dim(X)}{2}$), and in the proof the choice of~$\ell$ plays no particular role. Let us further note that for K3 surfaces, the Mumford--Tate and Tate conjectures were proven in~\cite{Andre-ShafTate}.

\begin{proof}
Let $h \in H^2\left(X,\ZZ(1)\right)$ be the class of the ample generator of~$\Pic(X)$. Let $i = \frac{\dim(X)}{2}$. We need to show that $(X,h)$ satisfies conditions $\mathrm{A}_i$ and~$\mathrm{B}_i^+$ as in \cite[Section~1.4]{Andre-ShafTate}; if this is true, then the Tate conjecture and the Mumford--Tate conjecture follow from \cite[Theorems~1.5.1 and~1.6.1(4)]{Andre-ShafTate}. (Note that these results are stated over number fields, but the proofs are valid over arbitrary finitely generated base fields of characteristic~$0$, and as just remarked, this is what we need. Further, for one step in the proof, details are missing in~\cite{Andre-ShafTate}; this is corrected in~\cite[Section~2]{Moonen-TCMTC}.)
	
It is clear that condition~$\mathrm{A}_i$, which says that the middle cohomology should be of K3 type, is satisfied. The conditions~$\mathrm{B}_i^+$ state that $(X,h)$ should be a fibre in a connected family of polarised varieties such that the image of the period map contains an open subset of the period domain, and (condition~$\mathrm{B}^+_i\text{(iv)}$) such that the Hodge conjecture in middle degree is true for the fibres in this family. To see that these conditions are satisfied, fix a $5$-dimensional $\CC$-vector space~$V_5$ and abbreviate $T = \CC \oplus \wedge^2 V_5$. If $n=4$, let $S$ to be the open subscheme of $\Grass(9,T) \times \PP\left(\Sym^2(T^\vee)\right)$ consisting of the pairs $(W,Q)$ such that $\CGr(2,V_5) \cap \PP(W) \cap Q$ is a non-singular GM fourfold, and let $f \colon Y \to S$ be the tautological family of GM fourfolds. Similarly, for $n=6$ we take $S$ to be the open subscheme of quadrics $Q \in \PP\left(\Sym^2(T^\vee)\right)$ such that $\CGr(2,V_5) \cap Q$ is non-singular of dimension~$6$, and again we naturally have a family of GM varieties $f \colon Y \to S$ (\textit{cf.}\ the proof of \cite[Proposition~A.2]{KP16}.) In either case it is clear that there exists a point $0 \in S(\CC)$ such that the fibre~$Y_0$ is the GM variety~$X$ of the theorem and that the polarisation class~$H$ extends to a section of $R^2f_*\ZZ(1)$. By what is explained in \cite[Section~4]{Debarre-GM}, condition~$\mathrm{B}_i\text{(iii)}$ is satisfied, and condition~$\mathrm{B}^+_i\text{(iv)}$ follows from Theorem~\ref{thm:GHC}.
	
For the generalised Tate conjecture, finally, we have shown in the previous section (as part of the generalised Hodge conjecture) that the cohomology in middle degree (\textit{i.e.}~degree~$2i$) is supported on a subscheme of codimension~$i-1$. 
Suppose $W \subset H^{2i}\left(X,\QQ_\ell(i)\right)$ has coniveau at least~$i$. Choose a model $X_R/R$ as in Section~\ref{subsec:GTC} such that the image of the representation
\[
\rho\colon \pi_1\left(\Spec(R)\right)\tto \GL(W)
\]
is contained in the group of automorphisms of $W \cap H^{2i}\left(X,\ZZ_\ell(i)\right)$ that are the identity modulo~$\ell^3$, which is a torsion-free group (see for instance \cite[Theorems 4.5 and~5.2]{DixonEtAl}). Then by the hypothesis on the coniveau of~$W$, all eigenvalues of Frobenii on~$W$ and on~$W^\vee$ are algebraic integers, and because $W$ is pure of weight~$0$, these eigenvalues have norm~$1$ at all infinite places. Therefore, all eigenvalues of Frobenii are roots of unity by Kronecker's theorem. As the image of~$\rho$ is torsion-free and the Frobenius conjugacy classes are dense in $\pi_1\left(\Spec(R)\right)$, it follows that $W$ consists of Tate classes. By the Tate conjecture, $W$ is supported on a subscheme of codimension~$i$, and we are done.
\end{proof}

\begin{remark}
By \cite[Theorem~1.5.1]{Andre-ShafTate}, the motive (in the sense of Andr\'e) of an even-dimensional Gushel--Mukai variety~$X$ is an abelian motive; therefore, the Mumford--Tate group~$G_{\Betti}$ of $H(X,\QQ)$ equals the motivic Galois group of the Andr\'e motive $\motH(X)$. This gives a strengthening of the Mumford--Tate conjecture that is usually referred to as the ``motivated Mumford--Tate conjecture''.
	
Note that we do not know if the Chow motive of~$X$ is an abelian motive. (This is much stronger than saying that its Andr\'e motive is abelian.)
\end{remark}

\section{Chow--K\"unneth decompositions and their refinements}
\label{sec:ChowMotiveGM}

For later use, we recall some basic results on Chow--K\"unneth decompositions of (the Chow motives of) Gushel--Mukai varieties. We first work over the complex numbers; at the end of the section, we explain how to extend this to GM varieties over an arbitrary algebraically closed field of characteristic~$0$.

Let $X$ be a GM $n$-fold. The Gushel map $\gamma\colon X \to \Grass(2,V_5)$ induces a homomorphism $\gamma^* \colon H^*\left(\Grass(2,V_5),\QQ\right)$ $\to H^*(X,\QQ)$, which is surjective in all degrees different from~$n$. The cohomology of~$X$ in even degrees different from~$n$ is therefore purely of Tate type, spanned by classes of algebraic cycles, and the cohomology in odd degrees different from~$n$ is zero.

We first consider a complex GM variety~$X$ of dimension $n \in \{3,5\}$. Its intermediate Jacobian $J = J^n(X)$ (in middle degree) is a $10$-dimensional abelian variety. We refer to \cite{DK-GMJacobian} for a detailed study of~$J$.

\begin{proposition}
\label{prop:CKforGM3}
Let $X$ be a complex GM variety of dimension $n=3$ or~$5$, with intermediate Jacobian $J = J^n(X)$. Then $\frh(X)$ is an abelian Chow motive, and there is a Chow--K\"unneth decomposition
\begin{equation}
\label{eq:motGM3dec}
\text{for $n=3$:}\qquad	
\frh(X) \cong \unitmot \oplus \unitmot(-1) \oplus \frh^1(J)\left(-1\right) \oplus \unitmot(-2) \oplus \unitmot(-3),
\end{equation}
resp.\
\begin{equation}
\label{eq:motGM5dec}
\text{for $n=5$:}\qquad	
\frh(X) \cong \unitmot \oplus \unitmot(-1) \oplus \unitmot(-2)^2 \oplus \frh^1(J)\left(-2\right) \oplus \unitmot(-3)^2 \oplus \unitmot(-4) \oplus \unitmot(-5).
\end{equation}
\end{proposition}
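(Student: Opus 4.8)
The plan is to derive both isomorphisms from Vial's Theorem~\ref{thm:Vial-MotDecomp}, so the first --- and essentially only substantive --- task is to verify its hypothesis: for a GM variety~$X$ of dimension $n\in\{3,5\}$ the Abel--Jacobi map $\CH^i(X)_{\QQ,\hom}\to J^{2i-1}(X)(\CC)\otimes\QQ$ is injective for every~$i$. For $n=3$ this is read off from Theorem~\ref{thm:ChowGroupGM3}: in codimensions $i=0,1,3$ the cycle class map identifies $\CH^i(X)$ with~$\ZZ$, so $\CH^i(X)_{\QQ,\hom}=0$; in codimension~$2$ we have $\Griff^2(X)=0$ and the Abel--Jacobi map $\CH^2(X)_{\alg}\isomarrow J(\CC)$ is an isomorphism, hence injective after $\otimes\QQ$; and $J^1(X)=J^5(X)=0$ since $H^1(X,\QQ)=H^5(X,\QQ)=0$. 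For $n=5$ the very same argument applies using Theorem~\ref{thm:ChowGroupGM5} (Zhou): $\CH^i(X)_{\QQ,\hom}=0$ for $i\in\{0,1,2,4,5\}$, the codimension~$3$ case is handled by $\Griff^3(X)=0$ together with the Abel--Jacobi isomorphism onto~$J(\CC)$, and the only non-vanishing intermediate Jacobian is $J^5(X)=J$.

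With the hypothesis in hand, Theorem~\ref{thm:Vial-MotDecomp} gives that $J=J^n(X)$ is an abelian variety, that $\frh(X)$ is an abelian Chow motive, and that there is a Chow--K\"unneth decomposition with $\frh^{2i}(X)\cong H^{2i}(X,\QQ)\otimes\unitmot(-i)$ and $\frh^{2i-1}(X)\cong\frh_1\bigl(J^{2i-1}(X)\bigr)(-i)$. To identify the summands I would use that the Gushel map induces a surjection $\gamma^*\colon H^*\bigl(\Grass(2,V_5),\QQ\bigr)\to H^*(X,\QQ)$ in all degrees~$\neq n$, so that every even cohomology group of~$X$ away from degree~$n$ is of Tate type and every odd cohomology group away from degree~$n$ vanishes; feeding in the values of $H^{2i}(X,\ZZ)$ from Theorems~\ref{thm:ChowGroupGM3} and~\ref{thm:ChowGroupGM5} (for $n=5$ one may instead use Poincar\'e duality and $\CH^2(X)\cong\ZZ^2$), the even pieces become $\unitmot,\unitmot(-1),\unitmot(-2),\unitmot(-3)$ when $n=3$ and $\unitmot,\unitmot(-1),\unitmot(-2)^2,\unitmot(-3)^2,\unitmot(-4),\unitmot(-5)$ when $n=5$, while the unique non-zero odd piece is $\frh^n(X)\cong\frh_1(J)\bigl(-\tfrac{n+1}{2}\bigr)$.

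The last step is cosmetic: rewrite $\frh_1(J)\bigl(-\tfrac{n+1}{2}\bigr)$ in the announced form. The intermediate Jacobian of a GM threefold (resp.\ fivefold) carries its canonical principal polarization, so $J\cong\widehat{J}$; combining this with the standard identification $\frh^1(\widehat{J})\cong\frh^1(J)^\vee(-1)$ coming from the Poincar\'e bundle, one finds $\frh_1(J)=\frh^1(J)^\vee\cong\frh^1(J)(1)$, whence $\frh_1(J)\bigl(-\tfrac{n+1}{2}\bigr)\cong\frh^1(J)\bigl(-\tfrac{n-1}{2}\bigr)$, which is $\frh^1(J)(-1)$ for $n=3$ and $\frh^1(J)(-2)$ for $n=5$. (One can sidestep the polarization and invoke only that an abelian variety is isogenous to its dual, which already yields $\frh^1(J)\cong\frh^1(\widehat{J})$ in $\CHM(\CC)$.) Assembling the even and odd pieces produces~\eqref{eq:motGM3dec} and~\eqref{eq:motGM5dec}.

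There is no deep obstacle here: the heavy lifting is done by Theorem~\ref{thm:Vial-MotDecomp} and by the Chow group computations of the previous section. The only point that requires genuine attention is the verification of the Abel--Jacobi hypothesis in \emph{every} codimension --- for $n=5$ this leans on Zhou's thesis --- after which it is a matter of tracking cohomology ranks and performing the formal twist $\frh_1(J)(-i)\cong\frh^1(J)(-(i-1))$.
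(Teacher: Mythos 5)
Your proposal is correct and follows exactly the route of the paper, whose proof of Proposition~\ref{prop:CKforGM3} consists precisely of citing Theorems~\ref{thm:ChowGroupGM3} and~\ref{thm:ChowGroupGM5} together with Vial's Theorem~\ref{thm:Vial-MotDecomp}. Your write-up simply makes explicit the two points the paper leaves implicit: the codimension-by-codimension verification of the Abel--Jacobi injectivity hypothesis, and the standard identification $\frh_1(J)\cong\frh^1(J)(1)$ (via $J$ being isogenous to its dual) needed to put the odd summand in the stated form.
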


\begin{proof}
This follows from Theorems~\ref{thm:ChowGroupGM3} and~\ref{thm:ChowGroupGM5}, together with Theorem~\ref{thm:Vial-MotDecomp}.
\end{proof}

\begin{remark}
It was shown by Laterveer, see~\cite{Laterveer-GM5}, that for $\dim(X) = 5$, there exists a decomposition as above which is multiplicative (see Section~\ref{subsec:CKDec}). For $\dim(X) = 3$, the corresponding result is not yet known.
\end{remark}

The above decompositions can be made explicit, as follows.

\subsection{}
\label{subsec:CKGM3explicit}
Let $X$ be a complex GM threefold. Let $H = -K_X \in \CH^1(X)_\QQ$ be the class of the ample generator of the Picard group, and write $\pt = \frac{1}{10}\cdot H^3$ for the class of a point on~$X$. (By Theorem~\ref{thm:ChowGroupGM3}\ref{thm:ChowGroupGM3-3}, all points on~$X$ are rationally equivalent.) We have a self-dual Chow--K\"unneth decomposition $[\Delta_X] = \sum_{i=0}^6\; \pi_X^i$ in $\CH^3(X \times X)_\QQ$, given by
\begin{align*}
\pi_X^0 &= \pt \times X,  & \pi_X^6 &= X \times \pt,\\
\pi_X^1 &= 0,   & \pi_X^5 &= 0,\\
\pi_X^2 &= \tfrac{1}{10} \cdot H^2 \times H,   & \pi_X^4 &= \tfrac{1}{10} \cdot H \times H^2,
\end{align*}
and
\[
\pi_X^3 = [\Delta_X] - \pi_X^0 - \pi_X^2 - \pi_X^4 - \pi_X^6.
\]
These projectors realise a decomposition as in~\eqref{eq:motGM3dec}; this follows from Theorem~\ref{thm:Conservative} (which applies because $\frh(X)$ is an abelian motive), where we use that the category $\Mot_{\num}(\CC)$ is semisimple (as proven by Jannsen in~\cite{Jannsen}) and that in cohomology the above projectors~$\pi_X^i$ cut out $H^i(X,\QQ)$.

\subsection{}
\label{subsec:CKGM5explicit}
Next consider a complex GM fivefold~$X$. Let $H = -\frac{1}{3}\cdot K_X \in \CH^1(X)_\QQ$ be the class of the ample generator of the Picard group, and write $\pt = \frac{1}{10}\cdot H^5$ for the class of a point on~$X$. Let $\sigma_{i,j} \in \CH^{i+j}\left(\Grass(2,V_5)\right)$ (for $3\geq i\geq j\geq 0$) be the Schubert classes.

The classes $e_1 = H^2$ and $e_2 = \gamma^*(\sigma_{1,1}) = c_2(\scrU_X)$ form a $\QQ$-basis of $\CH^2(X)_\QQ$. Define $f_1 = \frac{1}{2}\cdot H^3 - He_2$ and $f_2 = -H^3 + \frac{5}{2}\cdot He_2$ in $\CH^3(X)_\QQ$. By using the intersection matrix
\[
\begin{array}{c|cc}
& e_2 & H^2\\
\hline
H e_2 & 2 & 4\\
H^3 & 4 & 10\\
\end{array}
\]
we find that $\deg(e_i \cdot f_j) = \delta_{i,j}$. Hence we obtain a self-dual collection of mutually orthogonal projectors by setting $\pi_X^{2i-1} = 0$ for $i\neq 3$,
\begin{align*}
\pi_X^0 &= \pt \times X,  & \pi_X^{10} &= X \times \pt,\\
\pi_X^2 &= \tfrac{1}{10} \cdot H^4 \times H,  & \pi_X^8 &= \tfrac{1}{10} \cdot H \times H^4,\\
\pi_X^4 &= f_1 \times e_1 + f_2 \times e_2, & \pi_X^6 &= e_1 \times f_1 + e_2 \times f_2,
\end{align*}
and $\pi_X^5 = [\Delta_X] - \sum_{i\neq 5}\, \pi_X^i$. As before, this gives a Chow--K\"unneth decomposition.

\subsection{}
\label{subsec:CKGM46explicit}
Next we turn to GM varieties of even dimension $n \in \{4,6\}$. Let $H = -\frac{1}{n-2}\cdot K_X \in \CH^1(X)$ be the class of the ample generator of the Picard group, and write $\pt = \frac{1}{10}\cdot H^n$ for the class of a point on~$X$. (By the results in Section~\ref{sec:ChowGroupGM}, all points are rationally equivalent.) We have a self-dual Chow--K\"unneth decomposition $[\Delta_X] = \sum_{i=0}^{2n}\; \pi_X^i$ in $\CH^n(X \times X)_\QQ$ with $\pi_X^i = 0$ for all odd integers~$i$. For $n=4$, the projectors in even degree are given by
\begin{align*}
\pi_X^0 &= \pt \times X,  & \pi_X^8 &= X \times \pt,\\
\pi_X^2 &= \tfrac{1}{10} \cdot H^3 \times H,  & \pi_X^6, &= \tfrac{1}{10} \cdot H \times H^3,
\end{align*}
and
\[
\pi_X^4= [\Delta_X] - \pi_X^0 - \pi_X^2 - \pi_X^6 - \pi_X^8.
\]
These projectors realise a decomposition (for $\dim(X)=4$)
\begin{equation}
\label{eq:CKDecGM4}
\frh(X) \cong \unitmot \oplus \unitmot(-1) \oplus \frh^4(X) \oplus \unitmot(-3) \oplus \unitmot(-4).
\end{equation}

For $\dim(X) = n = 6$, the classes
\[
e_1 = H^2 ,\qquad e_2 = \gamma^*(\sigma_{1,1}) = c_2(\scrU_X)
\]
form a basis of $\CH^2(X)_\QQ$. Define the classes
\[
f_1 = \tfrac{1}{2}\cdot H^4 - H^2 e_2,\qquad f_2 = -H^4 + \tfrac{5}{2}\cdot H^2 e_2
\]
in $\CH^4(X)_\QQ$. Then $\deg(e_i \cdot f_j) = \delta_{i,j}$, and the even Chow--K\"unneth projectors are given by
\begin{align*}
\pi_X^0 &= \pt \times X,  & \pi_X^{12} &= X \times \pt,\\
\pi_X^2 &= \tfrac{1}{10} \cdot H^5 \times H,  & \pi_X^{10} &= \tfrac{1}{10} \cdot H \times H^5,\\
\pi_X^4 &= f_1 \times e_1 + f_2 \times e_2, & \pi_X^8 &= e_1 \times f_1 + e_2 \times f_2,
\end{align*}
and
\[
\pi_X^6= [\Delta_X] - \pi_X^0 - \pi_X^2 - \pi_X^4  - \pi_X^8 - \pi_X^{10}-\pi_X^{12}.
\]
These projectors realise a decomposition (for $\dim(X)=6$)
\begin{equation}
\label{eq:CKDecGM6}
\frh(X) \cong \unitmot \oplus \unitmot(-1)  \oplus \unitmot(-2)^{\oplus 2} \oplus \frh^6(X) \oplus \unitmot(-4)^{\oplus 2}  \oplus \unitmot(-5) \oplus \unitmot(-6).
\end{equation}

\subsection{}
\label{subsec:hntr}
For later use, we will also need a refinement of the Chow--K\"unneth decomposition in the even-dimensional case (\textit{cf.} \cite[Section~7.2.2]{KahnMurPedr}.)

Let $\dim(X) = n \in \{4,6\}$. By the results in Section~\ref{sec:ChowGroupGM}, the cycle class map induces an isomorphism of~$\CH^{n/2}(X)_\QQ$ with the space of Hodge classes in $H^n(X,\QQ)$. Let $\{a_1, \ldots, a_r\}$ be an orthogonal basis of $\CH^{n/2}(X)_\QQ$, and define 
\[
\pi^n_{X,\alg} = \sum_{i=1}^{r}\; \frac{1}{\deg\left(a_i^2\right)}\cdot a_i\times a_i,\qquad
\pi^n_{X,\tr} = \pi^n_X - \pi^n_{X,\alg}.
\]
These are projectors, which are independent of the chosen orthogonal basis. In cohomology, $\pi^n_{X,\alg}$ and $\pi^n_{X,\tr}$ are the projectors onto the subspace of Hodge classes in $H^4(X, \QQ)$, respectively its orthogonal complement.

Set $\frh^n_{\alg}(X) := (X,\pi^n_{X,\alg},0)$ and $\frh^n_{\tr}(X) := (X, \pi^n_{X,\tr},0)$. Then $\frh^n_{\alg}(X)\cong \unitmot(-n/2)^{\oplus r}$, and we have a refinement of \eqref{eq:CKDecGM4} and \eqref{eq:CKDecGM6} by further decomposing~$\frh^n(X)$ as 
\[
\frh^n(X) = \frh^n_{\alg}(X) \oplus \frh^n_{\tr}(X).
\]

\begin{remark}
Thus far in this section we have been working over~$\CC$. We in fact have Chow--K\"unneth decompositions as above for Gushel--Mukai varieties over an arbitrary field $K = \Kbar$ of characteristic~$0$. The best way to approach this would be to prove all results from Section~\ref{sec:ChowGroupGM} (at least with $\QQ$-coefficients) over such fields. As some of the results that we have used are documented in the literature only for complex varieties, here we  take a more direct approach.

Let $X$ be a GM variety of dimension $n \in \{3,4,5,6\}$ over~$K$. It is clear from the above explicit description of the Chow--K\"unneth projectors~$\pi^i_X$ that these same projectors are meaningfully defined over~$K$. (All we have used is the existence of the Gushel map; with a little more work, even the assumption $K=\Kbar$ could be dropped.) Taking the explicit formulas in Sections~\ref{subsec:CKGM3explicit}--\ref{subsec:CKGM46explicit} as definitions of the projectors~$\pi^i_X$, we obtain a Chow--K\"unneth decomposition $\frh(X) = \oplus_{i=0}^{2n}\; \frh^i(X)$ in~$\CHM(K)$.

These projectors are invariant under extension of the base field: if $K \subset L$ is a field extension, the Chow--K\"unneth projectors $\pi^i_{X_L} \in \CH^n(X_L \times_L X_L)$ that we obtain are the images of the projectors $\pi^i_X \in \CH^n(X\times X)$ under the natural map $\CH^n(X\times X) \to \CH^n(X_L \times_L X_L)$. Hence also $\frh^i(X_L) = \frh^i(X)_L$ for all~$i$. 
\end{remark}

\section{Motives of generalised partners and duals}

Throughout this section, $K$ is an algebraically closed field of characteristic~$0$, and $n$, $n'$ are two integers in $\{3,4,5,6\}$ of the same parity.

\begin{definition}[\textit{cf.} \protect{\cite[Definition 3.5]{KP16}}]
\label{def:Partner}
Let $X$ and $X'$ be GM varieties over~$K$ of dimensions $n$ and~$n'$, respectively. We say that $X$ and $X'$ are
\begin{itemize}
\item \emph{generalised partners} if there exists an isomorphism $V_6(X)\cong V_6(X')$ inducing an identification between the Lagrangian subspaces $A(X)\subset \wedge^3 V_6(X)$ and $A(X')\subset \wedge^3 V_6(X')$;
		
\item \emph{generalised dual} if there exists an isomorphism $V_6(X)\cong V_6(X')^\vee$ inducing an identification between the Lagrangian subspaces $A(X)\subset \wedge^3 V_6(X)$ and $A(X')^\perp\subset \wedge^3 V_6(X')^\vee$.
\end{itemize}
\end{definition}

For $n$ and~$n'$ odd, it follows from \cite[Theorem~1.1]{DK-GMJacobian} that if $X$ and~$X'$ are generalised partners or duals over~$\CC$, their middle cohomology groups are isomorphic as rational Hodge structures, up to a Tate twist. With some work, a similar conclusion can be proven for $n$ and~$n'$ even. The main result of this section is a motivic strengthening of this.

\begin{theorem}
\label{thm:MotivePartnerChar0}
Let $n$ and $n'$ be as above. Let $X$ and $X'$ be GM varieties of dimensions $n$ and~$n'$ over~$K$. If\, $X$ and~$X'$ are generalised partners or generalised duals, their rational Chow motives in middle degrees are isomorphic; i.e.
\begin{equation}
\label{eqn:IsomMotiveChar0}
\frh^n(X)\cong \frh^{n'}(X')\left(\tfrac{n'-n}{2}\right) \quad\text{in $\CHM(K)$.}
\end{equation}
\end{theorem}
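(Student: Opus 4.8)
The strategy is to deduce the isomorphism of middle Chow motives from the equivalence of Kuznetsov components established by Kuznetsov and Perry \cite{KuznetsovPerry-CatCone}, combined with the Chow--K\"unneth decompositions recorded in Section~\ref{sec:ChowMotiveGM}. Since both $X$ and $X'$ are defined over $K = \Kbar$ with $\charact(K) = 0$, the first reduction I would make is to the case $K = \CC$: by the Lefschetz principle (spreading out over a finitely generated subfield and specialising, or embedding $K$ into $\CC$ after choosing a common field of definition) it suffices to prove \eqref{eqn:IsomMotiveChar0} over~$\CC$, because the Chow--K\"unneth projectors are invariant under base field extension, as noted in the remark at the end of Section~\ref{sec:ChowMotiveGM}, and an isomorphism of Chow motives over $\CC$ descends. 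Actually one must be a bit careful here: the better approach, following the same philosophy, is to realise the sought isomorphism by an explicit correspondence and then argue it is defined over~$K$; I will return to this point.

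The heart of the argument is the following. For a GM variety $X$ of dimension $n$, there is a semiorthogonal decomposition $\Db(X) = \langle \Ku(X), \scrO_X, \ldots \rangle$ whose non-trivial component $\Ku(X)$ is, in the terminology of Kuznetsov and Perry, a ``GM category'', and which for $n$ even is a (twisted) K3 category while for $n$ odd is an Enriques-type category. The result \cite[Theorem~1.6]{KuznetsovPerry-CatCone} says that if $X$ and $X'$ are generalised partners or duals, then $\Ku(X) \simeq \Ku(X')$ as $\CC$-linear triangulated categories (even Fourier--Mukai). The plan is: (1) from this Fourier--Mukai equivalence extract a kernel $\mathcal{E} \in \Db(X \times X')$; (2) its Mukai-vector/Chern-character gives a correspondence $\Gamma \in \CH^*(X \times X')_\QQ$ which, composed on both sides with the Chow--K\"unneth projectors $\pi^n_X$ and $\pi^{n'}_{X'}$ and with the appropriate Tate twist, induces a morphism $\phi \colon \frh^n(X) \to \frh^{n'}(X')(\tfrac{n'-n}{2})$; (3) show $\phi$ is an isomorphism. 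For step (3), because $\frh^n(X)$ for $n$ even is of the form $\unitmot(-n/2)^{\oplus r} \oplus \frh^n_{\tr}(X)$ with $\frh^n_{\tr}(X)$ an abelian motive (Theorem~\ref{thm:Vial-MotDecomp} applies in the odd case, and in the even case the motive is abelian by the argument via André's results / the fact that $H^n$ is of K3 type — in any case one reduces to checking the map is an isomorphism on numerical motives using Theorem~\ref{thm:Conservative}), it suffices by Lemma~\ref{lem:ManinPrinc} (or directly by conservativity on $\Mot_{\num}$) to check that $\phi$ induces an isomorphism on cohomology, i.e.\ that the Fourier--Mukai equivalence induces a Hodge isometry between the ``K3-type'' pieces of $H^n(X)$ and $H^{n'}(X')$. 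This last fact is exactly the cohomological shadow of \cite{KuznetsovPerry-CatCone}: the equivalence $\Ku(X) \simeq \Ku(X')$ induces an isomorphism of their topological K-theories / Mukai Hodge structures, which matches the transcendental parts of the middle cohomologies up to the Tate twist by $\tfrac{n'-n}{2}$ (degree shifts are absorbed because the Hochschild homology of $\Ku(X)$ sits in the middle). One then also needs the algebraic parts $\frh^n_{\alg}$ to match: but $\frh^n_{\alg}(X) \cong \unitmot(-n/2)^{\oplus r}$ and $\frh^{n'}_{\alg}(X') \cong \unitmot(-n'/2)^{\oplus r'}$ with $r = r'$ (the spaces of Hodge classes have the same dimension — for $n$ even this is $1+\rho$ where the K3-type structure contributes finitely; one checks $r = r'$ directly from the parity and the structure of $H^n$), so the Tate summands are abstractly isomorphic after twist, and one can either fold them into the correspondence or treat them separately.

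The main obstacle, I expect, is \textbf{step (2)--(3) executed integrally over $K$ rather than over $\CC$, and making the cohomological input precise in the even case}. Over $\CC$, the passage from a Fourier--Mukai equivalence of Kuznetsov components to an isomorphism of Chow motives of the whole varieties — as opposed to just an abstract isomorphism of Hodge structures — requires knowing that the equivalence is induced by an algebraic cycle and that the induced map respects the motivic decomposition; this is where one leans on the theory of Chow motives of (twisted) K3 categories and on the fact, available here, that $\frh^n_{\tr}(X)$ is an abelian motive so that numerical and homological (and, via Lemma~\ref{lem:ManinPrinc}, rational) equivalence can be played off against each other. For the descent to general $K = \Kbar$: one chooses a finitely generated subfield $k_0 \subset K$ over which $X$, $X'$, the Lagrangian data, and the Kuznetsov equivalence are all defined (the equivalence of \cite{KuznetsovPerry-CatCone} is constructed functorially from the linear-algebra data, hence is defined over $k_0$), produces the correspondence $\Gamma$ over $k_0$, embeds $k_0 \hookrightarrow \CC$ to verify it is an isomorphism of motives there, and concludes by conservativity / the fact that an isomorphism of Chow motives over $k_0$ base-changes to one over $K$. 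The genuinely delicate bookkeeping is matching the Tate twists and the semiorthogonal ``orthogonal'' summands $\unitmot(-j)$ on both sides so that after excising them one is left with precisely $\frh^n(X)$ versus $\frh^{n'}(X')(\tfrac{n'-n}{2})$; this is routine but must be done carefully using the explicit decompositions \eqref{eq:CKDecGM4}, \eqref{eq:CKDecGM6}, \eqref{eq:motGM3dec}, \eqref{eq:motGM5dec} in Section~\ref{sec:ChowMotiveGM}.
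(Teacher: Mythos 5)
Your overall architecture (reduce to $\CC$, extract a Fourier--Mukai kernel from the Kuznetsov--Perry equivalence, take its Mukai vector as a correspondence, sandwich it between the Chow--K\"unneth projectors, then descend to general $K$) matches the paper. But your step (3) has a genuine gap, and it occurs in the most important case. You propose to verify that $\phi$ is an isomorphism \emph{in cohomology} and then upgrade to an isomorphism of Chow motives either via Lemma~\ref{lem:ManinPrinc} or via conservativity of $\CHM(k)^{\ab}\to\Mot_{\num}(k)$ (Theorem~\ref{thm:Conservative}). Neither route works as you describe. Lemma~\ref{lem:ManinPrinc} requires the induced map on \emph{Chow groups} to be an isomorphism, not the map on cohomology; these are very different hypotheses, and checking cohomology alone gives you nothing for that lemma. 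The conservativity route requires $\frh^n_{\tr}(X)$ to be an abelian (or at least finite-dimensional) Chow motive, which is known for $n$ odd via Theorem~\ref{thm:Vial-MotDecomp}, but is \emph{not} known for $n$ even: the remark at the end of Section~\ref{sec:MTC+GTC} states explicitly that only the Andr\'e motive of an even-dimensional GM variety is known to be abelian, and that abelianness of the Chow motive is a much stronger, open statement. Your parenthetical ``in the even case the motive is abelian by the argument via Andr\'e's results'' conflates these two notions.

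The missing idea, which is the actual content of the paper's proof, is a mechanism to show that the correspondence induces an isomorphism on \emph{Chow groups}, after which Lemma~\ref{lem:ManinPrinc} applies with no finite-dimensionality input. The paper does this by introducing the additive invariant $\A(\mathcal{C})=\ker\bigl(K_0(\mathcal{C})_\QQ\to K_0^{\topo}(\mathcal{C})_\QQ\bigr)$. Additivity kills the exceptional summands, so $\A(\Ku(X))\cong\A(X)$, and the Mukai vector identifies $\A(X)$ with $\CH^*(X)_{\hom,\QQ}$; the categorical equivalence therefore forces $v(\scrE)_*\colon\CH^*(X)_{\hom,\QQ}\isomarrow\CH^*(X')_{\hom,\QQ}$. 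Combined with the computations of Section~\ref{sec:ChowGroupGM}, which show $\CH(X)_{\hom,\QQ}=\CH\bigl(\frh^n_{\tr}(X)\bigr)_\QQ$ sits in a single codimension, this gives exactly the hypothesis of Lemma~\ref{lem:ManinPrinc} for $\pi^{n'}_{X',\tr}\circ v_m(\scrE)\circ\pi^n_{X,\tr}$. Your handling of the algebraic summands also runs backwards: the equality $r=r'$ is deduced \emph{after} the isomorphism of transcendental parts is in hand, by comparing Hodge realisations and middle Betti numbers, rather than being checked a priori. Your descent to general $K=\Kbar$ is essentially the paper's argument and is fine.
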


Our proof will employ techniques from derived categories and is based on an idea that we learned from a draft version of the paper~\cite{BolognesiLaterveer-GM6} by Bolognesi and Laterveer.

\subsection{}
The derived category of a GM variety~$X$ admits a semi-orthogonal decomposition consisting of an exceptional collection together with an admissible subcategory $\operatorname{Ku}(X)$, called its \emph{Kuznetsov component}, which is a K3 or Enriques category depending on the parity of the dimension of the GM variety. 
More precisely,
\begin{equation}
\label{SODforGM}
\Db(X) = \left\langle \Ku(X), \scrO_X, \scrU_X^\vee, \scrO_X(1), \scrU_X^\vee(1), \dots, \scrO_X(n-3), \scrU_X^\vee(n-3) \right\rangle,
\end{equation}
where $\scrO_X(1)$ is the ample generator of the Picard group of~$X$ and $\scrU_X$ is the Gushel bundle. We denote by $i\colon \Ku(X)\hookrightarrow \Db(X)$ the natural inclusion functor and let $i^*$ and $i^!$ be, respectively, the left and right adjoints of $i$, which may be viewed as projection functors from $\Db(X)$ to the Kuznetsov component. We refer to the work of Kuznetsov--Perry~\cite{KP16} for details.

The following result is deduced from the so-called quadratic homological projective duality.

\begin{theorem}[{Kuznetsov--Perry, \textit{cf.} \cite{KuznetsovPerry-CatCone}}]
\label{thm:DerCatPartnerChar0}
Let $X$ and $X'$ be GM varieties of dimension $n$ and~$n'$ over~$K$. If\, $X$ and~$X'$ are generalised partners or generalised duals, there exists a Fourier--Mukai equivalence	
\begin{equation}
\Psi\colon \Ku(X)\xrightarrow{~\mathrm{eq}~} \Ku(X')
\end{equation}
between their Kuznetsov components.
\end{theorem}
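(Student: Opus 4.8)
The plan is to prove the theorem via homological projective duality (HPD), reducing the Kuznetsov component $\Ku(X)$ to a category built purely from the pair $(V_6,A)$ in the Lagrangian data, and then reading off the two cases. The guiding principle is that intersecting the cone $\CGr(2,V_5)$ with a linear space and a quadric corresponds, on the categorical side, to a linear section of a fixed HPD-dual category cut out precisely by the Lagrangian $A\subset \wedge^3 V_6$, so that $\Ku(X)$ ends up independent of $V_5$ and of $\dim(X)$.

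First I would record the Lefschetz decomposition of $\Db\bigl(\Grass(2,V_5)\bigr)$ with respect to the Pl\"ucker polarisation, and invoke HPD for $\Grass(2,V_5)\subset \PP(\wedge^2 V_5)$. The fundamental theorem of HPD then relates, for a linear subspace $L$ and its orthogonal $L^\perp$, the primitive components of the derived categories of the corresponding linear sections, via explicit Fourier--Mukai functors.

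Next I would incorporate the cone and the quadric, which is the technical heart of \cite{KuznetsovPerry-CatCone}. The passage from $\Grass(2,V_5)$ to the affine cone $\CGr(2,V_5)$ is handled by the categorical cone construction, which produces from a Lefschetz category over $\PP(\wedge^2 V_5)$ a Lefschetz category over the cone $\PP(\CC\oplus \wedge^2 V_5)$, compatibly with HPD. The quadric section is then treated by quadratic HPD: intersecting with~$Q$ corresponds on the categorical side to passing to an associated Clifford/double-cover category, and the space $\wedge^3 V_6$ together with the Lagrangian $A$ arises exactly as the index data for this construction. The outcome is a Fourier--Mukai equivalence $\Ku(X)\simeq \mathcal{C}(V_6,A)$ with a category that depends only on the pair $(V_6,A)$. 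That every functor in sight carries a Fourier--Mukai kernel follows because HPD equivalences, categorical-cone functors, and the Clifford functors are each given by explicit kernels, and compositions of Fourier--Mukai functors between admissible subcategories remain Fourier--Mukai.

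Finally I would read off the two cases. For generalised partners the isomorphism $V_6(X)\isomarrow V_6(X')$ carries $A(X)$ to $A(X')$, so $\mathcal{C}\bigl(V_6(X),A(X)\bigr)\simeq \mathcal{C}\bigl(V_6(X'),A(X')\bigr)$ and hence $\Ku(X)\simeq \Ku(X')$ as a composition of Fourier--Mukai functors. For generalised duals the isomorphism $V_6(X)\isomarrow V_6(X')^\vee$ carries $A(X)$ to $A(X')^\perp$; here one invokes the HPD self-duality $\mathcal{C}(V_6,A)\simeq \mathcal{C}(V_6^\vee,A^\perp)$, which reflects that HPD exchanges a linear section with the orthogonal section of the dual, and composing again produces a Fourier--Mukai equivalence. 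The main obstacle is precisely the construction and formal properties of the categorical cone together with the compatibility of HPD with cones and with quadric sections: one must build the relevant Lefschetz categories, prove a cone version of the HPD theorem, control the Lefschetz centre under these operations, and verify throughout that the resulting functors are of Fourier--Mukai type. Granting these structural results, the reduction to the pair $(V_6,A)$ and the two case analyses above are essentially formal.
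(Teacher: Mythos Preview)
Your sketch is a faithful outline of the Kuznetsov--Perry argument via categorical cones and quadratic homological projective duality, and this is exactly the mechanism the paper points to: the paper does not give its own proof of this theorem but simply cites \cite{KuznetsovPerry-CatCone} for the equivalence of Kuznetsov components. So on the level of the equivalence $\Ku(X)\simeq\Ku(X')$ itself, you and the paper agree.

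There is one substantive difference worth flagging. You argue that the equivalence is of Fourier--Mukai type because each of the HPD, categorical-cone, and Clifford functors is given by an explicit kernel, and compositions of Fourier--Mukai functors remain Fourier--Mukai. The paper, by contrast, explicitly notes that the Fourier--Mukai property is \emph{not} stated in \cite{KuznetsovPerry-CatCone} and instead deduces it from \cite[Theorem~1.3]{LiPertusiZhao}. Your argument is morally right but glosses over a genuine subtlety: the functors in the HPD machinery are Fourier--Mukai between the ambient derived categories, but one must check that after restricting to the admissible subcategories and composing with the projection and inclusion functors, the resulting equivalence $\Ku(X)\to\Ku(X')$ is still induced by a kernel on $X\times X'$ in the precise sense required (namely that $i'\circ\Psi\circ i^*$ equals some $\Phi_{\scrE}$). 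This is not entirely automatic, and the paper's choice to invoke an external reference rather than assert it directly reflects that. If you want to keep your direct argument, you should be explicit about why the composition $\Db(X)\xrightarrow{i^*}\Ku(X)\xrightarrow{\Psi}\Ku(X')\xrightarrow{i'}\Db(X')$ is represented by a single object of $\Db(X\times X')$.
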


The assertion that the equivalence is of Fourier--Mukai type means that there exists an object~$\scrE$ in $\Db(X \times X^\prime)$ such that the composition
\begin{equation}
\label{eq:PsiFM}
\Db(X) \xrightarrow{~i^*~} \Ku(X) \xrightarrow{~\Psi~} \Ku(X') \xrightarrow{~i^\prime~} \Db(X^\prime)
\end{equation}
is the Fourier--Mukai transformation~$\Phi_\scrE$ defined by~$\scrE$. This is not explicitly stated in~\cite{KuznetsovPerry-CatCone}, but it follows from \cite[Theorem~1.3]{LiPertusiZhao}.

\subsection{}
\label{subsec:A}
For the proof of Theorem~\ref{thm:MotivePartnerChar0}, we shall first work over the complex numbers. In Section~\ref{subsec:KbarGeneral} we shall explain how to deduce the result over a field $K=\Kbar$ of characteristic~$0$. The overall strategy is close to \cite{Huybrechts-MotiveK3} and uses some arguments in \cite{FuVial-K3, FuVial-Cubic4fold}.

If $Z$ is a smooth complex projective variety, an admissible subcategory of its bounded derived category of coherent sheaves $\mathcal{C}\subset \Db(Z)$ may be viewed as a non-commutative smooth proper scheme. Let $K_0(\mathcal{C})$ be the Grothendieck group of~$\mathcal{C}$. By the work of Blanc~\cite{Blanc-TopK}, we can also consider the topological K-theory $K_0^{\topo}(\mathcal{C})$ of~$\mathcal{C}$, which for $\mathcal{C} = \Db(Z)$ agrees with topological K-theory of~$Z$ (see \cite[Proposition~4.32]{Blanc-TopK}). The functors $K_0$ and~$K_0^{\topo}$ are both additive invariants, in the sense that they transform a semi-orthogonal decomposition into a direct sum decomposition. Sending an algebraic vector bundle to its underlying complex vector bundle defines a natural transformation $K_0\to K_0^{\topo}$.

We define a new invariant, which might be called the ``topologically trivial Grothendieck group'':
\begin{equation}
\A(\mathcal{C}):= \ker\left(K_0(\mathcal{C})_\QQ \to K_0^{\topo}(\mathcal{C})_\QQ\right).
\end{equation}
For a smooth complex projective variety~$Z$, we simply write~$\A(Z)$ for $\A\left(\Db(Z)\right)$. Naturally, $\A$~being the kernel of a morphism between additive functors, it is again an additive invariant. We make two basic observations:
\begin{itemize}
\item It is easy to check that $\A(\pt) = 0$. Therefore, if there is a semi-orthogonal decomposition 
\[
\Db(Z) = \left\langle \Ku(Z), E_1, \cdots, E_m \right\rangle,
\]
with $\langle E_1, \cdots, E_m\rangle$ an exceptional collection (in particular, $\langle E_i\rangle \cong \Db(\pt)$ for every~$i$), the inclusion functor $i\colon \Ku(Z)\hookrightarrow \Db(Z)$ induces an isomorphism 
\begin{equation}
\label{eqn:AofKu}
\A\left(\Ku(Z)\right)\cong \A(Z),
\end{equation}
with inverse induced by the (left or right) adjoint functor of~$i$.
 	
\item For any smooth complex projective variety~$Z$, we have a commutative diagram 
\[
\begin{tikzcd}
K_0(Z)_\QQ   \ar[r] \ar[d,"\wr", "v"'] &  K_0^{\topo}(Z)_\QQ \ar[d,"v^{\topo}","\wr"']\\
\CH^*(Z)_\QQ \ar[r, "\class"]  & H^{2*}(Z,\QQ)\rlap{,}
\end{tikzcd}
\]
where the bottom arrow is the cycle class map and the vertical isomorphisms are given by ``Mukai vector maps'' that send a class~$e$ to $\ch(e) \cdot \sqrt{\td(X)\, }$. (The right vertical map is an isomorphism by a classical result of Atiyah and Hirzebruch; see for instance \cite[Section~38.4]{FomenkoFuchs}.) {}From the above diagram, we obtain that the Mukai vector map induces an isomorphism between $\A(Z)$ and the homologically trivial part of the rational Chow group:
\begin{equation}
\label{eqn:AisomCHhom}
v\colon \A(Z) \isomlongarrow \CH^*(Z)_{\hom,\QQ}.
\end{equation} 
	
\end{itemize}
\bigskip

We can now give the proof of Theorem~\ref{thm:MotivePartnerChar0} for $X$ and~$X'$ defined over~$\CC$.

\begin{proof}[Proof of Theorem~\ref{thm:MotivePartnerChar0} over~$\CC$]
In order to avoid case distinctions, let us make the convention that when $n$ is odd, $\frh^n_{\tr}(X) = \frh^n(X)$. 

By Theorem~\ref{thm:DerCatPartnerChar0}, there exists an object $\scrE \in \Db(X\times X')$ such that the composition
\[
\Ku(X) \xhookrightarrow{~i~} \Db(X) \xrightarrow{~\Phi_{\scrE}~} \Db(X')\xrightarrow{~{i'}^*~} \Ku(X')
\]
is an equivalence of smooth proper dg-categories. (This is the same as~\eqref{eq:PsiFM} because $i^* \circ i$ is the identity on~$\Ku(X)$, and likewise for~$X^\prime$.) Applying the invariant~$\A$, we find that the composition
\begin{equation}
\label{eqn:ApplyingA}
\A\left(\Ku(X)\right)\tto \A(X) \xrightarrow{~[\scrE]_*~}  \A(X') \tto \A\left(\Ku(X')\right)
\end{equation}
is an isomorphism. However, since the Kuznetsov component of a GM variety is defined as the right orthogonal of an exceptional collection (see~\eqref{SODforGM}), \eqref{eqn:AofKu} implies that the first and  last maps in~\eqref{eqn:ApplyingA} are isomorphisms. Therefore, the middle map of \eqref{eqn:ApplyingA} is an isomorphism between $\A(X)$ and~$\A(X')$.
	
Now consider the Grothendieck--Riemann--Roch diagram
\begin{equation}
\begin{tikzcd}
\A(X) \ar[r, "{[\scrE]_*}", "\sim"'] \ar[d, "v","\wr"']& \A(X') \ar[d, "v","\wr"']\\
\CH^*(X)_{\hom, \QQ} \ar[r, "v(\scrE)_*"]& \CH^*(X')_{\hom, \QQ}\rlap{,}
\end{tikzcd}
\end{equation}
where the top map is an isomorphism as explained above and the vertical isomorphisms are the ones obtained from~\eqref{eqn:AisomCHhom}. It follows that the bottom arrow is an isomorphism. Furthermore, by our computations of Chow groups in Section~\ref{sec:ChowGroupGM}, we have that
\[
\CH(X)_{\hom, \QQ} = \CH^{\lfloor \frac{n+2}{2}\rfloor}(X)_{\hom, \QQ} = \CH\left(\frh^n_{\tr}(X)\right)_\QQ,
\]
and likewise for $X^\prime$. Now consider the morphism $v_m(\scrE) \colon \frh(X) \to \frh(X^\prime)\left(\frac{n'-n}{2}\right)$ in~$\CHM(\CC)$ defined by the component of $v(\scrE)$ in degree $m = \frac{n+n'}{2}$. It follows from the above that the morphism
\[
\pi^{n'}_{X',\tr} \circ v_m(\scrE) \circ \pi^n_{X,\tr} \colon \frh^n_{\tr}(X) \tto \frh^{n'}_{\tr}(X')\left(\tfrac{n'-n}{2}\right)
\]
induces an isomorphism on Chow groups. By Lemma~\ref{lem:ManinPrinc}, this implies that this morphism is an isomorphism.

In case $n$ and~$n^\prime$ are even, it remains to pass from $\frh^n_{\tr}(X)$ and~$\frh^{n'}_{X',\tr}$ to $\frh^n(X)$ and~$\frh^{n'}_{X'}$. By construction, there exist integers $r$ and~$r^\prime$ such that
\[
\frh^n(X) = \frh^n_{\tr}(X) \oplus \unitmot\left(-\tfrac{n}{2}\right)^{\oplus r},\qquad
\frh^{n'}(X') = \frh^{n'}_{\tr}(X') \oplus \unitmot\left(-\tfrac{n'}{2}\right)^{\oplus r'}.
\]
Taking Hodge realisations and using that the middle Betti numbers of~$X$ and~$X'$ are the same, it follows that $r=r^\prime$, and therefore $\frh^n(X)\cong \frh^{n'}(X')\left(\tfrac{n'-n}{2}\right)$.
\end{proof}

\subsection{}
\label{subsec:KbarGeneral}
To finish, we explain how to obtain Theorem~\ref{thm:MotivePartnerChar0} over an arbitrary algebraically closed field~$K$ of characteristic~$0$. Let $X$ and~$X'$ be generalised partners or duals over~$K$, of dimensions $n$, $n' \in \{3,4,5,6\}$ of the same parity. There exists a subfield $K_0 \subset K$ which is finitely generated over~$\QQ$ such that $X$ and~$X'$ both have models, say $X_0$ and~$X_0^\prime$, over~$K_0$, and such that moreover the Chow--K\"unneth projectors $\pi^n_X$ and~$\pi^{n'}_{X'}$ that cut out the submotives $\frh^n(X)$ and~$\frh^{n'}(X')$ are defined over~$K_0$. Clearly, Theorem~\ref{thm:MotivePartnerChar0} for $X_0$ and~$X_0^\prime$ over an algebraic closure of~$K_0$ implies the result over~$K$. 

Choose an embedding $K_0 \hookrightarrow \CC$, and let $\Kbar_0$ be the algebraic closure of~$K_0$ inside~$\CC$. By the now proven Theorem~\ref{thm:MotivePartnerChar0} over~$\CC$, there exists an isomorphism $\alpha \colon \frh^n(X_{0,\CC})\cong \frh^{n'}(X^\prime_{0,\CC})\left(\frac{n'-n}{2}\right)$ in~$\CHM(\CC)$. There exists a finitely generated field extension $\Kbar_0 \subset K_1$ inside~$\CC$ such that $\alpha$ and~$\alpha^{-1}$ are defined over~$K_1$. Concretely, this means there exist cycle classes $Z_1,\ldots,Z_m \in \CH\left((X_0 \times X_0^\prime)_{K_1}\right)$ and rational numbers $a_i$ and~$b_i$ such that 
\[
\alpha = \pi^{n'}_{X'_0} \circ \left(\sum a_i \cdot Z_i\right) \circ \pi^n_{X_0},\qquad
\alpha^{-1} = \pi^n_{X_0} \circ \left(\sum b_i\cdot {}^{\mathsf{t}}Z_i\right) \circ \pi^{n'}_{X'_0}.
\]

The field~$K_1$ is the function field of a variety~$S$ over~$\Kbar_0$. Choose a point $s \in S(\Kbar_0)$. By specialisation from the generic point of~$S$ to~$s$, we obtain cycle classes $Z_{1,s},\ldots,Z_{m,s} \in \CH\left((X_0 \times X_0^\prime)_{\Kbar_0}\right)$. Then 
\[
\pi^{n'}_{X'_0} \circ \left(\sum a_i \cdot Z_{i,s}\right) \circ \pi^n_{X_0}
\quad\text{and}\quad
\pi^n_{X_0} \circ \left(\sum b_i\cdot {}^{\mathsf{t}}Z_{i,s}\right) \circ \pi^{n'}_{X'_0}
\]
define mutually inverse morphisms $\frh^n(X_0)\rightleftarrows \frh^{n'}(X^\prime_0)\left(\frac{n'-n}{2}\right)$ in $\CHM(\Kbar_0)$, and this gives what we want.


\newcommand{\etalchar}[1]{$^{#1}$}


\begin{thebibliography}{KMM92+++}

\bibitem[And96]{Andre-ShafTate}
Y.~Andr\'e,
\emph{On the Shafarevich and Tate conjectures for hyper-K\"ahler varieties}, 
Math.\ Ann.\ \textbf{305} (1996), no.~2, 205--248.

\bibitem[And05]{Andre-MotDimFin}
\bysame,
\emph{Motifs de dimension finie (d'apr\`es S.-I.~Kimura, P.~O'Sullivan,\ldots)}, in: \emph{S\'eminaire Bourbaki. Vol.~2003/2004: Expos\'es 924--937}, Exp.~No.~929, pp.~115--145, Ast\'erisque \textbf{299} (2005).

\bibitem[Bla16]{Blanc-TopK}
A.~Blanc,
\emph{Topological K-theory of complex noncommutative spaces}, 
Compos.\ Math.\ \textbf{152} (2016), no.~3, 489--555.

\bibitem[BO74]{Bloch-Ogus}
S.~Bloch and A.~Ogus,
\emph{Gersten's conjecture and the homology of schemes},
Ann.\ Sci.\ \'Ecole Norm.\ Sup.~(4) \textbf{7} (1974), 181--201. 

\bibitem[BS83]{BlochSrinivas}
S.~Bloch and V.~Srinivas,
\emph{Remarks on correspondences and algebraic cycles},
Amer.\ J.~Math.\ \textbf{105} (1983), no.~5, 1235--1253.

\bibitem[BL24]{BolognesiLaterveer-GM6}
M.~Bolognesi and R.~Laterveer,
\emph{Some motivic properties of Gushel--Mukai sixfolds},
Math. \ Nachr.\ \textbf{297} (2024), no.~1, 246--265.

\bibitem[Cam92]{Campana}
F.~Campana,
\emph{Connexit\'e rationnelle des vari\'et\'es de Fano},
Ann.\ Sci.\ \'Ecole Norm.\ Sup.~(4) \textbf{25} (1992), no.~5, 539--545. 

\bibitem[CTV12]{CTVoisin}
J.-L.~Colliot-Th\'el\`ene and C.~Voisin,
\emph{Cohomologie non ramifi\'ee et conjecture de Hodge enti\`ere},
Duke Math.~J.\ \textbf{161} (2012), no.~5, 735--801.

\bibitem[Deb20]{Debarre-GM}
O.~Debarre,
\emph{Gushel--Mukai varieties},
preprint  \arXiv{2001.03485} (2020).

\bibitem[DK18]{DK-GMClassification}
O. Debarre and A. Kuznetsov,
\emph{Gushel--Mukai varieties: classification and birationalities},
Algebr.\ Geom.\ \textbf{5} (2018), no.~1, 15--76.

\bibitem[DK19]{DK-Kyoto}
\bysame,   
\emph{Gushel--Mukai varieties: linear spaces and periods},
Kyoto J.~Math.\ \textbf{59} (2019), no.~4, 897--953.

\bibitem[DK20a]{DK-DoubleCov}
\bysame,
\emph{Double covers of quadratic degeneracy and Lagrangian intersection loci},
Math.\ Ann.\ \textbf{378} (2020), no.~3--4, 1435--1469.

\bibitem[DK20b]{DK-GMJacobian}
\bysame,
\emph{Gushel--Mukai varieties: intermediate Jacobians}, 
\'Epijournal G\'eom. Alg\'ebrique \textbf{4} (2020), Art.~19.

\bibitem[DM91]{DeningerMurre}
C.~Deninger and J.~Murre,
\emph{Motivic decomposition of abelian schemes and the Fourier transform},
J.\ reine angew.\ Math.\ \textbf{422} (1991), 201--219.

\bibitem[DdS\etalchar{+}99]{DixonEtAl}
J.~Dixon, M.~du Sautoy, A.~Mann, and D.~Segal,
\emph{Analytic pro-$p$ groups}, 2nd ed., 
Cambridge Stud.\ Adv.\ Math., vol.~61. Cambridge Univ.\ Press, Cambridge, 1999.

\bibitem[FF16]{FomenkoFuchs}
A.~Fomenko and D.~Fuchs,
\emph{Homotopical topology}, 2nd ed., 
Grad.\ Texts in Math., vol.~273,  
Springer, 2016.

\bibitem[FLV21]{FLV-MCK}
L.~Fu, R.~Laterveer, and C.~Vial,
\emph{Multiplicative Chow--K\"unneth decompositions and varieties of cohomological K3 type},
Ann.\ Mat.\ Pura Appl.~(4) \textbf{200} (2021), no.~5, 2085--2126.

\bibitem[FM22]{FuMoonen-TCGMV}
L.~Fu and B.~Moonen,
\emph{The Tate Conjecture for even dimensional Gushel--Mikai varieties in characteristic $p\geq 5$},
to appear in the Journal of Alg. Geom., preprint \arXiv{2207.01122} (2022).

\bibitem[FT19]{FuTian-CubicFivefold}
L.~Fu and Z.~Tian,
\emph{$2$-Cycles sur les hypersurfaces cubiques de dimension~$5$},
Math.\ Z.\ \textbf{293} (2019), no.~1--2, 661--676.

\bibitem[FV21]{FuVial-K3}
L.~Fu and C.~Vial,
\emph{A motivic global Torelli theorem for isogenous K3 surfaces},
Adv.\ Math.\ \textbf{383} (2021), Paper No.~107674.

\bibitem[FV23]{FuVial-Cubic4fold}
\bysame,
\emph{Cubic fourfolds, Kuznetsov components and Chow motives},
Doc.\ Math.\ \textbf{28} (2023), no.~4, 827--856.

\bibitem[GHS03]{GraberHarrisStarr}
T.~Graber, J.~Harris, and J.~Starr,
\emph{Families of rationally connected varieties},
J.~Amer.\ Math.\ Soc.\ \textbf{16} (2003), no.~1, 57--67.

\bibitem[Gro68]{Brauer3}
A.~Grothendieck,
\emph{Le groupe de Brauer.~III. Exemples et compl\'ements},
in: \emph{Dix expos\'es sur la cohomologie des sch\'emas}, pp.~88-188, 
Adv.\ Stud.\ Pure Math., vol.~3, North-Holland Publishing Co., Amsterdam, 1968.

\bibitem[Gro69]{Grothendieck}
\bysame,
\emph{Hodge's general conjecture is false for trivial reasons},
Topology \textbf{8} (1969), 299--303.

\bibitem[Huy18]{Huybrechts-MotiveK3}
D.~Huybrechts,
\emph{Motives of derived equivalent K3 surfaces},
Abh.\ Math.\ Semin.\ Univ.\ Hambg.\ \textbf{88} (2018), no.~1, 201--207.

\bibitem[Jan92]{Jannsen}
U.~Jannsen,
\emph{Motives, numerical equivalence, and semi-simplicity}, 
Invent.\ Math.\ \textbf{107} (1992), no.~3, 447--452.

\bibitem[KMP07]{KahnMurPedr}
B.~Kahn, J.~Murre, and C.~Pedrini,
\emph{On the transcendental part of the motive of a surface}, 
in: \emph{Algebraic cycles and motives. Vol.~2}, pp.~143--202, 
London Math.\ Soc.\ Lecture Note Ser., vol.~344, Cambridge Univ. Press, Cambridge, 2007. 

\bibitem[Kim05]{Kimura}
S.-I.~Kimura,
\emph{Chow groups are finite dimensional, in some sense},
Math.\ Ann.\ \textbf{331} (2005), no.~1, 173--201.

\bibitem[Kol96]{KollarBookRationCurve}
J.~Koll\'ar, 
\emph{Rational curves on algebraic varieties},
Ergeb.\ Math.\ Grenzgeb.~(3), vol.~32, Springer-Verlag, Berlin, 1996. 

\bibitem[KMM92]{KMM}
J.~Koll\'ar, Y.~Miyaoka, and S.~Mori,
\emph{Rational connectedness and boundedness of Fano manifolds},
J.~Differential Geom.\ \textbf{36} (1992), no.~3, 765--779. 

\bibitem[KP18]{KP16}
A.~Kuznetsov and A.~Perry,
\emph{Derived categories of Gushel-Mukai varieties}, 
Compos.\ Math.\ \textbf{154} (2018), no.~7, 1362--1406.

\bibitem[KP23]{KuznetsovPerry-CatCone}
\bysame,
\emph{Categorical cones and quadratic homological projective duality}, 
Ann.\ Sci.\ \'{E}c.\ Norm.\ Sup\'{e}r.~(4) \textbf{56} (2023), no.~1, 1--57.

\bibitem[Lat98]{Laterveer-SmallChow}
R.~Laterveer,
\emph{Algebraic varieties with small Chow groups},
J.~Math.\ Kyoto Univ.\ \textbf{38} (1998), no.~4, 673--694.

\bibitem[Lat21]{Laterveer-GM5}
\bysame,
\emph{Algebraic cycles and Gushel--Mukai fivefolds},
J.~Pure Appl.\ Algebra \textbf{225} (2021), no.~5, Paper no.~106582.

\bibitem[LPZ23]{LiPertusiZhao}
C.~Li, L.~Pertusi, and X.~Zhao,
\emph{Derived categories of hearts on Kuznetsov components},
J.~London Math.\ Soc.~(2) \textbf{108} (2023), no.~6, 2146--2174.

\bibitem[Moo17]{Moonen-TCMTC}
B.~Moonen,
\emph{On the Tate and Mumford--Tate conjectures in codimension~$1$ for varieties with $h^{2,0}=1$},
Duke Math J.\ \textbf{166} (2017), no.~4, 739--799.

\bibitem[Mur85]{MurreKTheory}
J.~Murre,
\emph{Applications of algebraic $K$-theory to the theory of algebraic cycles}, 
in: \emph{Algebraic geometry} (Sitges, 1983), pp.~216--261, 
Lecture Notes in Math., vol.~1124, Springer-Verlag, Berlin, 1985.

\bibitem[Nag98]{Nagel-GHC}
J.~Nagel,
\emph{The generalized Hodge conjecture for the quadratic complex of lines in projective four-space},
Math.\ Ann.\ \textbf{312} (1998), no.~2, 387--401.

\bibitem[Per22]{Perry-IHC}
A.~Perry,
\emph{The integral Hodge conjecture for two-dimensional Calabi--Yau categories},
Compos.\ Math.\ \textbf{158} (2022), no.~2, 287--333. 

\bibitem[TZ14]{TianZong}
Z.~Tian and H.~Zong,
\emph{One-cycles on rationally connected varieties},
Compos.\ Math.\ \textbf{150} (2014), no.~3, 396--408.

\bibitem[Via13]{Vial-Projectors}
C.~Vial,
\emph{Projectors on the intermediate algebraic Jacobians},
New York J.~Math.\ \textbf{19} (2013), 793--822.

\bibitem[Via17]{Vial-Remarks}
\bysame,
\emph{Remarks on motives of abelian type},
Tohoku Math.\ J.~(2) \textbf{69} (2017), no.~2, 195--220. 

\bibitem[Voe03]{Voevodsky-IHES}
V.~Voevodsky,
\emph{Motivic cohomology with $\mathbb{Z}/2$-coefficients},
Publ.\ Math.\ Inst.\ Hautes \'Etudes Sci.\ \textbf{98} (2003), 59--104.

\bibitem[Voe11]{Voevodsky-Annals}
\bysame,
\emph{Motivic cohomology with $\mathbb{Z}/l$-coefficients}, 
Ann.\ of Math.~(2) \textbf{174} (2011), no.~1, 401--438.

\bibitem[Voi02]{Voisin-Book1}
C.~Voisin,
\emph{Hodge theory and complex algebraic geometry, Vol.~I},
Cambridge Stud.\ Adv.\ Math., vol.~76, 
Cambridge Univ.\ Press, Cambridge, 2002.

\bibitem[Voi12]{Voisin-H4nr}
\bysame,
\emph{Degree $4$ unramified cohomology with finite coefficients and torsion codimension~$3$ cycles}, 
in: \emph{Geometry and Arithmetic}, pp.~347--368, 
EMS Ser.\ Congr. Rep., Eur.\ Math.\ Soc., Z\"urich, 2012.

\bibitem[Voi14]{Voisin-BookDecompDiagonal}
\bysame,
\emph{Chow rings, decomposition of the diagonal, and the topology of families},
Ann.\ of Math.\ Stud, vol.~187, 
Princeton Univ.\ Press, Princeton, NJ, 2014.

\bibitem[Zho23]{ZhouLin}
L.~Zhou,
\emph{Chow groups of Gushel--Mukai fivefolds},
Adv.\ Geom.\ \textbf{23} (2023), no.~2, 295--303.

\end{thebibliography}
\end{document}